\newcommand{\T}{\mathbb{T}} \newcommand{\D}{\mathbb{D}}
\newcommand{\C}{\mathbb{C}} 
 \newcommand{\N}{\mathbb{N}}
\newcommand{\B}{\mathbb{B}}
\newcommand{\A}{A}
\newcommand{\M}{\mathfrak{M}}
\newcommand{\z}{\mathbf{z}}
\newcommand{\Cont}{\mathcal{C}}
\newcommand{\Cauchy}[2]{\mathcal{C}_{#2}(#1)}
\newcommand{\CauchyMod}[3]{\mathcal{C}_{#3}^{#2}(#1)}
\DeclareMathOperator{\supp}{supp}
\newcommand{\V}{\mathcal{V}}
\newcommand{\SA}{\mathcal{SA}}
\newcommand{\SAa}{\mathcal{SA}_A}
\newtheorem{theorem}{Theorem}[section]
\newtheorem{lemma}[theorem]{Lemma}
\newtheorem{corollary}[theorem]{Corollary}
\newtheorem{proposition}[theorem]{Proposition}
\begin{document}

\title[Analytic algebras and test collections] {Traces of analytic
  uniform algebras on subvarieties and test collections}

\author{M. A. Dritschel, D. Est\'evez and D. Yakubovich}

\classno{Primary 46J15; Secondary 30H50, 46J30}

\extraline{The first author's visits to Madrid were partially
  supported by project MTM2011-28149-C02-1, DGI-FEDER, of the Ministry
  of Economy and Competitiveness, Spain (MEC).  The second author was
  supported by the MEC project MTM2015-66157-C2-1-P and the
  Mathematics Department of the Aut\'onoma University of Madrid (UAM).
  His research stay in Newcastle was supported by a grant from UAM.
  The third author was supported by project MTM2015-66157-C2-1-P and
  by ICMAT Severo Ochoa project SEV-2015-0554 of the MEC.  }

\maketitle

\begin{abstract}
  Given a complex domain $\Omega$ and analytic functions
  $\varphi_1,\ldots,\varphi_n : \Omega \to \D$, we find geometric
  conditions allowing us to conclude that $H^\infty(\Omega)$ is
  generated by functions of the form $g \circ \varphi_k$, $g \in
  H^\infty(\D)$.  This is then applied to the construction of an
  extension of bounded holomorphic functions on an analytic
  one-dimensional complex subvariety of the polydisk $\D^n$ to
  functions in the Schur-Agler algebra of $\D^n$, with an estimate on
  the norm of the extension.  Our proofs use an extension of the
  technique of separation of singularities by Havin, Nersessian and
  Ortega-Cerd\'a.
\end{abstract}

\section{Introduction}

\subsection{The statement of main results}

This paper is devoted to the problem of extending a bounded analytic
function from a subvariety of the polydisk $\D^n$ to a bounded
analytic function on the polydisk, as well as a related problem of the
generation of algebras.  Our main motivations come from operator
theory and concern some tests for $K$-spectrality and complete
$K$-spectrality of a Hilbert space linear operator.  These will be
treated in a forthcoming article~\cite{Article2}.

\nocite{*}

Let $\Omega \subset \C$ be a domain and $\Phi : \Omega \to \D^n$ be an
analytic function.  Its image $\V = \Phi(\Omega)$ is an analytic
variety inside $\D^n$ (which may have singular points).  We say that a
complex function $f$ defined on $\V$ is analytic if, for every point
$p \in \V$, there is a neighborhood $U$ of $p$ in $\C^n$ and an
analytic function $F$ on $U$ such that $f|(\V \cap U) = F|(\V \cap
U)$.  We define $H^\infty(\V)$ to be the Banach algebra of bounded
analytic functions on $\V$, equipped with the supremum norm.

A fundamental question is whether it is possible to extend a function
in $H^\infty(\V)$ to a function in $H^\infty(\D^n)$, the Banach
algebra of bounded analytic functions on $\D^n$, also equipped with
the supremum norm.  Since the restriction map $H^\infty(\D^n) \to
H^\infty(\V)$ is a contractive homomorphism, this question asks
whether the image of this homomorphism, $H^\infty(\D^n) | \V$, is all
of $H^\infty(\V)$.

Denote by $\Phi^*$ the pullback by $\Phi$; that is, the map $\Phi^* :
H^\infty(\D^n) \to H^\infty(\Omega)$ defined by $\Phi^*(f) = f \circ
\Phi$.  If this map is onto, i.e, if $\Phi^* H^\infty(\D^n) =
H^\infty(\Omega)$, then every function in $H^\infty(\V)$ can be
extended to a function in $H^\infty(\D^n)$, because if $f \in
H^\infty(\V)$, then $f \circ \Phi \in H^\infty(\Omega)$.  When
$\Phi^*$ is onto, we can find an $F \in H^\infty(\D^n)$ such that $f
\circ \Phi = \Phi^* F = F\circ\Phi$.  This equality implies that $F|\V
= f$, so $F$ extends $f$ to $H^\infty(\D^n)$.

We show that one has $\Phi^* H^\infty(\D^n) = H^\infty(\Omega)$ for a
class of domains $\Omega$ and functions $\Phi$ satisfying certain
geometric conditions, and such that $\Phi$ is injective and $\Phi'$
does not vanish (in this case, $\V$ is an analytic variety).  If the
hypotheses are weaker (in particular, if $\Phi$ is injective and
$\Phi'\ne 0$ only outside a finite subset of $\Omega$), we show that
$\Phi^* H^\infty(\D^n)$ is a finite codimensional subalgebra of
$H^\infty(\Omega)$.  It is easy to see that one cannot get the whole
$H^\infty(\Omega)$ algebra in this case.  As will be seen however,
even under these weaker assumptions, every function in $H^\infty(\V)$
can be extended to a function in $H^\infty(\D^n)$.

We also consider other algebras of functions on $\D^n$ besides
$H^\infty(\D^n)$.  One of these algebras is $\SA(\D^n)$, the Agler
algebra of $\D^n$.  It is the Banach algebra of functions analytic on
$\D^n$ such that the norm
\begin{equation*}
  \|f\|_{\SA(\D^n)} \overset{\text{def}}{=} \sup_{\substack{\|T_j\|
      \leq 1\\\sigma(T_j)\subset \D}} \|f(T_1,\ldots,T_n)\|
\end{equation*}
is finite.  Here the supremum is taken over all tuples
$(T_1,\ldots,T_n)$ of commuting contractions on a Hilbert space such
that the spectra $\sigma(T_j)$ are contained in $\D$
($f(T_1,\ldots,T_n)$ is well defined for such tuples).  Clearly,
$\SA(\D^n)$ is a subset of $H^\infty(\D^n)$ and
$\|f\|_{H^\infty(\D^n)}\leq\|f\|_{\SA(\D^n)}$.  For $n = 1,2$, we have
the equality $\SA(\D^n) = H^\infty(\D^n)$, and the norms coincide.
However, for $n\geq 3$, the norms do not coincide. Also, if $n \geq
3$, it is currently unknown whether or not $\SA(\D^n)$ is a proper
subset of $H^\infty(\D^n)$.  The unit ball of the Agler algebra is
known as the Schur-Agler class. It turns out that it is the proper
analog of the unit ball in $H^\infty(\D)$ (the so called Schur class)
when studying the Pick interpolation problem in $\D^n$.  The
Schur-Agler class also has important applications in operator theory
and function theory.

We can ask whether every function in $H^\infty(\V)$ can be extended to
a function in $\SA(\D^n)$ and whether $\Phi^* \SA(\D^n) =
H^\infty(\Omega)$.  We show that for the class of functions $\Phi$
considered in this article, the answer to the first question is
affirmative, and the answer to the second question is also affirmative
if $\Phi$ is injective and $\Phi'$ does not vanish.

Another interesting algebra is $H^\infty(\mathcal{K}_\Psi)$.  This
algebra, extensively studied in~\cite{DritschelMcCullough}, is
associated with a collection of test functions $\Psi$.  It turns out
that if $\Phi = (\varphi_1,\ldots,\varphi_n): \Omega\to\D^n$ is
injective, then $\{\varphi_1,\ldots,\varphi_n\}$ is a collection of
test functions, which we is also denoted by $\Phi$, and
$H^\infty(\mathcal{K}_\Phi) = \Phi^*\SA(\D^n)$.  Therefore, the
question of whether $H^\infty(\mathcal{K}_\Phi) = H^\infty(\Omega)$,
is a reformulation of the question from the previous paragraph.

If $\Omega$ is a nice domain (say with piecewise smooth boundary), and
$\Phi$ extends by continuity to $\overline{\Omega}$, then we can also
consider the algebra $A(\overline{\Omega})$ of functions analytic in
$\Omega$ and continuous in $\overline{\Omega}$ instead of
$H^\infty(\Omega)$.  The set $\overline{\V} = \Phi(\overline{\Omega})$
is a bordered analytic variety, and we can consider the algebra
$A(\overline{\V})$ of functions analytic in $\V$ and continuous in
$\overline{\V}$.  The extension problem can also be formulated for
these algebras.  One can ask whether every function in
$A(\overline{\V})$ extends to a function in $A(\overline{\D}^n)$, the
algebra of functions analytic in $\D^n$ and continuous in
$\overline{\D}^n$, or to $\SAa(\D^n)\overset{\text{def}}{=} \SA(\D^n)
\cap A(\overline{{\D}^n})$.  Our methods apply to this problem, and so
many of our results have two versions: one for algebras of type
$H^\infty$, another for algebras of functions continuous up to the
boundary.

Another important algebra for us is $\mathcal{H}_\Phi$, the (not
necessarily closed) subalgebra of $H^\infty(\Omega)$ generated by
functions of the form $f\circ \varphi_k$, with $f \in H^\infty(\D)$,
and $k = 1,\ldots,n$:
\begin{equation*}
  \mathcal{H}_\Phi = \bigg\{ \sum_{j=1}^l \prod_{k=1}^n
  f_{j,k}(\varphi_k(z)) : l \in \N, f_{j,k} \in H^\infty(\D) \bigg\}.
\end{equation*}
We have the following algebra inclusions:
\begin{equation}
  \label{eq:inclusions}
  \mathcal{H}_\Phi \subset \Phi^* \SA(\D^n) \subset \Phi^*
  H^\infty(\D^n)
  \subset \Phi^* H^\infty(\V) \subset H^\infty(\Omega).
\end{equation}
The first inclusion follows from the observation that any function on
$\D^n$ of the form $f(z_k)$, with $f \in H^\infty(\D)$, belongs to
$\SA(\D^n)$ by the von~Neumann inequality, as do sums of products of
such functions since $\SA(\D^n)$ is an algebra.  The inclusion $\Phi^*
H^\infty(\D^n) \subset \Phi^* H^\infty(\V)$ holds since if $F \in
H^\infty(\D^n)$, then $F|\V \in H^\infty(\V)$ and $\Phi^* F =
\Phi^*(F|\V)$.

We define $\mathcal{A}_\Phi$ to be the (not necessarily closed)
subalgebra of $A(\overline{\Omega})$ generated by functions of the
form $f \circ \varphi_k$ with $f \in A(\overline{\D})$ and $k =
1,\ldots,n$:
\begin{equation*}
  \mathcal{A}_\Phi = \left\{ \sum_{j=1}^l \prod_{k=1}^n
    f_{j,k}(\varphi_k(z)) : l \in \N, f_{j,k} \in A(\overline{\D})
  \right\}.
\end{equation*}
We likewise have the inclusions
\begin{equation}
  \label{eq:inclusions2}
  \mathcal{A}_\Phi \subset \Phi^* \SAa(\D^n) \subset \Phi^*
  A(\overline{\D}^n)
  \subset \Phi^* A(\overline{\V}) \subset A(\overline{\Omega}).
\end{equation}

Some useful terminology: by an open circular sector in $\C$ with
vertex on a point $z_0$, we mean a set of the form
\begin{equation*}
  \{z \in \C \setminus \{z_0\} : |z - z_0| < r,\ \alpha < \arg(z-z_0) <
  \beta\},
\end{equation*}
where $r > 0$, and $\alpha < \beta < \alpha + 2\pi$.  A closed
analytic arc is understood as the image of the interval $[0,1]$ by an
analytic function, defined and univalent on an open subset of $\C$
containing $[0,1]$.  We denote by $\D_\varepsilon(z_0)$ the open disk
centered at $z_0$ with radius $\varepsilon$.

Let us now introduce the class of functions $\Phi$ to be considered in
this article.

\begin{definition*}
  Let $\Omega$ be a domain whose boundary is a disjoint finite union
  of piecewise analytic Jordan curves such that the interior angles of
  the ``corners'' of $\partial\Omega$ are in $(0,\pi]$.  We say that a
  function $\Phi = (\varphi_1,\ldots,\varphi_n): \overline{\Omega} \to
  \overline{\D}^n$ is \emph{admissible} if $\varphi_k \in
  \A(\overline{\Omega})$, for $k = 1,\ldots,n$, and there is a
  collection of sets $\{J_k\}_{k=1}^n$, where each $J_k$ is a finite
  union of disjoint closed analytic subarcs of $\partial\Omega$, and a
  constant $\alpha$, $0<\alpha\le 1$, such that the following
  conditions are satisfied (see Figure~\ref{fig:0}):
  \begin{enumerate}[(a)]
  \item $\bigcup_{k=1}^n J_k = \partial\Omega$.
  \item $|\varphi_k| = 1$ in $J_k$, for $k = 1,\ldots,n$.
  \item For each $k = 1,\ldots,n$, there exists an open set $\Omega_k
    \supset \Omega$ such that the interior of $J_k$ relative to
    $\partial \Omega$ is contained in $\Omega_k$, $\varphi_k$ is
    defined in $\overline{\Omega}_k$, $\varphi_k \in
    A(\overline{\Omega}_k)$, and $\varphi_k'$ is of class H\"older
    $\alpha$ in $\Omega_k$; i.e.,
    \begin{equation*}
      |\varphi_k'(\zeta) - \varphi_k'(z)| \leq C |\zeta - z|^\alpha,
      \qquad \zeta,z \in \Omega_k.
    \end{equation*}
  \item If $z_0$ is an endpoint of one of the arcs comprising $J_k$,
    then there exists an open circular sector $S_k(z_0)$ with vertex
    on $z_0$ and such that $S_k(z_0) \subset \Omega_k$ and $J_k \cap
    \D_\varepsilon(z_0) \subset S_k(z_0) \cup \{z_0\}$, for some
    $\varepsilon > 0$.  If $z_0$ is a common endpoint of both one of
    the arcs comprising $J_k$ one of the arcs comprising $J_l$, $k
    \neq l$, then we require $(S_k(z_0) \cap S_l(z_0)) \setminus
    \overline{\Omega}$ to be nonempty.
  \item $|\varphi_k'| \geq C > 0$ in $J_k$, for $k = 1,\ldots,n$.
  \item For each $k = 1,\ldots,n$, $\varphi_k|J_k$ is injective and
    $\varphi_k(J_k) \cap \varphi_k(\partial\Omega\setminus J_k) =
    \emptyset$.
  \end{enumerate}
\end{definition*}
\begin{figure}[b]
  \begin{center}
    \includegraphics[width=6.695cm]{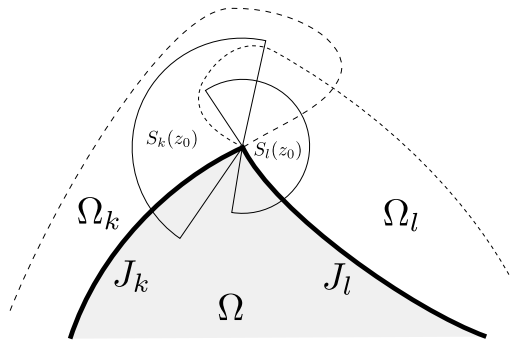}
    \caption{The sets involved in the definition of an admissible
      function}
    \label{fig:0}
  \end{center}
\end{figure}

The hypothesis that $\varphi_k'$ is of class H\"older $\alpha$ in
$\Omega_k$ can be weakened a little by instead requiring that
$\varphi_k'$ be of class H\"older $\alpha$ only in a relative
neighborhood of $J_k$ in $\overline{\Omega}_k$.

It follows from the above hypotheses that if $z_0$ is an endpoint of
one of the arcs comprising $J_k$, then $\varphi_k$ is conformal at
$z_0$.  Since $\varphi_k(\overline\Omega) \subset \overline{\D}$, and
$\varphi_k$ preserves angles, the interior angle of $\partial\Omega$
at $z_0$ must be less than or equal than $\pi$.  This justifies the
assumption on the angles at the corners of $\partial \Omega$.  This is
an important restriction on the class of domains which our methods do
not permit us to relax.

By the Schwarz reflection principle and condition (b), one can always
find sets $\Omega_k$ as in (c) by continuing $\varphi_k$ analytically
across $J_k$.  In general, these sets $\Omega_k$ do not intersect in a
way that permits the construction of the circular sectors required in
(d).  However, if all the interior angles of the corners of $\partial
\Omega$ are greater than $2\pi/3$, then it is easy to see that Schwarz
reflection produces sets $\Omega_k$ which contain such circular
sectors.

Additionally, if $\varphi_k$ is defined only in $\overline{\Omega}$,
$\varphi_k'$ is H\"older $\alpha$ on $\Omega$ and $|\varphi_k'| \geq C
> 0$ in $J_k$, then the extension of $\varphi_k$ to $\Omega_k$ by
Schwarz reflection also satisfies that $\varphi_k'$ is of class
H\"older $\alpha$.

It is easy to check from the definition of an admissible function
$\Phi$ that $\Phi'$ vanishes at most in a finite number of points in
$\overline{\Omega}$ and that there is a finite set $X \subset \Omega$
such that the restriction of $\Phi$ to $\overline{\Omega}\setminus X$
is injective (i.e., $\Phi$ identifies or ``glues'' at most a finite
number of points of $\overline{\Omega}$).

The main results of this article are the following theorems.

\begin{theorem}
  \label{A-Hinfty} If $\Phi : \overline{\Omega} \to \overline{\D}^n$
  is admissible and injective and $\Phi'$ does not vanish on $\Omega$,
  then $\mathcal{H}_\Phi = H^\infty(\Omega)$ and $\mathcal{A}_\Phi =
  A(\overline{\Omega})$.
\end{theorem}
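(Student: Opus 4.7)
My plan is to prove the nontrivial inclusion $H^\infty(\Omega) \subset \mathcal{H}_\Phi$ (the reverse inclusion already appears in~\eqref{eq:inclusions}) by decomposing an arbitrary $h \in H^\infty(\Omega)$ as $h = h_1 + \cdots + h_n$ where each summand has the form $h_k = g_k \circ \varphi_k$ for some $g_k \in H^\infty(\D)$. Running the same construction for $h \in A(\overline{\Omega})$ with $g_k \in A(\overline{\D})$ will then yield $\mathcal{A}_\Phi = A(\overline{\Omega})$. The engine is the extension of the Havin--Nersessian--Ortega-Cerd\'a separation-of-singularities technique flagged in the abstract.

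\emph{Step 1: separation of singularities.} I would fix a smooth partition of unity $\{\chi_k\}_{k=1}^n$ on $\overline{\Omega}$ with $\supp \chi_k$ contained in a relative neighborhood of $J_k$, and produce $h_k \in H^\infty(\Omega)$ with $\sum_k h_k = h$ and with each $h_k$ extending analytically across every point of $\partial\Omega \setminus J_k$. One approach is to solve $\bar\partial u_k = h\,\bar\partial \chi_k$ and set $h_k = \chi_k h - u_k$; equivalently one writes a Cauchy integral of $h$ against a contour that coincides with $J_k$ outside small corner neighborhoods and detours into $\Omega$ near each corner. At a corner $z_0 \in J_k \cap J_l$, boundedness of $h_k$ and $h_l$ depends on being able to route this contour through the external wedge $(S_k(z_0) \cap S_l(z_0)) \setminus \overline{\Omega}$ supplied by (d), and on the interior angle at $z_0$ being at most $\pi$.

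\emph{Step 2: representing each piece as a composition with $\varphi_k$.} Fix $k$. Conditions (c), (e), and (f) together imply that $\varphi_k$ is univalent on some relative neighborhood $U_k \subset \Omega_k$ of $J_k$, mapping $U_k$ biholomorphically onto an open set $V_k \subset \C$ that straddles $\T$ along $\varphi_k(J_k)$. Since $h_k$ is bounded on $\Omega$ and extends analytically across $\partial\Omega \setminus J_k$, the pullback $h_k \circ (\varphi_k|U_k)^{-1}$ is a bounded analytic function on a one-sided neighborhood of $\varphi_k(J_k)$ inside $V_k \cap \D$. I would then define
\begin{equation*}
  g_k(w) = \frac{1}{2\pi i}\int_{\Gamma_k}\frac{h_k(\zeta)\,\varphi_k'(\zeta)}{\varphi_k(\zeta) - w}\,d\zeta, \qquad w \in \D,
\end{equation*}
for a smooth contour $\Gamma_k \subset \Omega$ close to $J_k$ with $\varphi_k(\Gamma_k)$ a loop in $\D$ close to $\varphi_k(J_k)$; a contour deformation yields $g_k \circ \varphi_k = h_k$ on $\Omega$, and the H\"older bound on $\varphi_k'$ from (c), together with the lower bound on $|\varphi_k'|$ from (e), ensures that $g_k \in H^\infty(\D)$.

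The principal obstacle is Step 1: controlling the Cauchy integrals (or the $\bar\partial$-solutions) uniformly at the corners of $\partial\Omega$. Without the external-wedge condition (d) and the angle bound $\pi$, the pieces $h_k$ generically acquire a logarithmic blow-up at the corners and fail to lie in $H^\infty(\Omega)$; it is exactly the geometry in (d) that permits the contour deformation delivering bounded $h_k$, and this is the ``extension'' of Havin--Nersessian--Ortega-Cerd\'a needed here. Step 2 is then local analysis near each $J_k$. For the $\mathcal{A}_\Phi$ statement, the same scheme preserves continuity up to $\partial\Omega$: the Cauchy integral in Step 1 preserves boundary values because $h \in A(\overline{\Omega})$, and the one in Step 2 does so because $\varphi_k \in A(\overline{\Omega}_k)$ with $\varphi_k'$ of class H\"older $\alpha$.
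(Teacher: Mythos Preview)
Your Step~2 contains a genuine gap: the identity $g_k \circ \varphi_k = h_k$ on $\Omega$ cannot hold in general. The admissibility conditions control $\varphi_k$ only near $J_k$, and the hypothesis of the theorem is that $\Phi=(\varphi_1,\dots,\varphi_n)$ is injective, not that each component $\varphi_k$ is. If $\varphi_k(z_1)=\varphi_k(z_2)$ for distinct $z_1,z_2\in\Omega$, then any composition $g_k\circ\varphi_k$ takes the same value at $z_1$ and $z_2$, whereas your $h_k$---constructed in Step~1 from a partition of unity subordinate only to the arcs $J_k$, with no reference to $\varphi_k$---has no reason to. Even where $\varphi_k$ is locally injective, the ``contour deformation'' you invoke does not give the identity: $\Gamma_k$ is an arc, not a closed contour, so after the change of variable $\xi=\varphi_k(\zeta)$ your $g_k$ is the Cauchy integral of $h_k\circ(\varphi_k|_{U_k})^{-1}$ along the open arc $\varphi_k(\Gamma_k)$, which does not reproduce the integrand.

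The paper meets this obstruction head-on. Its Theorem~\ref{compact-operator} defines $F_k(f)$ by essentially the modified Cauchy integral you write, but shows only that $f\mapsto f-\sum_k F_k(f)\circ\varphi_k$ is a \emph{compact} operator on $H^\infty(\Omega)$ with range in $A(\overline{\Omega})$; the error has the weakly singular kernel $\frac{\varphi_k'(\zeta)}{\varphi_k(\zeta)-\varphi_k(z)}-\frac{1}{\zeta-z}$, which is not zero. Compactness yields only that $\mathcal{H}_\Phi$ has finite codimension (Lemma~\ref{finite-codim}). The remaining gap is closed by Banach-algebra arguments: Gorin's classification says every proper closed unital subalgebra lies in one of codimension one, and each such subalgebra is either $\{f:\psi_1(f)=\psi_2(f)\}$ for distinct characters $\psi_1,\psi_2$, or the kernel of a nonzero derivation at some character. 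Lemmas~\ref{lemma-glued} and~\ref{lemma-zero} then show that $\mathcal{H}_\Phi$ can lie in such a subalgebra only if $\Phi$ identifies two points of $\Omega$ or $\Phi'$ vanishes somewhere---both excluded by hypothesis. This algebraic endgame is exactly what replaces the exact decomposition you are attempting.
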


It follows that in this case all the algebras in~\eqref{eq:inclusions}
coincide, as do all those in~\eqref{eq:inclusions2}.

Some of the conditions that we are imposing on $\Phi$ are easily seen
to in fact be necessary for the equality $\Phi^*H^\infty(\D^n) =
H^\infty(\Omega)$, which is weaker than $\Phi^*\SA(\D^n) =
H^\infty(\Omega)$, to hold.  For instance, if $\Phi$ is not injective,
then no function in $\Phi^*H^\infty(\D^n)$ is injective, so this set
cannot be all of $H^\infty(\Omega)$.  Similarly, if $\Phi'(z_0) = 0$
for some $z_0 \in \Omega$, then we have $f'(z_0) = 0$ for every $f \in
\Phi^*H^\infty(\D^n)$, which again implies $\Phi^*H^\infty(\D^n) \neq
H^\infty(\Omega)$.  Finally, if there is a point $z_0 \in \partial
\Omega$ such that $|\varphi_k(z_0)| < 1$ for all functions
$\varphi_k$, then every function in $\Phi^*H^\infty(\D^n)$ is
continuous at $z_0$, so once again $\Phi^*H^\infty(\D^n) \neq
H^\infty(\Omega)$.  It is also easy to show that $\Phi^*
A(\overline{\D}^n) \neq A(\overline\Omega)$ in this case as well.
This motivates conditions (a) and (b) in the definition of an
admissible function.

In the case when $\Phi$ is not injective or $\Phi'$ vanishes at some
points, we prove the following result, which according to the remarks
above is the best that we can hope.

\begin{lemma}
  \label{finite-codim}
  If $\Phi : \overline{\Omega}\to\overline{\D}^n$ is admissible, then
  $\mathcal{H}_\Phi$ is a closed subalgebra of finite codimension in
  $H^\infty(\Omega)$, and $\mathcal{A}_\Phi$ is a closed subalgebra of
  finite codimension in $A(\overline{\Omega})$.
\end{lemma}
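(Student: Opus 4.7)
The strategy is to reduce Lemma~\ref{finite-codim} to Theorem~\ref{A-Hinfty} by adjoining one extra admissible coordinate that resolves the pathologies of $\Phi$ on its finite bad set. Let $E \subset \overline{\Omega}$ be the union of the zeros of $\Phi'$ and the set $X$ of points where $\Phi$ is non-injective; this set is finite by the remark preceding the lemma, and conditions (b), (e), (f) combined with the maximum principle force $E \subset \Omega$. I construct an auxiliary function $\varphi_{n+1}$, analytic in some neighborhood $\Omega_{n+1}$ of $\overline\Omega$, mapping $\overline\Omega$ into $\overline\D$, with $\varphi_{n+1}'$ of class H\"older $\alpha$, such that $\varphi_{n+1}'$ is nonzero at every zero of $\Phi'$ and $\varphi_{n+1}$ takes distinct values at every pair of distinct points identified by $\Phi$; a small scalar multiple of a polynomial interpolating prescribed $1$-jets at the finitely many relevant points does the job. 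Setting $J_{n+1} = \emptyset$ makes the admissibility conditions at index $n+1$ vacuous, while condition (a) persists via $\bigcup_{k=1}^n J_k = \partial\Omega$, so the augmented tuple $\widetilde\Phi = (\varphi_1,\ldots,\varphi_{n+1})$ is admissible, injective, and $\widetilde\Phi'$ vanishes nowhere. Theorem~\ref{A-Hinfty} applied to $\widetilde\Phi$ then yields $\mathcal{H}_{\widetilde\Phi} = H^\infty(\Omega)$ and $\mathcal{A}_{\widetilde\Phi} = A(\overline\Omega)$.

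The finite codimension and closedness of $\mathcal{H}_\Phi$ will follow from an inclusion $I_N \subset \mathcal{H}_\Phi$ for some integer $N$, where
\begin{equation*}
I_N = \{f \in H^\infty(\Omega) : f \text{ vanishes to order} \ge N \text{ at each point of } E\}.
\end{equation*}
Indeed, $I_N$ is a closed ideal of codimension $N\cdot\#E$ in $H^\infty(\Omega)$, so the squeeze $I_N\subset\mathcal{H}_\Phi\subset H^\infty(\Omega)$ gives finite codimension; closedness follows because $\mathcal{H}_\Phi/I_N$ is a subspace of the finite-dimensional quotient $H^\infty(\Omega)/I_N$, hence closed there, and pulling back through the quotient map gives $\mathcal{H}_\Phi$ closed in $H^\infty(\Omega)$. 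The analogous argument applies to $\mathcal{A}_\Phi\subset A(\overline\Omega)$.

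The technical heart of the proof is thus the inclusion $I_N \subset \mathcal{H}_\Phi$, which I would establish by revisiting the separation-of-singularities construction used to prove Theorem~\ref{A-Hinfty}. Given $f\in I_N$, decompose $f = \sum_{k=1}^n f_k$ with each $f_k$ analytic in $\Omega_k$ (via a Cauchy-type integral against a partition of $\partial\Omega$ subordinate to the $J_k$); the decomposition can be arranged so that every $f_k$ inherits the vanishing of $f$ to order $\ge N$ at each point of $E$. Then build $g_k \in H^\infty(\D)$ with $f_k = g_k \circ \varphi_k$, defining $g_k$ first on $\varphi_k(J_k) \subset \T$ using the injectivity of $\varphi_k$ on $J_k$ (condition (f)) and extending to $\D$ by a Cauchy formula, exactly as in Theorem~\ref{A-Hinfty}. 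The obstructions to this second step---multi-valuedness of the pullback by $\varphi_k$ (when $\varphi_k$ is not injective on $\Omega_k$) and high-order zeros of $\varphi_k'$ inside $\Omega_k$---live in a finite-dimensional jet space supported at $E$, and for $N$ exceeding the maximal order involved, the hypothesis $f \in I_N$ kills all of these obstructions. The main obstacle is the verification that the vanishing hypothesis really suffices to make the local pullback by $\varphi_k$ single-valued and bounded on $\D$, which demands a careful local jet analysis at each of the finitely many singular points of $\Phi$, running in parallel to the Cauchy-transform estimates that power the proof of Theorem~\ref{A-Hinfty}.
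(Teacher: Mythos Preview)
Your proposal has a fatal circularity: in the paper's logical structure, Theorem~\ref{A-Hinfty} is \emph{deduced from} Lemma~\ref{finite-codim} (see the opening of Section~\ref{proof-A-Hinfty}), so you cannot invoke Theorem~\ref{A-Hinfty} for the augmented map $\widetilde\Phi$ in order to prove Lemma~\ref{finite-codim}. Moreover, the detour through $\widetilde\Phi$ is never actually used in your second and third paragraphs---the claimed inclusion $I_N\subset\mathcal{H}_\Phi$ makes no reference to $\varphi_{n+1}$---so the first paragraph is disconnected from the rest of the argument.

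The third paragraph is closer in spirit to what is needed, but it misidentifies where the separation-of-singularities work lives: that construction is carried out in the proof of Theorem~\ref{compact-operator}, not in Theorem~\ref{A-Hinfty}. Once Theorem~\ref{compact-operator} is available, the paper's proof of Lemma~\ref{finite-codim} is two lines and requires no jet analysis or vanishing hypotheses: the operator $L(f)=\sum_{k=1}^n F_k(f)\circ\varphi_k$ satisfies $I-L$ compact, so by Fredholm theory $L$ has closed range of finite codimension in $H^\infty(\Omega)$; since $\operatorname{ran}L\subset\mathcal{H}_\Phi\subset H^\infty(\Omega)$, the sandwich forces $\mathcal{H}_\Phi$ to be closed and of finite codimension. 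The same argument, restricted to $A(\overline\Omega)$, handles $\mathcal{A}_\Phi$. Your programme of redoing the Cauchy-transform construction with extra vanishing to kill local obstructions is therefore substantially more work than necessary, and in any case remains vague at the key step (why high-order vanishing at $E$ makes the pullback through $\varphi_k$ globally single-valued and bounded on $\D$ is asserted but not argued).
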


In fact, we prove below that $\mathcal{H}_\Phi$ is also weak*-closed
in $H^\infty(\Omega)$ (see Section~\ref{weak-star}).

Regarding the algebras $H^\infty(\V)$ and $A(\overline{\V})$ of
functions defined on the analytic curve $\V$, we prove the following
result.

\begin{theorem}
  \label{analytic-curve}
  If $\Phi : \overline{\Omega}\to\overline{\D}^n$ is admissible, then
  $\Phi^*H^\infty(\V) = \mathcal{H}_\Phi$ and $\Phi^*A(\overline{\V})
  = \mathcal{A}_\Phi$.
\end{theorem}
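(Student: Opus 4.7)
The inclusions $\mathcal{H}_\Phi \subset \Phi^*H^\infty(\V)$ and $\mathcal{A}_\Phi \subset \Phi^*A(\overline\V)$ are immediate from the chains (\ref{eq:inclusions}) and (\ref{eq:inclusions2}), so the substance of the theorem lies in the reverse inclusions. I describe the plan for the $H^\infty$-case; the $A$-case is entirely parallel.

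My plan is to realize $\Phi^*H^\infty(\V)$ intrinsically as a finite-codimension subalgebra of $H^\infty(\Omega)$ cut out by jet conditions at the singularities of $\V$, and then to match codimensions with $\mathcal{H}_\Phi$ via Lemma \ref{finite-codim}. Let $E \subset \overline{\Omega}$ be the finite exceptional set (provided by the discussion following the definition of admissibility) of points where $\Phi'$ vanishes or $\Phi$ fails to be injective, and let $P = \Phi(E) \subset \V$. I claim that $g \in H^\infty(\Omega)$ lies in $\Phi^*H^\infty(\V)$ if and only if, for each $p \in P$ with $\Phi^{-1}(p) = \{\zeta_1,\ldots,\zeta_m\}$, there is a holomorphic $F$ on some $\C^n$-neighborhood of $p$ with $g = F \circ \Phi$ near each $\zeta_i$. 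The ``only if'' direction is the definition of analyticity of $f$ on $\V$; for the ``if'' direction, one defines $f$ on $\V \setminus P$ by $f(\Phi(z)) = g(z)$ (well-defined since $\Phi$ is injective on $\overline\Omega \setminus E$) and extends to $P$ using the local $F$'s, which patch together holomorphically on $\V$. These compatibility conditions translate into finitely many linear constraints on the Taylor jets of $g$ at the preimages of $P$: value-matching at multiple points, and vanishing of low-order coefficients at ramification points.

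It is then immediate that every $g \in \mathcal{H}_\Phi$ satisfies these conditions: a single generator $f(\varphi_k(z))$ equals $\Phi^* G(z)$ for $G(w_1,\ldots,w_n) = f(w_k) \in H^\infty(\D^n)$, whose restriction to $\V$ extends $f$ holomorphically; finite sums of products remain in $\Phi^*H^\infty(\V)$ because it is an algebra. Combined with Lemma \ref{finite-codim}, this gives the sandwich $\mathcal{H}_\Phi \subset \Phi^*H^\infty(\V) \subset H^\infty(\Omega)$ with $\mathcal{H}_\Phi$ of finite codimension.

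The main obstacle is closing this sandwich. My plan is to identify the finite-dimensional quotient $H^\infty(\Omega)/\mathcal{H}_\Phi$ produced by Lemma \ref{finite-codim} with the space of jet data at $E$ that is dual to the compatibility conditions defining $\Phi^*H^\infty(\V)$; equivalently, to show that the functionals separating $H^\infty(\Omega)$ from $\mathcal{H}_\Phi$ are exactly those encoding these compatibility conditions. This matching is expected to be transparent from the structure of the proof of Lemma \ref{finite-codim}; if not, one can instead compute the codimension of $\Phi^*H^\infty(\V)$ in $H^\infty(\Omega)$ directly as the sum over $p \in P$ of the local ``singularity defect'' of $\V$ at $p$ (the discrepancy between the direct sum of local analytic rings of $\Omega$ at the $\zeta_i$'s and the local analytic ring of $\V$ at $p$), and verify that it coincides with the codimension of $\mathcal{H}_\Phi$.
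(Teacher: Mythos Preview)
Your characterization of $\Phi^*H^\infty(\V)$ as the subalgebra of $H^\infty(\Omega)$ cut out by finitely many jet conditions at the exceptional fibers is correct and useful. The problem is your plan for closing the sandwich $\mathcal{H}_\Phi \subset \Phi^*H^\infty(\V) \subset H^\infty(\Omega)$. You propose to read off the quotient $H^\infty(\Omega)/\mathcal{H}_\Phi$ from the proof of Lemma~\ref{finite-codim}, but that proof is non-constructive: it only says that $\mathcal{H}_\Phi$ contains the range of a Fredholm operator $I-K$ with $K$ compact, and there is no description of the annihilating functionals. Your fallback, computing both codimensions independently and verifying they agree, amounts to bounding $\operatorname{codim}\mathcal{H}_\Phi$ from above by the singularity defect of $\V$, and nothing in your outline does this; indeed, proving that $\mathcal{H}_\Phi$ is cut out by \emph{no more} than the obvious jet conditions is precisely the content of the theorem.

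The paper closes the gap differently. It first proves (Lemma~\ref{lemma-vanishing}) that there is a finite set $X\subset\Omega$ and an integer $N$ such that any $g\in H^\infty(\Omega)$ vanishing to order $N$ at every point of $X$ already lies in $\mathcal{H}_\Phi$. This requires Gamelin's structure theorem for finite-codimension subalgebras (glued subalgebras and $\theta$-subalgebras) together with Lemmas~\ref{lemma-glued} and~\ref{lemma-zero}, which force the relevant complex homomorphisms and derivations to be supported in $\Omega$ rather than on the boundary. Once Lemma~\ref{lemma-vanishing} is available, the argument is short: given $F\in H^\infty(\V)$, extend $F$ holomorphically to a $\C^n$-neighborhood of each $w_j=\Phi(z_j)$, choose a polynomial $P\in\C[z_1,\dots,z_n]$ matching the $N$-jets $D^\alpha P(w_j)=D^\alpha F(w_j)$ for $|\alpha|\le N$, and observe that $\Phi^*(F-P)$ vanishes to order $N$ on $X$, hence lies in $\mathcal{H}_\Phi$, while $\Phi^*P$ is a polynomial in $\varphi_1,\dots,\varphi_n$ and so is trivially in $\mathcal{H}_\Phi$. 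No codimension count is needed.
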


In this case the first four algebras in~\eqref{eq:inclusions} and the
first four in~\eqref{eq:inclusions2} coincide, while the last
inclusions can be proper, though $\Phi^*H^\infty(\V)$ happens to be
weak*-closed in $H^\infty(\Omega)$, while $\Phi^*A(\overline{\V})$ is
norm closed in $A(\overline{\Omega})$, and both have finite
codimension.

This theorem allows us to prove a result on the extension of functions
in $\V$ to the Agler algebra.

\begin{theorem}
  \label{extension}
  If $\Phi:\overline{\Omega}\to\overline{\D}^n$ is admissible, for
  every $f \in H^\infty(\V)$ there is an $F \in \SA(\D^n)$ such that
  $F|\V = f$ and $\|F\|_{\SA(\D^n)}\leq C\|f\|_{H^\infty(\V)}$, for
  some constant $C$ independent of $f$.  Additionally, if $f \in
  A(\overline{\V})$, then $F$ can be taken to belong to $\SAa(\D^n)$.
\end{theorem}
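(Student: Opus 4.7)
The theorem should follow almost immediately from the earlier results combined with the open mapping theorem; the real work has already been done in Theorem~\ref{analytic-curve} and Lemma~\ref{finite-codim}. My plan is the following.

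First I would use the inclusions~\eqref{eq:inclusions} together with Theorem~\ref{analytic-curve} to deduce
\begin{equation*}
  \mathcal{H}_\Phi \;\subset\; \Phi^* \SA(\D^n) \;\subset\; \Phi^* H^\infty(\V) \;=\; \mathcal{H}_\Phi,
\end{equation*}
so that in fact $\Phi^*\SA(\D^n) = \mathcal{H}_\Phi$. By the same reasoning applied to~\eqref{eq:inclusions2}, $\Phi^*\SAa(\D^n) = \mathcal{A}_\Phi$.

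Next I would observe that $\Phi^* \colon \SA(\D^n) \to H^\infty(\Omega)$ is a bounded homomorphism, since $\|F \circ \Phi\|_{H^\infty(\Omega)} \leq \|F\|_{H^\infty(\D^n)} \leq \|F\|_{\SA(\D^n)}$. By Lemma~\ref{finite-codim}, $\mathcal{H}_\Phi$ is closed in $H^\infty(\Omega)$, hence a Banach space under the supremum norm. Thus $\Phi^* \colon \SA(\D^n) \to \mathcal{H}_\Phi$ is a surjective bounded linear map between Banach spaces, and the open mapping theorem produces a constant $C > 0$ such that for every $g \in \mathcal{H}_\Phi$ there exists $F \in \SA(\D^n)$ with $\Phi^* F = g$ and $\|F\|_{\SA(\D^n)} \leq C\|g\|_{H^\infty(\Omega)}$.

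To conclude, given $f \in H^\infty(\V)$ I would set $g = f \circ \Phi = \Phi^* f$. Clearly $\|g\|_{H^\infty(\Omega)} \leq \|f\|_{H^\infty(\V)}$, and by Theorem~\ref{analytic-curve} one has $g \in \Phi^*H^\infty(\V) = \mathcal{H}_\Phi$. The previous step then delivers an $F \in \SA(\D^n)$ with $F \circ \Phi = g$ and $\|F\|_{\SA(\D^n)} \leq C\|f\|_{H^\infty(\V)}$. For any $p \in \V$, choosing $z \in \Omega$ with $\Phi(z) = p$ gives $F(p) = F(\Phi(z)) = f(\Phi(z)) = f(p)$, so $F|\V = f$. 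The refinement for $f \in A(\overline{\V})$ is obtained verbatim, replacing $H^\infty$ by $A$ and $\SA$ by $\SAa$, using that $\mathcal{A}_\Phi$ is closed in $A(\overline{\Omega})$ by Lemma~\ref{finite-codim} and $\Phi^*\SAa(\D^n) = \mathcal{A}_\Phi = \Phi^*A(\overline{\V})$ by Theorem~\ref{analytic-curve}.

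Since no new analytic input is required, there is no genuine obstacle in this argument; the only points to check carefully are that the open mapping theorem applies (i.e.\ that $\mathcal{H}_\Phi$ and $\mathcal{A}_\Phi$ are closed in their ambient Banach algebras, which is granted by Lemma~\ref{finite-codim}) and that equality of $F \circ \Phi$ with $f \circ \Phi$ on $\Omega$ really pushes down to $F|\V = f$, which follows because $\V = \Phi(\Omega)$.
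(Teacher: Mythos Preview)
Your argument is correct and is genuinely different from the paper's. You deduce $\Phi^*\SA(\D^n)=\mathcal H_\Phi$ from the chain~\eqref{eq:inclusions} and Theorem~\ref{analytic-curve}, and then invoke the open mapping theorem for the bounded surjection $\Phi^*\colon \SA(\D^n)\to\mathcal H_\Phi$ to get the bounded right inverse. (For the $A$-version one should note that $\SAa(\D^n)$, with the $\SA$-norm, is closed in $\SA(\D^n)$ because $\|\cdot\|_{H^\infty(\D^n)}\le\|\cdot\|_{\SA(\D^n)}$, so a Cauchy sequence in $\SAa(\D^n)$ converges uniformly and the limit remains in $A(\overline{\D}^n)$; hence the open mapping theorem again applies.)

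The paper instead gives an explicit construction. Using the operators $F_k$ from Theorem~\ref{compact-operator} and the fact that $I-L$ is compact on $\mathcal H_\Phi$ (where $Lg=\sum_k F_k(g)\circ\varphi_k$), Fredholm theory produces $R$ and a finite-rank $P$ with $I=LR+P$. Writing $Pg=\sum_k\alpha_k(g)g_k$ with each $g_k=\sum_j\prod_i f_{j,i,k}\circ\varphi_i$, the authors set
\[
F(z_1,\dots,z_n)=\sum_{k=1}^n F_k(R\Phi^*f)(z_k)+\sum_{k}\sum_j\alpha_k(\Phi^*f)\prod_i f_{j,i,k}(z_i),
\]
and bound $\|F\|_{\SA(\D^n)}$ term by term using von~Neumann's inequality. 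Your route is shorter and more conceptual; the paper's route is constructive and yields additional information alluded to in the Introduction, namely that on a closed finite-codimensional subspace of $H^\infty(\V)$ one can take the extension to be of the special form $F_1(z_1)+\cdots+F_n(z_n)$ with $F_j\in H^\infty(\D)$.
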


Our proofs use an extension of the techniques of Havin and Nersessian
and Ortega-Cerd\'a~\cites{HavinNersessian,HavinNersCerda}, which
concern the separation of singularities of bounded analytic functions,
defined on open subsets of $\C$.  Havin and Nersessian prove in
\cite{HavinNersessian} that if $\Omega_1,\Omega_2$ are domains in $\C$
such that their boundaries intersect transversally, then
$H^\infty(\Omega_1\cap\Omega_2) =
H^\infty(\Omega_1)+H^\infty(\Omega_2)$, in the sense that every
function $f \in H^\infty(\Omega_1)$ can be written as $f = f_1 + f_2$
with $f_j \in H^\infty(\Omega_j)$.

The main tool for the proof of the above theorems is the next, which
can be understood as a kind of decomposition result for functions in
$H^\infty(\Omega)$.  Its proof is given in
Section~\ref{proof-compact-operator}.

\begin{theorem}
  \label{compact-operator}
  If $\Phi:\overline{\Omega}\to\overline{\D}^n$ is admissible, then
  there exist bounded linear operators $F_k : H^\infty(\Omega) \to
  H^\infty(\D)$, $k = 1,\ldots,n$, such that the operator defined by
  \begin{equation*}
    f \mapsto f - \sum_{k=1}^n F_k(f) \circ \varphi_k,\qquad f \in
    H^\infty(\Omega),
  \end{equation*}
  is compact in $H^\infty(\Omega)$ and its range is contained in
  $\A(\overline{\Omega})$.  Moreover, $F_k$ maps
  $A(\overline{\Omega})$ into $A(\overline{\D})$, for $k =
  1,\ldots,n$.
\end{theorem}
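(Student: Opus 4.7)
My plan is to construct the operators $F_k$ explicitly via Cauchy-type integrals adapted to the conformal change of variable $\eta = \varphi_k(\zeta)$, and then to identify the error operator $T(f) = f - \sum_{k=1}^n F_k(f) \circ \varphi_k$ as an integral operator with a continuous kernel, from which compactness and the range property in $A(\overline{\Omega})$ follow by standard arguments. First, I fix a smooth partition of unity $\{\chi_k\}_{k=1}^n$ on $\partial\Omega$ subordinate to the cover $\{J_k\}$, with each $\chi_k$ supported in $J_k$ and vanishing smoothly at the endpoints of its arcs; condition~(d) on the circular sectors is what coordinates the $\chi_k$ at points where arcs belonging to different $J_k$'s meet. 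I then set
\begin{equation*}
F_k(f)(w) = \frac{1}{2\pi i} \int_{J_k} \frac{\chi_k(\zeta) f(\zeta) \varphi_k'(\zeta)}{\varphi_k(\zeta) - w} \, d\zeta, \qquad w \in \D,
\end{equation*}
which after the substitution $\eta = \varphi_k(\zeta)$ is a Cauchy integral over the arc $\varphi_k(J_k) \subset \T$ of the density $(\chi_k f) \circ \varphi_k^{-1}$.

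Starting from the Cauchy formula $f(z) = (2\pi i)^{-1} \int_{\partial\Omega} f(\zeta)/(\zeta - z)\, d\zeta$ for $z \in \Omega$, inserting $1 = \sum_k \chi_k$ on $\partial\Omega$, and subtracting the expression for $F_k(f)(\varphi_k(z))$, I obtain
\begin{equation*}
T(f)(z) = \sum_{k=1}^n \frac{1}{2\pi i} \int_{J_k} \chi_k(\zeta) f(\zeta)\, K_k(\zeta, z)\, d\zeta,\qquad K_k(\zeta, z) = \frac{1}{\zeta - z} - \frac{\varphi_k'(\zeta)}{\varphi_k(\zeta) - \varphi_k(z)}.
\end{equation*}
A Taylor expansion at $\zeta = z$ gives $K_k(\zeta, z) \to -\varphi_k''(z)/(2\varphi_k'(z))$, so the apparent pole is removable. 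Combining (c) (H\"older regularity of $\varphi_k'$), (e) ($|\varphi_k'| \geq C > 0$ on $J_k$), (f) (injectivity of $\varphi_k|J_k$ together with $\varphi_k(J_k) \cap \varphi_k(\partial\Omega \setminus J_k) = \emptyset$), and the strict maximum principle $|\varphi_k(z)| < 1$ for $z \in \Omega$, I conclude that $K_k$ is jointly continuous and bounded on $\overline{\Omega} \times J_k$ and analytic in $z$ for fixed $\zeta \in J_k$.

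With continuity of $K_k$ in hand, dominated convergence shows that $T(f)$ extends continuously to $\overline{\Omega}$, hence lies in $A(\overline{\Omega})$. Uniform continuity of $K_k$ on the compact $\overline{\Omega} \times J_k$ makes the family $\{T(f) : \|f\|_{H^\infty(\Omega)} \leq 1\}$ equicontinuous on $\overline{\Omega}$, so by Arzel\`a--Ascoli $T$ is compact as an operator on $H^\infty(\Omega)$. The last assertion, $F_k(A(\overline{\Omega})) \subset A(\overline{\D})$, follows because for $f \in A(\overline{\Omega})$ the density $\chi_k f$ is continuous on $\partial\Omega$ and vanishes at the endpoints of $\varphi_k(J_k)$, which is precisely the condition ensuring that a Cauchy integral on an arc of $\T$ extends continuously to $\overline{\D}$.

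The main obstacle is establishing the boundedness $F_k : H^\infty(\Omega) \to H^\infty(\D)$. A direct $L^\infty$-density estimate only places $F_k(f)$ in BMOA, with a logarithmic blow-up as $w$ approaches the relative interior of $\varphi_k(J_k)$ from inside $\D$; the $H^\infty$ target therefore requires the full strength of the admissibility conditions together with the separation-of-singularities technology of Havin, Nersessian and Ortega-Cerd\'a. Concretely, I expect to correct the basic Cauchy integral in the formula for $F_k$ by adding a bounded solution of a suitable $\overline{\partial}$-problem localized near $\varphi_k(J_k)$, where the sector geometry of~(d) is what guarantees that these corrections match up consistently at shared endpoints of arcs belonging to different $J_k$'s. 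This is the step I expect to require the most care.
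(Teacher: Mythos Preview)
Your overall architecture agrees with the paper's: define $F_k$ via modified Cauchy integrals so that the remainder has the kernel $K_k(\zeta,z)=(\zeta-z)^{-1}-\varphi_k'(\zeta)/(\varphi_k(\zeta)-\varphi_k(z))$, and read off compactness from a kernel estimate. One correction is needed, though it does not derail this part: under admissibility $\varphi_k'$ is only H\"older~$\alpha$, so your Taylor limit $-\varphi_k''(z)/(2\varphi_k'(z))$ is unavailable at boundary points and $K_k$ is in general only weakly singular, $|K_k(\zeta,z)|\le C|\zeta-z|^{\alpha-1}$ (this is Lemma~\ref{weak-singularity}). Compactness and the $A(\overline\Omega)$ range still follow, but via the standard theory of weakly singular integral operators (Lemma~\ref{weak-singular-integral}) rather than a continuous-kernel Arzel\`a--Ascoli argument.

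The genuine gap is the one you flag yourself: with $F_k(f)=\CauchyMod{\chi_k f}{\varphi_k}{J_k}$ uncorrected, $F_k(f)$ need not lie in $H^\infty(\D)$, and your proposed remedy---a localized $\overline\partial$ correction---is left entirely unspecified and is \emph{not} how the paper proceeds. The paper's construction is concrete and different. One refines the partition on $\partial\Omega$ to $\sum_k\eta_k+\sum_k\nu_k=1$, with $\eta_k$ supported in the relative interior of $J_k$ and $\nu_k$ supported in a small disk $V_k$ around the corner $z_k=J_k\cap J_{k+1}$. The interior piece $\CauchyMod{\eta_k f}{\varphi_k}{J_k}$ already lies in $H^\infty(\D)$, since $\Cauchy{\eta_k f}{J_k}\in H^\infty(\C\setminus J_k)$ and this transfers through $\varphi_k$ by Lemma~\ref{lemma-simple}. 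For the corner pieces one uses the Havin--Nersessian rotation device: rotate $J_k^+=J_k\cap V_k$ rigidly about $z_k$ to an arc $J_k^R$ lying in the exterior sector $(S_k(z_k)\cap S_{k+1}(z_k))\setminus\overline\Omega$ supplied by condition~(d), and set
\[
F_k(f)=\CauchyMod{f}{\varphi_k}{J_k}-\CauchyMod{(\nu_k f)\circ R_k^{-1}}{\varphi_k}{J_k^R}+\CauchyMod{(\nu_{k-1}f)\circ R_{k-1}^{-1}}{\varphi_k}{J_{k-1}^R}.
\]
The resulting corner combinations $G_k^\pm(f)$ are then shown to be in $H^\infty$ by invoking \cite{HavinNersessian}*{Theorem~4.1} together with a Phragm\'en--Lindel\"of argument in a sector at $\varphi_k(z_k)$; no $\overline\partial$ solution enters. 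These rotated-arc corrections are also what make the error operator split as $\sum_k A_k(f)+B_k(\nu_k f)$ with each piece falling under Lemma~\ref{diff-compact}. Without this device, the passage from BMOA to $H^\infty(\D)$ remains unproved in your outline.
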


There is some relationship between our setting and the algebra
generation problem.  Given any finite family
$\Phi=\{\varphi_k\}\subset A(\overline{\Omega})$, one can also
consider the algebras $\overline A_\Phi$, the smallest \textit{norm
  closed} subalgebra of $A(\overline{\Omega})$ containing $\Phi$, and
${\overline H}^\infty_\Phi$, the weak*-closed subalgebra of
$H^\infty(\Omega)$ generated by the family $\Phi$.

\begin{proposition}
  If $\Phi:\overline{\Omega}\to\overline{\D}^n$ is admissible, then
  $\mathcal{A}_\Phi = \overline A_\Phi$ and ${\mathcal H}_\Phi =
  {\overline H}^\infty_\Phi$.
\end{proposition}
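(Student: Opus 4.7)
The plan is to prove each equality as a double inclusion, exploiting the paper's other results. First, for the direction $\overline{A}_\Phi\subset\mathcal{A}_\Phi$ (and analogously $\overline{H}^\infty_\Phi\subset\mathcal{H}_\Phi$), observe that $\mathcal{A}_\Phi$ and $\mathcal{H}_\Phi$ trivially contain each generator $\varphi_k$ (take $f = \mathrm{id}$ and make the remaining factors constant). Lemma~\ref{finite-codim} shows $\mathcal{A}_\Phi$ is norm-closed, while the paper asserts that $\mathcal{H}_\Phi$ is weak*-closed (the proof deferred to Section~\ref{weak-star}). Minimality of $\overline{A}_\Phi$ and $\overline{H}^\infty_\Phi$ among the respective classes of closed subalgebras containing $\Phi$ then yields the stated containments.

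For the reverse inclusion in the disk-algebra setting, I would approximate in norm: given $f\in A(\overline{\D})$, Mergelyan's theorem (or the classical Fej\'er--Taylor approach) produces polynomials $p_m\to f$ uniformly on $\overline{\D}$. Then $p_m\circ\varphi_k\to f\circ\varphi_k$ uniformly on $\overline{\Omega}$, and since each $p_m\circ\varphi_k$ is a polynomial in $\varphi_k$, the norm-closure $\overline{A}_\Phi$ contains $f\circ\varphi_k$. As $\overline{A}_\Phi$ is an algebra, every finite sum of products $\prod_k f_{j,k}\circ\varphi_k$ lies there, giving $\mathcal{A}_\Phi\subset\overline{A}_\Phi$.

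The $H^\infty$ case is analogous but requires a weak*-approximation. Given $f\in H^\infty(\D)$, the dilations $f_r(z) = f(rz)$ for $0<r<1$ lie in $A(\overline{\D})$ and are uniform limits of polynomials, so each $f_r\circ\varphi_k$ is a uniform limit of polynomials in $\varphi_k$ and belongs to $\overline{H}^\infty_\Phi$. As $r\to 1^-$, the family $\{f_r\circ\varphi_k\}$ is bounded by $\|f\|_\infty$ and converges pointwise on $\Omega$ to $f\circ\varphi_k$. Since on bounded subsets of $H^\infty(\Omega)$ the weak*-topology agrees with uniform convergence on compact subsets of $\Omega$ (via Vitali's theorem), this yields weak*-convergence, and the weak*-closedness of $\overline{H}^\infty_\Phi$ gives $f\circ\varphi_k\in\overline{H}^\infty_\Phi$. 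Separate weak*-continuity of multiplication in $H^\infty(\Omega)$, inherited from the $H^\infty$-module structure of its predual, ensures that $\overline{H}^\infty_\Phi$ is closed under products and hence contains all of $\mathcal{H}_\Phi$.

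The only non-routine ingredient in this argument is the weak*-closedness of $\mathcal{H}_\Phi$, which the paper isolates in Section~\ref{weak-star} and which presumably relies on Theorem~\ref{compact-operator}. Beyond that, the proof is a standard polynomial-approximation exercise, and I do not anticipate any substantial obstacle.
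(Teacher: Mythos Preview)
Your proof is correct and follows essentially the same approach as the paper: both rely on Lemma~\ref{finite-codim} for the norm-closedness of $\mathcal{A}_\Phi$ and the result of Section~\ref{weak-star} for the weak*-closedness of $\mathcal{H}_\Phi$, then invoke minimality of $\overline{A}_\Phi$ and $\overline{H}^\infty_\Phi$. The only difference is that the paper treats the inclusions $\mathcal{A}_\Phi\subset\overline{A}_\Phi$ and $\mathcal{H}_\Phi\subset\overline{H}^\infty_\Phi$ as obvious, whereas you spell out the polynomial/dilation approximation argument explicitly.
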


\begin{proof}
  It is clear that in general, $\mathcal{A}_\Phi\subset \overline
  A_\Phi$ and ${\mathcal H}_\Phi\subset {\overline H}^\infty_\Phi$.
  By Lemma~\ref{finite-codim}, $\mathcal{A}_\Phi$ is closed in norm,
  and by Lemma~\ref{weak-star-closed}, $\mathcal{H}_\Phi$ is
  weak*-closed.  The equalities now follow.
\end{proof}

Theorem~\ref{A-Hinfty} then implies corresponding results about the
generation of algebras.

\begin{corollary}
  \label{corollary-generation}
  If $\Phi:\overline{\Omega}\to\overline{\D}^n$ is admissible and
  injective and $\Phi'$ does not vanish in $\Omega$, then $\overline
  A_\Phi =A(\overline{\Omega})$ and ${\overline H}^\infty_\Phi =
  H^\infty(\Omega)$.
\end{corollary}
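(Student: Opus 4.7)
The plan is to obtain the corollary as an immediate combination of Theorem~\ref{A-Hinfty} and the preceding Proposition, with no new analytic argument required. The hypotheses of the corollary (admissibility of $\Phi$, injectivity, and non-vanishing of $\Phi'$ on $\Omega$) are exactly those of Theorem~\ref{A-Hinfty}, which delivers $\mathcal{H}_\Phi = H^\infty(\Omega)$ and $\mathcal{A}_\Phi = A(\overline{\Omega})$. The Proposition, which needs only admissibility, supplies $\mathcal{A}_\Phi = \overline{A}_\Phi$ and $\mathcal{H}_\Phi = \overline{H}^\infty_\Phi$. Chaining these identifications yields $\overline{A}_\Phi = A(\overline{\Omega})$ and $\overline{H}^\infty_\Phi = H^\infty(\Omega)$, which is exactly the statement to be proved.

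There is no substantive obstacle to overcome at this point: all of the technical content has been absorbed into the two results being combined. The heavy lifting sits in Theorem~\ref{A-Hinfty}, which rests on the Havin--Nersessian--Ortega-Cerd\'a-style decomposition of Theorem~\ref{compact-operator}, while the Proposition carries the closedness facts coming from Lemma~\ref{finite-codim} (norm-closedness of $\mathcal{A}_\Phi$) and Lemma~\ref{weak-star-closed} (weak*-closedness of $\mathcal{H}_\Phi$). The corollary thus amounts to reading off two equalities and composing them.

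The only bookkeeping point worth double-checking, to be sure the chain is not circular, is the elementary inclusion used implicitly inside the Proposition's proof: namely $\mathcal{A}_\Phi \subset \overline{A}_\Phi$ and $\mathcal{H}_\Phi \subset \overline{H}^\infty_\Phi$. Each generator $f\circ\varphi_k$ of $\mathcal{A}_\Phi$ lies in $\overline{A}_\Phi$ because $f \in A(\overline{\D})$ is a uniform limit of polynomials by Mergelyan's theorem, so $f\circ\varphi_k$ is a norm limit of polynomials in $\varphi_k$; analogously, each generator $f\circ\varphi_k$ of $\mathcal{H}_\Phi$ with $f \in H^\infty(\D)$ is a bounded pointwise (hence weak*) limit of polynomials in $\varphi_k$ via Taylor truncation applied to dilations. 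With this confirmed, the corollary follows in a single step.
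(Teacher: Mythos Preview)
Your proposal is correct and follows exactly the paper's approach: the corollary is obtained by combining Theorem~\ref{A-Hinfty} with the preceding Proposition, with no further argument needed. Your added bookkeeping on the inclusions $\mathcal{A}_\Phi \subset \overline{A}_\Phi$ and $\mathcal{H}_\Phi \subset \overline{H}^\infty_\Phi$ is more detail than the paper provides (which simply calls these inclusions clear), but it is accurate and harmless.
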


The assertions that $\mathcal{A}_\Phi=A(\overline{\Omega})$ and
${\mathcal H}_\Phi=H^\infty(\Omega)$ are much stronger than just the
fact that $\Phi$ generates algebras $A(\overline{\Omega})$ and
$H^\infty(\Omega)$ (in the weak* sense, in the last case).  For
instance, as was mentioned, the equalities
$\mathcal{A}_\Phi=A(\overline{\Omega})$ and ${\mathcal
  H}_\Phi=H^\infty(\Omega)$ are impossible if there is a point $z_0\in
\partial\Omega$ such that $\max_k|\varphi_k(z_0)|<1$, whereas $\Phi$
still can still generate algebras $A(\overline{\Omega})$ and
$H^\infty(\Omega)$ in this case.  (Notice that for any nonzero
constants $\{\lambda_k\}$, the families $\{\varphi_k\}$ and
$\{\lambda_k\varphi_k\}$ generate the same closed subalgebras of
$A(\overline{\Omega})$ and $H^\infty(\Omega)$.)  In the applications
to operator theory that we consider in~\cite{Article2}, algebra
generation does not suffice, and the assertions that
$\mathcal{A}_\Phi=A(\overline{\Omega})$ and ${\mathcal
  H}_\Phi=H^\infty(\Omega)$ and so Theorem~\ref{compact-operator} play
an important role there.

\subsection{A brief review of previous results on algebra generation
  and continuation}

The study of generators of algebras of the type $A(\overline{\Omega})$
dates back to Wermer~\cite{Wermer}, where he considered pairs of
functions as generators of the algebra $A(K)$ for a compact subset $K$
of a Riemann surface.  Bishop worked on the same problem independently
in~\cite{Bishop}, using a different approach.  In these two articles,
sufficient conditions are given for generation of the whole algebra,
or of a finite codimensional subalgebra.  Several later works are
devoted to giving weaker sufficient conditions; see~\cite{Blumenthal}
and~\cite{SibonyWermer}.  In~\cites{StessinThomas, MathesonStessin},
the $H^p$ closure rather than of the uniform closure of the algebra is
considered, although~\cite{StessinThomas} does also give a result for
the disk algebra.  Even in the simple case of $A(\overline{\D})$,
necessary and sufficient conditions for a pair of functions to
generate the whole algebra are still unknown (see
\cite{ProblemBook}*{Problem 2.32}).

In most articles on algebra generation, it is assumed that the
derivatives of the generators are continuous up to the boundary.  In
our setting, as we remarked after the definition of an admissible
function, it is only necessary that each function $\varphi_k'$ be
H\"older continuous near the arc $J_k$.  In this sense, it seems that
Corollary~\ref{corollary-generation} is a new result.  We also stress
that our results concern the algebra $\mathcal{A}_\Phi$, which is
\emph{a priori} a non-closed algebra smaller than $\overline A_\Phi$,
the smallest closed algebra containing
$\{\varphi_1,\ldots,\varphi_n\}$. In our applications to operator
theory in \cite{Article2}, it is essential that the theorems we have
stated in the Introduction are proved for $\mathcal{A}_\Phi$ rather
than its closure.

There is a case in which Theorems~\ref{A-Hinfty}
and~\ref{compact-operator} are a straightforward consequence of the
results of Havin and Nersessian~\cite{HavinNersessian} on the
separation of singularities of analytic functions.  In this case a
stronger version of Theorem~\ref{compact-operator} can be obtained;
namely, one can prove the existence of bounded linear operators $F_k :
H^\infty(\Omega) \to H^\infty(\D)$ such that
\begin{equation}
  \label{eq:f-sum}
  f = \sum_{k=1}^n F_k(f) \circ \varphi_k,\qquad f \in H^\infty(\D).
\end{equation}
This case is as follows.  Assume that there are simply connected
domains $D_k$, $k = 1,\ldots,n$, such that $\Omega = \bigcap_k D_k$,
and that $\varphi_k$ are conformal maps from $D_k$ onto $\D$.  If the
boundaries of the domains $D_k$ intersect transversally, by
\cite{HavinNersessian}*{Example 4.1} there are bounded linear
operators $G_k : H^\infty(\Omega) \to H^\infty(D_k)$ such that $f =
\sum_k G_k(f)$ for every $f \in H^\infty(\Omega)$.  If we put $F_k(f)
= G_k(f) \circ \varphi_k^{-1}$, then we get~\eqref{eq:f-sum}.  From
this, the equality $\mathcal{H}_\Phi = H^\infty(\D)$ follows
trivially.  It is not difficult to see that such operators $F_k$ map
$A(\overline{\Omega})$ into $A(\overline{\D})$, so we also get the
equality $\mathcal{A}_\Phi = A(\overline{\Omega})$.

The case when $\Omega = \D$ is important.  Then $\V = \Phi(\D)$ is
called an analytic disk inside the polydisk.  It is a particular kind
of hyperbolic analytic curve, the theory of which has been treated
extensively in the literature.  The function theory of this curves and
its relation with finite codimensional subalgebras of holomorphic
functions was studied by Agler and McCarthy in
\cite{AglerMcCarthyHyp}.  A classification of the finite codimensional
subalgebras of a function algebra which is related to the one that we
use in the proof of Theorem~\ref{A-Hinfty} was given by Gamelin
in~\cite{Gamelin}.

The problem of extension of a bounded analytic function defined on an
analytic curve $\V\subset \D^n$ to the polydisk $\D^n$ dates back to
Rudin and to Stout (see~\cite{Stout}), and was also treated by
Polyakov in~\cite{Polyakov}, and more generally by Polyakov and
Khenkin in~\cite{PolyakovKhenkin}.  The book~\cite{HenkinLeiterer} by
Henkin and Leiterer also treats the extension of bounded analytic
functions defined on subvarieties in a fairly general context.  In
these works, the subvariety $\V$ is assumed to be extendable to a
neighborhood of $\overline{\D}^n$, which means that there is a larger
analytic subvariety $\widetilde{\V}$ of a neighborhood of
$\overline{\D}^n$ such that $\V = \widetilde{\V}\cap \D^n$.  This is
in contrast to a function $\Phi$ meeting our requirements (a)--(f)
``in general position'', in which case the variety $\V=\Phi(\Omega)$
does not extend to a larger analytic variety $\widetilde{\V}$.

The works of Amar and Charpentier~\cite{AmarCharpentier} and of Chee
\cites{Chee76, Chee83} do deal with the setting when this extension of
$\V$ may be absent.  In~\cite{AmarCharpentier}, extensions by bounded
analytic functions to bidisks are considered, whereas the papers
\cites{Chee76, Chee83} concern the case of an analytic variety $\V$ of
codimension~$1$ in a polydisk $\D^n$ and therefore can be compared
with our results only for $n=2$.  Theorem 1.1 in~\cite{Chee83} implies
that in our setting, for the case of $n=2$, every $f \in H^\infty(\V)$
can be extended to an $F\in H^\infty(\D^2)$ such that $F|\V = f$.

For the case of $\V=\Phi(\D)$, where $\Phi:\D\to \D^n$ extends to a
neighborhood of $\overline{\D}$, a necessary and sufficient condition
for the property of analytic bounded extension was given by Stout in
\cite{Stout}, in which case, at least one $\varphi_k$ must be a finite
Blaschke product.

If $\widetilde{\V}$ is an analytic curve in a neighborhood of
$\overline{\D}^n$ such that there is a biholomorphic map
$\widetilde{\Phi}$ of a domain $G\subset \C$ onto $\widetilde{\V}$ and
$\V=\widetilde{\V}\cap \D^n$, the set
$\Omega=\widetilde{\Phi}^{-1}(\V)$ is connected and
$\Phi=\widetilde{\Phi}|\Omega$, then typically all the above
conditions (a)--(e) on $\Phi$ are satisfied, whereas (f) is an
additional requirement.  In this case, if $\Omega$ is simply connected
and $\widehat\Phi=\Phi\circ \eta :\D\to \V$, where $\eta$ is a Riemann
mapping of $\D$ onto $\Omega$, then $\widehat\Phi$ does not continue
analytically to a larger disk unless $\Omega$ has analytic boundary.
In other words, there are cases when $\V$ has an extension to a larger
subvariety whereas $\Phi$ does not extend.

Bounded extensions to an analytic polyhedron $W$ in $\C^n$ from a
subvariety $\V$ of arbitrary codimension were studied by Adachi,
Andersson and Cho in~\cite{AdachiEtl}.  It was assumed there that $\V$
is continuable to a neighborhood of $W$.  Notice that polydisks are
particular cases of analytic polyhedra.

The property of the bounded extension of $H^\infty$ functions does not
hold in general, and one can find several counterexamples in the
literature, see~\cites{Alexander69, DiedMazz97, DiedMazz01 ,Mazzilli}.

There are also many papers in the literature that deal with bounded
extensions in the context of strictly pseudoconvex domains or domains
with smooth boundary (the polydisk does not belong to these classes).
See Diederich and Mazzilli~\cites{DiedMazz97, DiedMazz01} and the
recent paper by Alexandre and Mazzilli~\cite{AlexandreMazzilli}.
Holomorphic extensions have also been studied extensively in different
contexts of $L^p$ norms; we refer to Chee~\cites{Chee83, Chee87} for
the case of the polydisk.  See also the review~\cite{Adachi} by
Adachi, the paper~\cite{AlexandreMazzilli} and references therein for
a more complete information.

The above-mentioned papers use diverse techniques from several complex
variables, such as the Cousin problem and integral representations for
holomorphic functions.  In another group of papers, interesting
results around the problem of bounded continuation are obtained using
the tools of operator theory and the theory of linear systems.  Agler
and McCarthy~\cite{AglerMcCarthyNorm} treat the bounded extension
property with preservation of norms for the bidisk $\D^2$.  See also
\cite{GuoHuangWang2008} for partial results for the case of tridisks
and general polydisks.  It seems that very few varieties $\V$ have
this norm preserving extension property.

In~\cite{Knese2010}, Knese studies the existence of bounded extensions
from distinguished subvarieties of $\D^2$ (without preservation of
norms).  His approach is based on certain representations of
two-variable transfer functions and permits him to give concrete
estimates of the constants.

The same problem can also be studied for the ball $\B^n$ instead of
the polydisk.  In~\cite{AlpayPutinarVinnikov}, Alpay, Putinar and
Vinnikov use reproducing kernel Hilbert space techniques to show that
a bounded analytic function defined on an analytic disk in $\B^n$ can
be extended to $\B^n$.  Indeed they show that it can be extended to an
element of the multiplier algebra of the Drury-Arveson space
$H^2(\B^n)$; this algebra is properly contained in $H^\infty(\B^n)$.
See also~\cite{DavidsonHartzSh2015} for further examples and
counterexamples, and the relationship with the complete
Nevanlinna-Pick property.

The extension problem is also treated in~\cite{KerrMcCarthyShalit},
where it appears as a consequence of isomorphism of certain multiplier
algebras of analytic varieties.  Some problems considered there
resemble those we consider on the pullback by $\Phi$.

Our approach differs from the approaches described above, in that it
relies on techniques inspired by the Havin-Nersessian work, certain
compactness arguments, which show that some subalgebras of $H^\infty$
have finite codimension, and the study of maximal ideals and
derivations in $H^\infty$.  An important aspect distinguishing it from
earlier results, is that we can prove continuation to $\SA(\D^n)$.  If
$\SA(\D^n)$ is strictly contained in $H^\infty(\D^n)$ (it is not known
whether this is true), then our results are stronger.  Indeed, we
prove even more: it follows from the proof of Theorem~\ref{extension}
that there is closed subspace of finite codimension in $H^\infty(\V)$
such that every function in this space can be extended to a function
of the form $F_1(z_1) + \cdots + F_n(z_n)$, where $F_j \in
H^\infty(\D)$ for $j = 1,\ldots,n$.

Following on from the work in~\cite{HavinNersessian}, the papers
\cite{HavinNersCerda} by Havin, Nersessian and Ortega-Cerd\`a, and
\cite{Khavin} by Havin prove separation of singularities under weaker
hypothesis. It would be interesting to know if some of these results,
in particular the examples at the end of~\cite{Khavin} can be used to
extend what we do.

\subsection{The organization of the paper}

Section~\ref{proof-A-Hinfty} is devoted to the proof of
Theorem~\ref{A-Hinfty}.  The proof of this theorem uses
Theorem~\ref{compact-operator}, which is proved in
Section~\ref{proof-compact-operator}.  To prove
Theorem~\ref{compact-operator}, we need to use some facts about weakly
singular integral operators, which are given in
Section~\ref{weakly-singular-integral}.  In Section~\ref{weak-star},
we deal with the weak*-closedness of the algebras that we are
treating.  In Section~\ref{glued-subalgebras} we consider finite
codimensional subalgebras of a particular kind which we call ``glued
subalgebras''.  These will be a key tool in the sequel.
Section~\ref{more-proofs} contains the proofs of
Theorems~\ref{analytic-curve} and~\ref{extension}.  Finally, in
Section~\ref{continuous-families} we give some lemmas regarding
families of functions $\Phi_\varepsilon$ that depend continuously on a
parameter $\varepsilon$.  These results are not used elsewhere in this
article, but they will be essential in~\cite{Article2}.  We place them
here, because their proof uses the details of the proof of
Theorem~\ref{compact-operator}.

\section{The proof of Theorem~\ref{A-Hinfty} (modulo
  Theorem~\ref{compact-operator})}
\label{proof-A-Hinfty}

In this section we prove Theorem~\ref{A-Hinfty}.  This requires
Theorem~\ref{compact-operator}, which was stated above and is proved
in Section~\ref{proof-compact-operator}.  As a first step, we obtain
Lemma~\ref{finite-codim}, stated in the Introduction, which is a
simple consequence of Theorem~\ref{compact-operator}.

\begin{proof}[of Lemma~\ref{finite-codim}]
  By Theorem~\ref{compact-operator} and the standard theory of
  Fredholm operators, the range of the operator $f \mapsto \sum F_k(f)
  \circ \psi_k$, $f \in H^\infty(\Omega)$, is a closed subspace of
  finite codimension in $H^\infty(\Omega)$.  Since $\mathcal{H}_\Phi$
  contains this range, we get that $\mathcal{H}_\Phi$ is a closed
  subalgebra of finite codimension in $H^\infty(\Omega)$.  To obtain
  the analogous result for $\mathcal{A}_\Phi$, we just consider the
  restriction of the operator $f \mapsto \sum F_k(f) \circ \psi_k$ to
  $A(\overline{\Omega})$.
\end{proof}

The proof of Theorem~\ref{A-Hinfty} follows essentially from
Lemma~\ref{finite-codim}, together with the application of some Banach
algebra techniques.  We now recall some basic facts about the maximal
ideal space of $H^\infty(\Omega)$ which are used in the proof.  We
refer to~\cite{Hoffman}*{Chapter 10} for a detailed discussion of the
Banach algebra structure of $H^\infty(\D)$.  The properties of
$H^\infty(\Omega)$ for a finitely connected domain $\Omega$ are
similar.  We denote by $\M(H^\infty(\Omega))$ the space of all complex
homomorphisms on $H^\infty(\Omega)$, endowed with the weak* topology
inherited as a subspace of the dual space $(H^\infty(\Omega))^*$.  It
is a compact Hausdorff space.

We denote by $\z$ the identity function on $\Omega$, i.e., $\z(z) =
z$.  For any complex homomorphism $\psi \in \M(H^\infty(\Omega))$,
either $\psi(\z) \in \Omega$ or $\psi(\z) \in
\partial\Omega$.  If $\psi(\z) = z_0 \in \Omega$, then $\psi(f) =
f(z_0)$ for every $f \in H^\infty(\Omega)$.  If $\psi(\z) = z_0 \in
\partial\Omega$, then we can assert that $\psi(f) = f(z_0)$
for every $f \in H^\infty(\Omega)$ that extends by continuity to $z_0$
(the proof for $\Omega = \D$, given in~\cite{Hoffman}, easily adapts
to any finitely connected domain).

A linear functional $\eta \in (H^\infty(\Omega))^*$, it is called a
\emph{derivation} at $\psi \in \M(H^\infty(\Omega))$ if
\begin{equation*}
  \eta(fg) = \eta(f)\psi(g) + \psi(f)\eta(g),\qquad \forall f,g\in
  H^\infty(\Omega).
\end{equation*}
It is easy to see that if $\eta$ is a derivation at $\psi$ with
$\psi(\z) = z_0 \in \Omega$, then $\eta(f) = \eta(\z)f'(z_0)$ for
every $f \in H^\infty(\Omega)$ (one must check first that $\eta(1) =
0$ and then write $f = f(z_0) + f'(z_0)(\z-z_0) + (\z-z_0)^2g$ with $g
\in H^\infty(\Omega)$).  Derivations at $\psi$ with
$\psi(\z)\in\partial\Omega$ have the following somewhat similar
property.

\begin{lemma}
  \label{derivation-T}
  Let $f \in H^\infty(\Omega)$ be continuous at $z_0 \in
  \partial\Omega$ with $(f - f(z_0))/(\z - z_0) \in
  H^\infty(\Omega)$.  If $\eta$ is a derivation in $H^\infty(\Omega)$
  at $\psi \in \M(H^\infty(\Omega))$ with $\psi(\z) = z_0$, then
  \begin{equation*}
    \eta(f) = \eta(\z) \psi\Big(\frac{f-f(z_0)}{\z - z_0}\Big).
  \end{equation*}
\end{lemma}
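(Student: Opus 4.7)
The plan is to imitate the argument sketched in the paragraph immediately preceding the lemma, where derivations at interior points are analyzed. The key observation is that the hypothesis $(f - f(z_0))/(\z - z_0) \in H^\infty(\Omega)$ provides exactly the analogue of the Taylor expansion $f = f(z_0) + f'(z_0)(\z - z_0) + (\z - z_0)^2 g$ used in the interior case; here, however, one cannot separate a linear term and a higher-order term, so we simply write $f = f(z_0) + (\z - z_0) g$ with $g := (f - f(z_0))/(\z - z_0) \in H^\infty(\Omega)$.

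First, I would verify that $\eta$ annihilates constants. Applying the Leibniz rule to $1 = 1 \cdot 1$ and using $\psi(1) = 1$ yields $\eta(1) = 2\eta(1)$, so $\eta(1) = 0$, and by linearity $\eta(c) = 0$ for every $c \in \C$. In particular $\eta(f(z_0)) = 0$ and $\eta(\z - z_0) = \eta(\z)$.

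Next, I would apply $\eta$ to the decomposition $f = f(z_0) + (\z - z_0) g$. The derivation property gives
\begin{equation*}
  \eta(f) = \eta((\z - z_0)g) = \eta(\z - z_0)\,\psi(g) + \psi(\z - z_0)\,\eta(g).
\end{equation*}
Since $\psi(\z) = z_0$ and $\psi$ is a unital homomorphism, $\psi(\z - z_0) = 0$, so the second term vanishes. Combined with $\eta(\z - z_0) = \eta(\z)$, this gives exactly $\eta(f) = \eta(\z)\,\psi(g)$, which is the claimed formula.

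There is really no hard step here; the only point that requires a moment of care is making sure the factor $g$ actually lies in $H^\infty(\Omega)$ so that $\psi(g)$ is defined, but this is given by hypothesis. The rest is bookkeeping with the Leibniz rule and the identities $\psi(1) = 1$ and $\psi(\z) = z_0$.
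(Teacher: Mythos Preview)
Your proof is correct and follows exactly the same approach as the paper: write $f - f(z_0) = (\z - z_0)g$ and apply the Leibniz rule, using $\psi(\z - z_0) = 0$ and $\eta(\z - z_0) = \eta(\z)$. The paper's version is slightly terser, omitting the explicit verification that $\eta(1)=0$, but the argument is the same.
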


\begin{proof}
  We just compute
  \begin{equation*}
    \begin{split}
      \eta(f) &= \eta(f - f(z_0)) =
      \eta\Big((\z-z_0)\frac{f-f(z_0)}{\z-z_0}\Big) \\ &= \psi(\z-z_0)
      \eta\Big(\frac{f-f(z_0)}{\z-z_0}\Big) + \eta(\z-z_0)
      \psi\Big(\frac{f-f(z_0)}{\z-z_0}\Big) = \eta(\z)
      \psi\Big(\frac{f-f(z_0)}{\z-z_0}\Big).
    \end{split}
  \end{equation*}
\end{proof}

\begin{lemma}
  \label{lemma-glued}
  If $\Phi:\overline{\Omega}\to\overline{\D}^n$ is admissible and
  $\psi_1\neq\psi_2$ are in $\M(H^\infty(\Omega))$ and satisfy
  $\psi_1(f) = \psi_2(f)$ for every $f\in \mathcal{H}_\Phi$, then
  $\psi_1(\z),\psi_2(\z)\in\Omega$, $\psi_1(\z) \neq \psi_2(\z)$ and
  $\Phi(\psi_1(\z)) = \Phi(\psi_2(\z))$.  The same is true if
  $H^\infty(\Omega)$ is replaced by $A(\overline{\Omega})$ and
  $\mathcal{H}_\Phi$ is replaced by $\mathcal{A}_\Phi$.
\end{lemma}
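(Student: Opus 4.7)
The plan is to analyze the points $z_i := \psi_i(\z) \in \overline{\Omega}$ in three steps: first deduce $\Phi(z_1) = \Phi(z_2)$, then rule out $z_i \in \partial\Omega$, and finally conclude distinctness.

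The first step is quick: each $\varphi_k$ lies in $A(\overline{\Omega}) \cap \mathcal{H}_\Phi$, so by hypothesis $\psi_1(\varphi_k) = \psi_2(\varphi_k)$, and since any complex homomorphism of $H^\infty(\Omega)$ evaluates functions continuous at $\psi(\z)$ by point evaluation, this gives $\varphi_k(z_1) = \varphi_k(z_2)$ for each $k$, i.e.\ $\Phi(z_1) = \Phi(z_2)$.

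The second step is the heart of the argument. Suppose $z_1 \in \partial\Omega$. By admissibility condition (a), $z_1$ lies in some $J_k$, and by (b), $|\varphi_k(z_1)| = 1$; combined with the first step, $|\varphi_k(z_2)| = 1$ as well. Since condition (e) precludes $\varphi_k$ from being constant, the maximum modulus principle in the connected domain $\Omega$ excludes $z_2 \in \Omega$; so $z_2 \in \partial\Omega$, and condition (f) (injectivity of $\varphi_k|J_k$ together with disjointness of $\varphi_k(J_k)$ from $\varphi_k(\partial\Omega \setminus J_k)$) forces $z_2 = z_1 \in J_k$. To contradict $\psi_1 \neq \psi_2$ at this collision point, I invoke Theorem~\ref{compact-operator}: writing $f = \sum_{k=1}^n F_k(f) \circ \varphi_k + g_f$ with $g_f \in A(\overline{\Omega})$, the first summand lies in $\mathcal{H}_\Phi$ so $\psi_1$ and $\psi_2$ agree on it, while $g_f$ is continuous at $z_1$ and therefore yields $\psi_i(g_f) = g_f(z_1)$ for both $i$. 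Hence $\psi_1(f) = \psi_2(f)$ for every $f$, contradicting the hypothesis; so $z_1, z_2 \in \Omega$.

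With $z_1, z_2 \in \Omega$ secured, point evaluation gives $\psi_i(f) = f(z_i)$, so $\psi_1 \neq \psi_2$ forces $z_1 \neq z_2$. The $A(\overline{\Omega})$ version runs identically, with the simplification that functions in $A(\overline{\Omega})$ are automatically continuous on all of $\overline{\Omega}$, so the decomposition via Theorem~\ref{compact-operator} is not needed in the boundary collision case. The main obstacle is precisely that boundary collision: distinct homomorphisms genuinely populate the fiber of $\M(H^\infty(\Omega))$ over a boundary point, so reaching $\psi_1 = \psi_2$ there requires a decomposition of every $f \in H^\infty(\Omega)$ modulo $\mathcal{H}_\Phi$ into a function continuous up to $\partial\Omega$; this is exactly what Theorem~\ref{compact-operator} supplies, and it would not follow merely from the finite codimension asserted by Lemma~\ref{finite-codim}.
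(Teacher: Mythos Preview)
Your proof is correct and follows essentially the same approach as the paper: deduce $\Phi(z_1)=\Phi(z_2)$ from $\varphi_k\in\mathcal{H}_\Phi\cap A(\overline{\Omega})$, then in the boundary case use Theorem~\ref{compact-operator} to write any $f\in H^\infty(\Omega)$ as a sum of a function in $\mathcal{H}_\Phi$ and one in $A(\overline{\Omega})$, forcing $\psi_1=\psi_2$. Your argument is in fact slightly more explicit than the paper's at the step ``$z_1\in\partial\Omega$ implies $z_2=z_1$'': you spell out the maximum modulus step to get $z_2\in\partial\Omega$ before invoking condition~(f), whereas the paper cites (f) directly.
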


\begin{proof}
  Since by assumption $\varphi_k \in A(\overline{\Omega})$, we have
  $\psi_j(\varphi_k) = \varphi_k(\psi_j(\z))$ for $j=1,2$, $k =
  1,\ldots,n$.  Therefore, $\Phi(\psi_1(\z)) = \Phi(\psi_2(\z))$,
  because the functions $\varphi_k$ belong to $\mathcal{H}_\Phi$.  Let
  $z_j = \psi_j(\z) \in \overline{\Omega}$.  If $z_1\in
  \partial\Omega$, then by condition (f), $z_2=z_1$,
  and hence $\psi_1(f) = \psi_2(f)$ for all $f\in
  A(\overline{\Omega})$.  Take an $f \in H^\infty(\Omega)$ and put $g
  = \sum_{k=1}^N F_k(f)\circ\varphi_k$, where $F_k$ are as in
  Theorem~\ref{compact-operator}. Then $f-g\in A(\overline{\Omega})$
  and $g \in \mathcal{H}_\Phi$.  Therefore, we have
  $\psi_1(f-g)=\psi_2(f-g)$, and also $\psi_1(g)=\psi_2(g)$.  It
  follows that $\psi_1(f) = \psi_2(f)$, so that $\psi_1 = \psi_2$,
  because $f$ was arbitrary. This contradicts our assumption.  Hence
  $\psi_j(\z) \in \Omega$ for $j=1,2$ and, since $\psi_1\neq \psi_2$,
  it must happen that $\psi_1(\z) \neq \psi_2(\z)$. The reasoning for
  $A(\overline{\Omega})$ is the same.
\end{proof}

\begin{lemma}
  \label{lemma-zero}
  If $\Phi:\overline{\Omega}\to\overline{\D}^n$ is admissible and
  $\eta \neq 0$ is a derivation of $H^\infty(\Omega)$ at
  $\psi\in\M(H^\infty(\Omega))$ such that $\eta(f) = 0$ for every
  $f\in\mathcal{H}_\Phi$, then $\psi(\z) \in \Omega$ and
  $\Phi'(\psi(\z)) = 0$.  The same is true if $H^\infty(\Omega)$ is
  replaced by $A(\overline{\Omega})$ and $\mathcal{H}_\Phi$ is
  replaced by $\mathcal{A}_\Phi$.
\end{lemma}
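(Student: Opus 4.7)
The plan is to split the argument based on where $z_0 := \psi(\z)$ sits. If $z_0 \in \Omega$, I will invoke the formula $\eta(f) = \eta(\z)\,f'(z_0)$ for interior-point derivations, recalled in the text just before Lemma~\ref{derivation-T}. Applying it to $\varphi_k \in \mathcal{H}_\Phi$ gives $\eta(\z)\,\varphi_k'(z_0) = 0$ for every $k$. Since $\eta \neq 0$ forces $\eta(\z) \neq 0$, each $\varphi_k'(z_0) = 0$, i.e., $\Phi'(z_0) = 0$, which gives the conclusion.

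The remaining case $z_0 \in \partial\Omega$ must be excluded. I first deduce $\eta(\z) = 0$: pick $k$ with $z_0 \in J_k$ (possible by~(a)), and observe using (c)--(e) that $\varphi_k'$ extends continuously to $z_0$ with $\varphi_k'(z_0) \neq 0$. The quotient $h := (\varphi_k - \varphi_k(z_0))/(\z - z_0)$ is then bounded holomorphic on $\Omega$ and continuous up to $\overline{\Omega}$, so $h \in A(\overline{\Omega}) \subset H^\infty(\Omega)$ with $h(z_0) = \varphi_k'(z_0)$. Lemma~\ref{derivation-T} now reads $0 = \eta(\varphi_k) = \eta(\z)\,\psi(h) = \eta(\z)\,\varphi_k'(z_0)$, so $\eta(\z) = 0$.

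I then bootstrap this to $\eta \equiv 0$, which will contradict $\eta \neq 0$. From the derivation property and $\eta(\z) = 0$ an induction gives $\eta(p(\z)) = 0$ for every polynomial $p$, and differentiating the identity $(\z - \alpha)^{-1}(\z - \alpha) \equiv 1$ via $\eta$ yields $\eta((\z - \alpha)^{-1}) = 0$ whenever $\alpha \notin \overline{\Omega}$. Hence $\eta$ annihilates the algebra generated by $\z$ and by the functions $(\z - \alpha_j)^{-1}$ for a finite set $\{\alpha_j\}$ containing one point in each bounded component of $\C \setminus \overline{\Omega}$. Mergelyan's theorem, applicable since $\overline{\Omega}$ is a finitely connected compact set, makes this algebra uniformly dense in $A(\overline{\Omega})$, and the boundedness of $\eta$ extends the vanishing to all of $A(\overline{\Omega})$. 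Finally, Theorem~\ref{compact-operator} decomposes any $f \in H^\infty(\Omega)$ as $f = \sum_k F_k(f)\circ\varphi_k + g$ with the sum in $\mathcal{H}_\Phi$ and $g \in A(\overline{\Omega})$, both in $\ker\eta$, so $\eta(f) = 0$. The $A(\overline{\Omega})$ version of the lemma is obtained by running the same argument with $\mathcal{A}_\Phi$ in place of $\mathcal{H}_\Phi$, since every intermediate object already lies in $A(\overline{\Omega})$. I expect the main difficulty to be this last bootstrap step: upgrading from $\eta(\z) = 0$ to $\eta|_{A(\overline{\Omega})} = 0$ requires the Mergelyan-type density, and the passage from $A(\overline{\Omega})$ to $H^\infty(\Omega)$ requires the decomposition supplied by Theorem~\ref{compact-operator}.
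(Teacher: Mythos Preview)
Your proof is correct and follows essentially the same route as the paper's: interior case via the formula $\eta(f)=\eta(\z)f'(z_0)$, boundary case via Lemma~\ref{derivation-T} and condition~(e) to force $\eta(\z)=0$, then bootstrap to $\eta|_{A(\overline{\Omega})}=0$ by density and finally to $\eta\equiv 0$ using the decomposition from Theorem~\ref{compact-operator}. The only cosmetic difference is in the bootstrap step: the paper applies Lemma~\ref{derivation-T} once more (with $\eta(\z)=0$) to kill $\eta$ on all functions analytic in a neighborhood of $\overline{\Omega}$ and then invokes their density in $A(\overline{\Omega})$, whereas you work algebraically on polynomials and resolvents and then invoke the rational version of Mergelyan---same idea, slightly different packaging.
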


\begin{proof}
  We consider two cases according to whether $\psi(\z)$ belongs to
  $\Omega$ or to $\partial\Omega$.  The case when $\psi(\z) \in
  \Omega$ is clear: since $\varphi_k \in \mathcal{H}_\Phi$, we have $0
  = \eta(\varphi_k) = \varphi_k'(\psi(\z))$, and so $\Phi'(\psi(\z)) =
  0$.

  Now we show that the case $\psi(\z) \in \partial\Omega$ cannot
  happen.  Here we also distinguish two cases according to whether
  $\eta(\z)$ is zero or not.  If $\eta(\z) \neq 0$, then we take $k
  \in \{1,\ldots,n\}$ such that $\psi(\z) \in J_k$.  Since $\varphi_k$
  is derivable at $\psi(\z)$, we have $\eta(\varphi_k) =
  \eta(\z)\varphi_k'(\psi(\z))$ by Lemma~\ref{derivation-T}.
  Therefore, $\varphi_k'(\psi(\z)) = 0$, because $\varphi_k \in
  \mathcal{H}_\Phi$.  This contradicts condition (e) in the definition
  of an admissible family.

  In the case when $\eta(\z) = 0$, we get $\eta(f) = 0$ for every $f$
  analytic on some neighborhood of $\overline{\Omega}$.  This implies
  $\eta(f) = 0$ for every $f \in A(\overline{\Omega})$, because
  functions analytic on $\overline{\Omega}$ are dense in
  $A(\overline{\Omega})$.  Now take $f \in H^\infty(\Omega)$ and put
  $g = \sum_{k=1}^nF_k(f) \circ \varphi_k$, where $F_k$ are as in
  Theorem~\ref{compact-operator}.  Then $f-g \in A(\overline{\Omega})$
  and $g \in \mathcal{H}_\Phi$.  This implies that $0 = \eta(g) =
  \eta(g) + \eta(f-g) = \eta(f)$.  Therefore, $\eta = 0$, a
  contradiction.

  The proof for $A(\overline{\Omega})$ follows similar steps, and is
  indeed even easier.
\end{proof}

We are now ready to give the proof of Theorem~\ref{A-Hinfty}.

\begin{proof}[of Theorem~\ref{A-Hinfty}]
  We first show that $\mathcal{H}_\Phi = H^\infty(\Omega)$.  By
  Lemma~\ref{finite-codim}, $\mathcal{H}_\Phi$ is a closed unital
  subalgebra of $H^\infty(\Omega)$ of finite codimension.  Let us
  assume by way of contradiction that $\mathcal{H}_\Phi \neq
  H^\infty(\Omega)$.  Gorin proves in~\cite{Gorin} that every proper
  subalgebra of a commutative complex algebra is contained in a
  subalgebra of codimension one.  Therefore, there exists a closed
  unital subalgebra $A$ of $H^\infty(\Omega)$ of codimension one such
  that $\mathcal{H}_\Phi \subset A$.

  We use the classification of the closed unital subalgebras of
  codimension one in a Banach algebra, which also is given
  in~\cite{Gorin}.  According to this classification, $A$ must have
  one of the following two possible forms:

  \begin{enumerate}[(a)]
  \item $A = \{f \in H^\infty : \psi_1(f) = \psi_2(f)\}$, for some
    $\psi_1,\psi_2 \in \M(H^\infty(\Omega))$, $\psi_1 \neq \psi_2$.

  \item $A = \ker \eta$, where $\eta \neq 0$ is a derivation at some
    $\psi \in \M(H^\infty(\Omega))$.
  \end{enumerate}

  We show that each of these two cases leads to a contradiction.  In
  the case (a), Lemma~\ref{lemma-glued} shows that $\psi_1(\z)\neq
  \psi_2(\z) \in \Omega$, yet $\Phi(\psi_1(\z)) = \Phi(\psi_2(\z))$.
  Since $\Phi$ is injective, we get a contradiction.

  In the case (b), Lemma~\ref{lemma-zero} shows that $\psi(\z) \in
  \Omega$ and $\Phi'(\psi(\z)) = 0$.  This is a contradiction, because
  $\Phi'$ does not vanish in $\Omega$.

  The proof of the equality $\mathcal{A}_\Phi = A(\overline{\Omega})$
  is identical.
\end{proof}

A few comments about the proof of Theorem~\ref{A-Hinfty} are in order.
The first is about the classification of the derivations of
$H^\infty(\Omega)$.  We treat the case $\Omega = \D$, since the case
of a finitely connected domain $\Omega$ is similar.  We have already
described the derivations at points $\psi \in \M(H^\infty(\D))$ such
that $\psi(\z) \in \D$ and given some properties about those
derivations such that $\psi(\z) \in \T$.  The first question is
whether there exists any such (non-zero) derivations ``supported on
$\T$'', and whether the case of a derivation $\eta$ such that
$\eta(\z)=0$ but $\eta\neq 0$ that appeared in the proof of the
theorem can really happen.

It is important to remark the existence of analytic disks inside each
of the fibers of $\M(H^\infty(\D))$ that project into $\T$ under the
map $\psi \mapsto \psi(\z)$ (once again, we refer the reader to
\cite{Hoffman}).  Thus there are maps of the form $\Psi_{\cdot} : \D
\to \M(H^\infty(\D))$ such that for every $\lambda \in \D$ the point
$\Psi_\lambda \in \M(H^\infty(\D))$ lies in the same fiber (i.e.,
$\Psi_\lambda(\z)$ is constant in $\lambda$) and such that the map
$f(\lambda) \mapsto \Psi_\lambda(f)$ is an algebra homomorphism of
$H^\infty(\D)$ onto $H^\infty(\D)$.  This map $\Psi$ endows its image
$\mathcal D$ in $\M(H^\infty(\D))$ with an analytic structure.  The
complex derivative according to the analytic structure of $\mathcal D$
gives a (non-zero) derivation at each of the points in $\mathcal D$
(explicitly, these are maps $f \mapsto (d/d\lambda)
|_{\lambda=\lambda_0} \Psi_\lambda(f)$).  Clearly, if $\eta$ is one of
these derivations, then $\eta(\z)=0$, because $\z$ is constant on each
of the fibers over $\T$, and hence in $\mathcal D$.  However, we do
not know whether these derivations are (up to a constant multiple) the
only ones that exist over points of $\mathcal D$, or whether there
exist (non-zero) derivations on points which do not belong to such
analytic disks.  It seems that there is not much information about the
classification of the derivations of $H^\infty(\D)$ in the literature.

Another comment is that one could use the results of
Section~\ref{weak-star} to simplify somewhat the proof of the
Theorem~\ref{A-Hinfty}.  If we know that the algebra $A$ is
weak*-closed, then we only need to consider weak*-continuous complex
homomorphisms and derivations.

\section{Some lemmas about weakly singular integral operators}
\label{weakly-singular-integral}

\begin{definition*}
  We say that a domain $\Omega \subset \C$ satisfies the inner
  chord-arc condition if there is a constant $C > 0$ depending only on
  $\Omega$ such that for every $\zeta,z \in \overline{\Omega}$ there
  is a piecewise smooth curve $\gamma(\zeta,z)$ which joins $\zeta$
  and $z$, is contained in $\Omega$ except for its endpoints, and
  whose length is smaller or equal than $C|\zeta-z|$.
\end{definition*}

\begin{lemma}
  \label{weak-singularity}
  Let $U \subset \C$ be a domain satisfying the inner chord-arc
  condition, $\varphi \in \A(\overline{U})$ with $\varphi'$ of class
  H\"older $\alpha$, $0 < \alpha \leq 1$ in $U$ $($so that $\varphi'$
  extends to $\overline{U}$ by continuity$)$.  Let $K \subset
  \overline{U}$ be compact and $\Omega \subset U$ be a domain. Assume
  that $\varphi(\zeta) \neq \varphi(z)$ if $\zeta \in K$ and $z \in
  \overline{\Omega}\setminus\{\zeta\}$, and that $\varphi'$ does not
  vanish in $K$.

  Then, the function
  \begin{equation*}
    G(\zeta,z) = \frac{\varphi'(\zeta)}{\varphi(\zeta)-\varphi(z)} -
    \frac{1}{\zeta - z}
  \end{equation*}
  satisfies
  \begin{equation*}
    |G(\zeta,z)| \leq C|\zeta - z|^{\alpha-1}, \qquad \zeta \in K,\ z
    \in \overline{\Omega}\setminus\{\zeta\}.
  \end{equation*}
\end{lemma}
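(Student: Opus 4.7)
The plan is to write $G(\zeta,z)$ as a single fraction,
\[
  G(\zeta,z) = \frac{\varphi'(\zeta)(\zeta-z) - (\varphi(\zeta)-\varphi(z))}{(\varphi(\zeta)-\varphi(z))(\zeta-z)},
\]
and then bound the numerator from above by $C|\zeta-z|^{1+\alpha}$ while bounding $|\varphi(\zeta)-\varphi(z)|$ from below by $c|\zeta-z|$ uniformly for $\zeta\in K$ and $z\in\overline\Omega\setminus\{\zeta\}$. These two estimates together yield $|G(\zeta,z)|\le (C/c)\,|\zeta-z|^{\alpha-1}$.

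For the numerator, I would use the inner chord-arc curve $\gamma=\gamma(\zeta,z)\subset\Omega\cup\{\zeta,z\}\subset U$ provided by the hypothesis on $U$. Since $\int_\gamma dw = \zeta - z$ and $\int_\gamma \varphi'(w)\,dw = \varphi(\zeta)-\varphi(z)$, one has the identity
\[
  \varphi'(\zeta)(\zeta-z) - (\varphi(\zeta)-\varphi(z)) = \int_\gamma \bigl(\varphi'(\zeta)-\varphi'(w)\bigr)\,dw.
\]
For every $w\in\gamma$ the chord-arc bound gives $|\zeta-w|\le \mathrm{length}(\gamma)\le C|\zeta-z|$, so the H\"older hypothesis on $\varphi'$ yields $|\varphi'(\zeta)-\varphi'(w)|\le C'|\zeta-z|^\alpha$. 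Integrating and using $\mathrm{length}(\gamma)\le C|\zeta-z|$ once more gives the desired upper bound $C''|\zeta-z|^{1+\alpha}$ on the numerator.

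For the lower bound on the denominator, the step I expect to be the main obstacle, I would argue by compactness. Suppose there were sequences $\zeta_n\in K$, $z_n\in\overline\Omega$ with $\zeta_n\ne z_n$ and $|\varphi(\zeta_n)-\varphi(z_n)|/|\zeta_n-z_n|\to 0$. Passing to subsequences, $\zeta_n\to\zeta^*\in K$ and $z_n\to z^*\in\overline\Omega$. If $\zeta^*\ne z^*$, continuity gives the limit $(\varphi(\zeta^*)-\varphi(z^*))/(\zeta^*-z^*)$, which is nonzero by the injectivity hypothesis $\varphi(\zeta)\ne\varphi(z)$ on $K\times(\overline\Omega\setminus\{\zeta\})$. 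If $\zeta^*=z^*$, one applies the identity above along the chord-arc path $\gamma_n=\gamma(\zeta_n,z_n)$ with $\varphi'(\zeta^*)$ in place of $\varphi'(\zeta)$, obtaining
\[
  \left|\frac{\varphi(\zeta_n)-\varphi(z_n)}{\zeta_n-z_n}-\varphi'(\zeta^*)\right| \le \frac{\mathrm{length}(\gamma_n)}{|\zeta_n-z_n|}\,\sup_{w\in\gamma_n}|\varphi'(w)-\varphi'(\zeta^*)|,
\]
whose right-hand side tends to $0$ by continuity of $\varphi'$ at $\zeta^*$ and the uniform bound $\mathrm{length}(\gamma_n)\le C|\zeta_n-z_n|$. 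The limit is therefore $\varphi'(\zeta^*)$, which is nonzero since $\varphi'$ does not vanish on $K$. In either case we reach a contradiction, establishing the required uniform lower bound $|\varphi(\zeta)-\varphi(z)|\ge c|\zeta-z|$. Combining this with the numerator estimate completes the proof.
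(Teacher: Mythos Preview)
Your proof is correct and follows essentially the same approach as the paper: write $G$ as a single fraction, bound the numerator by $C|\zeta-z|^{1+\alpha}$ via the integral representation along the chord-arc curve, and bound $|\varphi(\zeta)-\varphi(z)|$ below by $c|\zeta-z|$. The only cosmetic difference is in the lower bound: the paper defines $h(\zeta,z)$ equal to the difference quotient off the diagonal and to $\varphi'(\zeta)$ on it, asserts this is continuous and nonvanishing on the compact set $K\times\overline{\Omega}$, and concludes $|h|\ge C_1>0$ directly; your sequential contradiction argument is exactly what verifies the continuity of $h$ at the diagonal that the paper takes for granted.
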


\begin{proof}
  Let us first check that
  \begin{equation}
    \label{eq:lemma4-*}
    \left|\frac{\varphi(\zeta) - \varphi(z)}{\zeta - z}\right| \geq
    C_1 > 0,
    \qquad
    \zeta \in K,\ z \in \overline{\Omega}\setminus\{\zeta\}.
  \end{equation}
  To see this, put
  \begin{equation*}
    h(\zeta,z) =
    \begin{cases}
      \frac{\varphi(\zeta) - \varphi(z)}{\zeta - z}, & \text{if }
      \zeta \in K, z \in
      \overline{\Omega}\setminus\{\zeta\},\\
      \varphi'(\zeta), & \text{if } \zeta = z \in K.
    \end{cases}
  \end{equation*}
  Since $h$ is continuous on the compact set $K \times
  \overline{\Omega}$ and does not vanish, we get $|h| \geq C_1 > 0$,
  which implies~\eqref{eq:lemma4-*}.

  If $\zeta \in K$ and $z \in \overline{\Omega}\setminus\{\zeta\}$,
  let $\gamma(\zeta,z) \subset U$ be an arc joining $\zeta$ and $z$
  and whose length is comparable to $|\zeta-z|$. Then
  \begin{equation}
    \label{eq:lemma4-**}
    \begin{split}
      |\varphi(z) - \varphi(\zeta) - \varphi'(\zeta)(z-\zeta)| &=
      \bigg|\int_{\gamma(\zeta,z)} \big(\varphi'(u) -
      \varphi'(\zeta)\big)\,du\bigg| \leq
      C_2\int_{\gamma(\zeta,z)}|u-\zeta|^\alpha|du| \\
      &\leq C_3|z-\zeta|^{\alpha+1}.
    \end{split}
  \end{equation}
  Using~\eqref{eq:lemma4-*} and~\eqref{eq:lemma4-**}, we get with $C =
  C_3/C_1$,
  \begin{equation*}
    |G(\zeta,z)| = \frac{|\varphi(z) - \varphi(\zeta) -
      \varphi'(\zeta)(z-\zeta)|}{|\varphi(\zeta)-\varphi(z)||\zeta-z|}
    \leq C|\zeta-z|^{\alpha-1},
  \end{equation*}
  which proves the lemma.
\end{proof}

The following lemma on the compactness of weakly singular integral
operators may be well know to specialists.  It appears throughout the
literature in different forms.  The one given here is similar to that
in~\cite{Kress}*{Theorem~2.22}, and it can be proved in the same way.
Hence, we omit the proof.

\begin{lemma}
  \label{weak-singular-integral}
  Let $\Omega \subset \C$ be bounded domain, $K \subset \C$ a compact
  piecewise smooth curve, and $G(\zeta,z)$ continuous in $(K \times
  \overline{\Omega})\setminus\{(\zeta,\zeta): \zeta \in K\}$ with
  $|G(\zeta,z)| \leq C|\zeta-z|^{-\beta}$ for some $\beta < 1$ and
  every $\zeta \in K$, $z\in \overline{\Omega}\setminus\{\zeta\}$.

  Then the operator
  \begin{equation}
    \label{eq:integral-operator}
    (T\psi)(z)=\int_K G(\zeta,z)\psi(\zeta)d\zeta
  \end{equation}
  defines a compact operator $T: L^\infty(K) \to
  \Cont(\overline{\Omega})$.
\end{lemma}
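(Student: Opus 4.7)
The plan is to reduce compactness of $T$ to that of operators with continuous kernels by cutting off $G$ away from the diagonal. I would proceed in three stages: (i) verify that $T$ is a bounded operator $L^\infty(K)\to\Cont(\overline{\Omega})$; (ii) verify that the truncation $T_\varepsilon$ obtained by excising an $\varepsilon$-neighborhood of the diagonal from the kernel is compact by Arzel\`a--Ascoli; and (iii) verify that $T_\varepsilon\to T$ in operator norm, so that $T$ is compact as a norm limit of compact operators.

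The key analytic input is the uniform length estimate
\begin{equation*}
\sup_{z\in\overline{\Omega}}\int_{K\cap \D_r(z)} |\zeta-z|^{-\beta}\,|d\zeta|\le C\,r^{1-\beta},
\end{equation*}
which holds because $\beta<1$ and $K$ is a compact piecewise smooth curve: parametrizing each smooth piece by arc length and using a finite subcover, the one-dimensional Hausdorff measure of $K\cap\D_r(z)$ is at most a constant multiple of $r$, uniformly in $z\in\overline{\Omega}$. Specializing to $r=\operatorname{diam}(K)$ gives $\|T\psi\|_\infty\le M\|\psi\|_\infty$; continuity of $T\psi$ at any $z_0\in\overline{\Omega}$ then follows by splitting the integral over $K\cap\D_\varepsilon(z_0)$ and its complement, dominating the first piece via the above estimate and handling the second by dominated convergence, which applies because $G$ is continuous off the diagonal.

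For the truncation, pick a continuous cutoff $\chi_\varepsilon\colon\C\times\C\to[0,1]$ with $\chi_\varepsilon(\zeta,z)=0$ for $|\zeta-z|\le\varepsilon/2$ and $\chi_\varepsilon(\zeta,z)=1$ for $|\zeta-z|\ge\varepsilon$, and set $G_\varepsilon=\chi_\varepsilon G$. Since $G_\varepsilon$ is continuous on the compact set $K\times\overline{\Omega}$, hence uniformly continuous, the image under $T_\varepsilon$ of the closed unit ball of $L^\infty(K)$ is both uniformly bounded and equicontinuous on $\overline{\Omega}$. The Arzel\`a--Ascoli theorem, applied on the compact Hausdorff space $\overline{\Omega}$ (compact because $\Omega$ is bounded), then yields compactness of $T_\varepsilon$.

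Finally, since $T-T_\varepsilon$ has kernel supported in $\{|\zeta-z|\le\varepsilon\}$, the length estimate yields
\begin{equation*}
|(T-T_\varepsilon)\psi(z)|\le C\|\psi\|_\infty\int_{K\cap\D_\varepsilon(z)}|\zeta-z|^{-\beta}\,|d\zeta|\le C'\varepsilon^{1-\beta}\|\psi\|_\infty
\end{equation*}
uniformly in $z$, so $\|T-T_\varepsilon\|\to 0$ and $T$ is compact. The one genuine technical point is the uniform length estimate above; everything else is standard once it is in hand, and I expect that the piecewise smoothness of $K$, rather than mere rectifiability, is exactly what makes this estimate quantitatively controllable through a finite covering by arc-length parametrizations.
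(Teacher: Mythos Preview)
Your argument is correct and follows the standard route for weakly singular integral operators: truncate the kernel away from the diagonal, invoke Arzel\`a--Ascoli for the truncated operator with continuous kernel, and use the weak singularity $\beta<1$ together with the Ahlfors regularity of a compact piecewise smooth curve to show that the truncation error vanishes in operator norm. The paper itself does not give a proof of this lemma; it simply remarks that the result is well known and refers to Kress, \emph{Linear Integral Equations}, Theorem~2.22, whose proof proceeds along exactly the lines you outline. So your proposal matches the intended argument.
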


If $\Gamma \subset \C$ is a piecewise smooth closed Jordan arc $\psi
\in L^\infty(\Gamma)$, and $\varphi$ and $\varphi'$ are defined and
continuous in $\Gamma$, we define the modified Cauchy integral
\begin{equation*}
  \CauchyMod\psi\varphi \Gamma (z) = \int_\Gamma
  \frac{\varphi'(\zeta)}{\varphi(\zeta)-z} \psi(\zeta)\,d\zeta.
\end{equation*}
The function $\CauchyMod\psi \varphi \Gamma$ is analytic in $\C
\setminus \overline{\varphi(\Gamma)}$.  We write $\Cauchy \psi \Gamma$
for the usual Cauchy transform (i.e., when $\varphi(z) = z$).

\section{Proof of Theorem~\ref{compact-operator}}
\label{proof-compact-operator}

The following two lemmas are used in the proof of
Theorem~\ref{compact-operator}.

\begin{lemma}
  \label{diff-compact}
  Under the hypotheses of Theorem~\ref{compact-operator}, if $\Gamma$
  is a piecewise smooth closed arc contained in $\overline{\Omega}_k$,
  then the operator defined by
  \begin{equation}
    \label{eq:psimapsto}
    \psi \mapsto \CauchyMod{\psi}{\varphi_k}{\Gamma} \circ \varphi_k -
    \Cauchy{\psi}{\Gamma}
  \end{equation}
  maps $L^\infty(\Gamma)$ into $A(\overline{\Omega})$ and is compact.
\end{lemma}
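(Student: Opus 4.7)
The plan is to recognize the difference in \eqref{eq:psimapsto} as a single weakly singular integral with kernel given precisely by the function $G$ of Lemma \ref{weak-singularity}, and then to combine that lemma with Lemma \ref{weak-singular-integral} to deduce compactness into $\Cont(\overline{\Omega})$. Finally, I would upgrade continuous to $A(\overline{\Omega})$ by showing that the range consists of functions analytic on $\Omega$.

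More precisely, for $z \in \overline{\Omega}$ and $\psi \in L^\infty(\Gamma)$, I would write
\begin{equation*}
  \CauchyMod{\psi}{\varphi_k}{\Gamma}(\varphi_k(z)) -
  \Cauchy{\psi}{\Gamma}(z) = \int_\Gamma G(\zeta,z)\psi(\zeta)\,d\zeta,
  \qquad G(\zeta,z) = \frac{\varphi_k'(\zeta)}{\varphi_k(\zeta)-\varphi_k(z)}-\frac{1}{\zeta-z}.
\end{equation*}
I would then apply Lemma~\ref{weak-singularity} with $U = \Omega_k$ (which by hypothesis (c) admits the inner chord–arc property and on which $\varphi_k'$ is H\"older $\alpha$), $K = \Gamma$, and the domain of Lemma~\ref{weak-singularity} taken to be our $\Omega$. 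The remaining hypotheses of Lemma~\ref{weak-singularity}, namely that $\varphi_k$ separates points of $\Gamma$ from points of $\overline{\Omega}\setminus\{\zeta\}$ and that $\varphi_k'$ does not vanish on $\Gamma$, are the standing assumptions on the arcs $\Gamma$ used in the sequel (in practice $\Gamma$ is chosen close to $J_k$, so conditions (e) and (f) in the definition of admissibility supply them). This yields $|G(\zeta,z)| \leq C|\zeta-z|^{\alpha-1}$, so with $\beta = 1-\alpha < 1$, Lemma~\ref{weak-singular-integral} gives at once that the integral operator
\begin{equation*}
  (T\psi)(z) = \int_\Gamma G(\zeta,z)\psi(\zeta)\,d\zeta
\end{equation*}
defines a compact map $L^\infty(\Gamma) \to \Cont(\overline{\Omega})$.

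It then remains to check that $T\psi$ is analytic on $\Omega$. For $z \in \Omega$ with $z \notin \Gamma$ and $\varphi_k(z) \notin \varphi_k(\Gamma)$, both $\Cauchy{\psi}{\Gamma}$ and $\CauchyMod{\psi}{\varphi_k}{\Gamma}\circ \varphi_k$ are analytic at $z$, so $T\psi$ is analytic off a locally finite union of analytic arcs. Since $T\psi$ has already been shown continuous on $\overline{\Omega}$, Painlev\'e's removability theorem (applied across each of the smooth arcs $\Gamma$ and $\varphi_k^{-1}(\varphi_k(\Gamma))$, using that $\varphi_k'\neq 0$ on $\Gamma$) yields analyticity throughout $\Omega$. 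Alternatively, one may verify the jump cancellation directly: by the Plemelj--Sokhotski formulas, the jump of $\Cauchy{\psi}{\Gamma}$ across $\Gamma$ at a regular point $\zeta_0$ is $2\pi i\,\psi(\zeta_0)$; performing the substitution $w = \varphi_k(\zeta)$, $dw = \varphi_k'(\zeta)\,d\zeta$ shows that $\CauchyMod{\psi}{\varphi_k}{\Gamma}$ is the standard Cauchy transform of $\psi\circ\varphi_k^{-1}$ along $\varphi_k(\Gamma)$, whose jump at $\varphi_k(\zeta_0)$ is likewise $2\pi i\,\psi(\zeta_0)$, so the two jumps cancel along $\Gamma$.

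Since $A(\overline{\Omega})$ is a closed subspace of $\Cont(\overline{\Omega})$, combining the two conclusions shows that $T$ is compact as an operator $L^\infty(\Gamma) \to A(\overline{\Omega})$. The main technical obstacle is the first step: verifying the geometric hypotheses of Lemma~\ref{weak-singularity} (the inner chord–arc property of $\Omega_k$, injectivity of $\varphi_k$ on the pair $(\Gamma,\overline{\Omega})$, and non-vanishing of $\varphi_k'$ on $\Gamma$). Once the kernel is shown to satisfy $|G(\zeta,z)|\leq C|\zeta-z|^{\alpha-1}$, the rest of the proof is a routine application of the weakly singular integral machinery together with a removability argument.
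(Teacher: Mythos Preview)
Your proposal is correct and matches the paper's argument: write the difference as a single integral with the kernel $G$ of Lemma~\ref{weak-singularity}, apply that lemma with $U=\Omega_k$ to obtain the weak-singularity bound, and then invoke Lemma~\ref{weak-singular-integral} for compactness into $\Cont(\overline{\Omega})$. The paper disposes of analyticity in one phrase (``its image clearly consists of analytic functions''), which your Painlev\'e/jump-cancellation discussion makes rigorous; a slightly quicker route is to observe that the bound $|G(\zeta,z)|\le C|\zeta-z|^{\alpha-1}$ forces the only singularity of $G(\zeta,\cdot)$ at $z=\zeta$ to be removable, so Morera's theorem applies directly to the integral.
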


\begin{proof}
  We compute
  \begin{equation}
    \label{eq:lemma-simple-*}
    \CauchyMod{\psi}{\varphi_k}{\Gamma} \circ \varphi_k -
    \Cauchy{\psi}{\Gamma}
    =
    \int_{\Gamma} \left[\frac{\varphi_k'(\zeta)}{\varphi_k(\zeta) -
        \varphi_k(z)}
      - \frac{1}{\zeta-z}\right]\psi(\zeta)\,d\zeta.
  \end{equation}

  Using Lemma~\ref{weak-singularity} with $U=\Omega_k$, we have
  \begin{equation}
    \label{eq:wsing}
    \left|\frac{\varphi_k'(\zeta)}{\varphi_k(\zeta) - \varphi_k(z)}
      - \frac{1}{\zeta-z}\right| \leq C|\zeta-z|^{\alpha-1},
    \qquad
    \zeta \in \Gamma,\ z \in \overline{\Omega}\setminus\{\zeta\}.
  \end{equation}
  By Lemma~\ref{weak-singular-integral}, we see that the operator
  defined by~\eqref{eq:psimapsto} is compact from $L^\infty(\Gamma)$
  to $\Cont(\overline{\Omega})$.  Since its image clearly consists of
  analytic functions, the Lemma follows.
\end{proof}

\begin{lemma}
  \label{lemma-simple}
  Under the hypotheses of Theorem~\ref{compact-operator}, let
  $\widehat{J}_k$ be a closed arc contained the interior of $J_k$
  relative to $\partial \Omega$.  If $\psi \in
  L^\infty(\widehat{J}_k)$ and $\Cauchy \psi {\widehat{J}_k} \in
  H^\infty(\C\setminus\widehat{J}_k)$, then the modified Cauchy
  integral $\CauchyMod \psi {\varphi_k} {\widehat{J}_k}$ belongs to
  $H^\infty(\C\setminus\varphi_k(\widehat{J}_k))$.
\end{lemma}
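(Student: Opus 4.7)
The plan is to reduce the claim to a local bound near each point of $\widetilde{J}_k := \varphi_k(\widehat{J}_k)$. Since $\CauchyMod{\psi}{\varphi_k}{\widehat{J}_k}$ is analytic on $\C \setminus \widetilde{J}_k$, vanishes at infinity, and is trivially bounded (by length times sup divided by distance) on the complement of any open neighborhood of the compact set $\widetilde{J}_k$, it suffices to produce, for each $w_0 \in \widetilde{J}_k$, an open neighborhood $V_0$ of $w_0$ on which $\CauchyMod{\psi}{\varphi_k}{\widehat{J}_k}$ is bounded outside $\widetilde{J}_k$, and then cover $\widetilde{J}_k$ by finitely many such $V_0$.

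Fix $w_0 \in \widetilde{J}_k$. By the injectivity of $\varphi_k|J_k$ (condition (f)), there is a unique $z_0 \in \widehat{J}_k$ with $\varphi_k(z_0) = w_0$, and $\varphi_k'(z_0) \neq 0$ by condition (e). Since $\widehat{J}_k$ lies in the interior of $J_k$ relative to $\partial\Omega$, $z_0$ lies in $\Omega_k$, and one can choose an open disk $D(z_0, r_1) \subset \Omega_k$ on which $\varphi_k$ is a biholomorphism. The image $E := \varphi_k(\widehat{J}_k \setminus D(z_0, r_1))$ is compact and does not contain $w_0$ (again by injectivity of $\varphi_k$ on $\widehat{J}_k$), so one can select $r_2 \in (0, r_1)$ so small that $V_0 := \varphi_k(D(z_0, r_2))$, which is an open neighborhood of $w_0$, is disjoint from $E$.

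For $w \in V_0 \setminus \widetilde{J}_k$, write $w = \varphi_k(z)$ with $z \in D(z_0, r_2) \setminus \widehat{J}_k$, and split the defining integral as $\CauchyMod{\psi}{\varphi_k}{\widehat{J}_k}(w) = I_1 + I_2$, according to whether $\zeta$ lies in $\widehat{J}_k \cap D(z_0, r_1)$ or in $\widehat{J}_k \setminus D(z_0, r_1)$. The term $I_2$ is directly bounded because $|\varphi_k(\zeta) - w| \geq \mathrm{dist}(V_0, E) > 0$ on its range of integration. For $I_1$, use the decomposition
\begin{equation*}
\frac{\varphi_k'(\zeta)}{\varphi_k(\zeta) - \varphi_k(z)} = \frac{1}{\zeta - z} + G(\zeta, z).
\end{equation*}
Lemma~\ref{weak-singularity} applied with $U = D(z_0, r_1)$, $K = \widehat{J}_k \cap \overline{D(z_0, r_1)}$, and $\Omega = D(z_0, r_2)$---the disk is inner chord-arc, and the injectivity of $\varphi_k$ on $\overline{D(z_0, r_1)}$ supplies the required separation hypothesis---gives $|G(\zeta, z)| \leq C|\zeta - z|^{\alpha - 1}$, whence the $G$-integral is uniformly bounded. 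The remaining Cauchy-kernel contribution $\int_{\widehat{J}_k \cap D(z_0, r_1)} \psi(\zeta)/(\zeta - z)\,d\zeta$ equals $\Cauchy{\psi}{\widehat{J}_k}(z)$ minus $\int_{\widehat{J}_k \setminus D(z_0, r_1)} \psi(\zeta)/(\zeta - z)\,d\zeta$; the first piece is bounded by the standing hypothesis on $\Cauchy{\psi}{\widehat{J}_k}$, and the second because $|\zeta - z| \geq r_1 - r_2 > 0$ on its range.

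The principal subtlety is the two-scale choice of radii $r_1 > r_2$ needed to simultaneously achieve (i) the separation hypothesis of Lemma~\ref{weak-singularity} via the local biholomorphism on $\overline{D(z_0, r_1)}$, and (ii) the uniform lower bound on $|\varphi_k(\zeta) - w|$ for $\zeta$ in the ``far'' region---both arranged via the disjointness of $V_0$ and $E$. Once this local bound is in hand, compactness of $\widetilde{J}_k$ and a finite subcover conclude the proof.
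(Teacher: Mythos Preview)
Your argument is correct, and the underlying idea---compare the modified Cauchy kernel to the ordinary one via the weak-singularity estimate of Lemma~\ref{weak-singularity}, then invoke the hypothesis that $\Cauchy{\psi}{\widehat{J}_k}$ is bounded---is exactly the paper's idea. The execution, however, is organized differently. The paper works globally: it observes that $\varphi_k(\Omega_k)$ is a single open neighborhood of $\varphi_k(\widehat{J}_k)$, and appeals once to Lemma~\ref{diff-compact} (applied with $\Omega_k$ in the role of $\Omega$) to conclude that $\CauchyMod{\psi}{\varphi_k}{\widehat{J}_k}\circ\varphi_k - \Cauchy{\psi}{\widehat{J}_k}$ extends to $A(\overline{\Omega}_k)$; boundedness on $\varphi_k(\Omega_k)\setminus\varphi_k(\widehat{J}_k)$ then follows immediately from the inclusion $\varphi_k(\Omega_k)\setminus\varphi_k(\widehat{J}_k)\subset\varphi_k(\Omega_k\setminus\widehat{J}_k)$. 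You instead localize: at each $w_0\in\varphi_k(\widehat{J}_k)$ you build a small neighborhood via a local inverse of $\varphi_k$, split the integral into near and far pieces with your two-radius device, apply Lemma~\ref{weak-singularity} directly on the small disk, and finish by compactness. The paper's route is shorter and reuses Lemma~\ref{diff-compact} as a black box, but it tacitly needs the separation hypothesis $\varphi_k(\zeta)\neq\varphi_k(z)$ for $\zeta\in\widehat{J}_k$, $z\in\overline{\Omega}_k\setminus\{\zeta\}$, i.e.\ injectivity of $\varphi_k$ on a full neighborhood of $\widehat{J}_k$; your local argument sidesteps this by only ever needing injectivity on small disks where it is guaranteed by $\varphi_k'(z_0)\neq 0$. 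One cosmetic point: ``$V_0$ disjoint from $E$'' does not by itself give $\operatorname{dist}(V_0,E)>0$ since $V_0$ is open; you should state that $r_2$ is chosen small enough that $\overline{V_0}\cap E=\emptyset$ (possible because $w_0\notin E$ and $E$ is compact).
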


\begin{proof}
  We must verify that $\CauchyMod \psi {\varphi_k} {\widehat{J}_k}$ is
  bounded in $\C \setminus \varphi_k(\widehat{J}_k)$.  It is enough to
  check that it is bounded in $\varphi_k(\Omega_k) \setminus
  \varphi_k(\widehat{J}_k)$ as $\varphi_k(\Omega_k)$ is an open set
  containing $\varphi_k(\widehat{J}_k)$.

  By Lemma~\ref{diff-compact}, the function
  \begin{equation*}
    \CauchyMod \psi{\varphi_k}{\widehat{J}_k}(\varphi_k(z)) - \Cauchy
    \psi{\widehat{J}_k}(z)
  \end{equation*}
  continues to a function in $A(\overline{\Omega}_k)$.  In particular,
  it is bounded in $\Omega_k \setminus \widehat{J}_k$.  Since $\Cauchy
  \psi{\widehat{J}_k}$ is bounded in $\C \setminus \widehat{J}_k$, it
  follows that $\CauchyMod \psi{\varphi_k}{\widehat{J}_k} \circ
  \varphi_k$ is bounded in $\Omega_k \setminus \widehat{J}_k$, or
  equivalently $\CauchyMod \psi{\varphi_k}{\widehat{J}_k}$ is bounded
  in $\varphi_k(\Omega_k \setminus \widehat{J}_k)$.  Since
  $\varphi_k(\Omega_k)\setminus\varphi_k(\widehat{J}_k) \subset
  \varphi_k(\Omega_k\setminus \widehat{J}_k)$, we conclude that
  $\CauchyMod \psi{\varphi_k}{\widehat{J}_k} \in
  H^\infty(\C\setminus\varphi_k(\widehat{J}_k))$.
\end{proof}

\begin{proof}[of Theorem~\ref{compact-operator}]
  Let us first justify that it is enough to prove the theorem for the
  case when each of the sets $J_k$ is a single arc and these arcs
  intersect only at their endpoints.  Write $J_k = \Gamma_{k,1} \cup
  \cdots \cup \Gamma_{k,r_k}$, where $\Gamma_{k,j}$ are disjoint arcs,
  and put $\psi_{k,j} = \varphi_k$, for $j=1,\ldots,r_k$.  Now pass to
  smaller arcs $\widetilde{\Gamma}_{k,j} \subset \Gamma_{k,j}$ such
  that the arcs $\widetilde{\Gamma}_{k,j}$ intersect only at endpoints
  but still cover all $\partial\Omega$.  The functions $\psi_{k,j}$
  and sets $\Gamma_{k,j}$ form an admissible family.  Assume that the
  conclusion of Theorem~\ref{compact-operator} is true for this
  family, and let $F_{k,j}$ be the linear operators associated to each
  of the functions $\psi_{k,j}$.  Putting $F_k =
  F_{k,1}+\cdots+F_{k,r_k}$ and recalling that $\psi_{k,j}=\varphi_k$,
  we see that the conclusion of Theorem~\ref{compact-operator} is true
  for the family $\{\varphi_k\}$ as well.

  We give the proof for a simply connected domain $\Omega$.  This case
  has the advantage that $\partial \Omega$ is a single Jordan curve,
  so the notation for numbering the arcs $J_k \subset \partial \Omega$
  is easier.  The proof for a multiply connected domain $\Omega$ is
  the essentially the same, except that the notation for the arcs
  $J_k$ is a bit more complex.

  Let us assume that the arcs $J_1,\ldots,J_n$ are numbered in a
  cyclic order, i.e., in such a way that $J_k$ intersects $J_{k-1}$
  and $J_{k+1}$ (here and henceforth we consider subindices modulo
  $n$).  Let $z_k \in \partial\Omega$ be the common endpoint of $J_k$
  and $J_{k+1}$, $k = 1,\ldots,n$.

  Let $V_k$ be a small disk centered at $z_k$ (its radius is
  determined later).  Choose functions
  $\eta_1,\ldots,\eta_n,\,\nu_1,\ldots,\nu_n \in
  \Cont^\infty(\partial\Omega)$ such that $0\leq\eta_k\leq 1$,
  $0\leq\nu_k\leq 1$ on $\partial\Omega$,
  $\eta_1+\cdots+\eta_n+\nu_1+\cdots+\nu_n = 1$, $\supp \nu_k \subset
  V_k \cap \partial\Omega$, and $\eta_k$ is supported on the interior
  of $J_k$ relative to $\partial\Omega$.

  \begin{figure}
    \begin{center}
      \includegraphics[width=7cm]{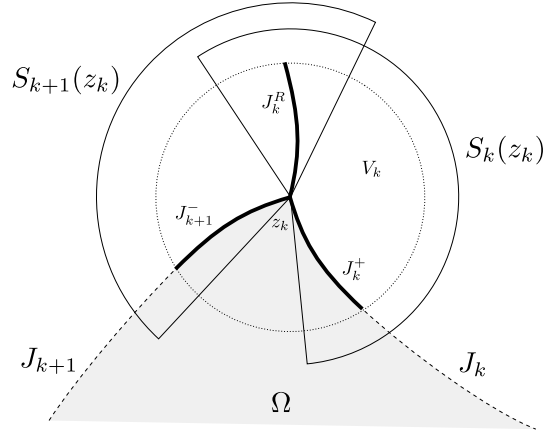}
      \caption{Geometric picture of the proof of
        Theorem~\ref{compact-operator}}
      \label{fig:1}
    \end{center}
  \end{figure}

  Put $J_k^+ = J_k \cap V_k$, $J_k^- = J_k \cap V_{k-1}$ and let $R_k$
  be a rigid rotation around the point $z_k$ such that $J_k^R
  \overset{\text{def}}{=} R_k J_k^+$ is contained in $[(S_k(z_k) \cap
  S_{k+1}(z_k))\setminus\overline{\Omega}]\cup \{z_k\}$ (see condition
  (d) in the definition of an admissible function).
  Figure~\ref{fig:1} is a picture of the relevant geometric objects.

  For $f \in H^\infty(\Omega)$, define
  \begin{equation}
    \label{eq:thm2-**}
    F_k(f) = \CauchyMod {f}{\varphi_k}{J_k}
    - \CauchyMod{(\nu_k f)\circ R_k^{-1}}{\varphi_k}{J_k^R} +
    \CauchyMod{(\nu_{k-1}f)\circ R_{k-1}^{-1}}{\varphi_k}{J_{k-1}^R}.
  \end{equation}
  Let us first check that $F_k(f) \in H^\infty(\D)$.  To do this, put
  \begin{equation}
    \begin{split}
      \label{eq:gk}
      G_k^+(f) &= \CauchyMod {\nu_k f}{\varphi_k}{J_k^+} -
      \CauchyMod {(\nu_kf)\circ R_k^{-1}}{\varphi_k}{J_k^R},\\
      G_k^-(f) &= \CauchyMod {\nu_k f}{\varphi_{k+1}}{J_{k+1}^-} +
      \CauchyMod {(\nu_kf)\circ R_k^{-1}}{\varphi_{k+1}}{J_k^R}.
    \end{split}
  \end{equation}
  Then we can write $F_k(f)$ as
  \begin{equation*}
    F_k(f) = \CauchyMod {\eta_k f}{\varphi_k}{J_k} + G_k^+(f) +
    G_{k-1}^-(f),
  \end{equation*}
  because
  \begin{equation*}
    \CauchyMod {f}{\varphi_k}{J_k} = \CauchyMod {\eta_k
      f}{\varphi_k}{J_k} + \CauchyMod {\nu_k f}{\varphi_k}{J_k^+} +
    \CauchyMod {\nu_{k-1} f}{\varphi_k}{J_k^-}.
  \end{equation*}
  Since $f \in H^\infty(\Omega)$, and $\eta_k$ is supported on a
  closed arc contained in the interior of $J_k$, it is easy to see
  that $\Cauchy {\eta_k f}{J_k}$ belongs to $H^\infty(\C\setminus
  J_k)$.  Lemma~\ref{lemma-simple} allows us to conclude that
  $\CauchyMod {\eta_k f}{\varphi_k}{J_k}$ belongs to $H^\infty(\C
  \setminus \varphi_k(J_k))$.  As $\varphi_k(J_k)\subset \T$, this
  implies that $\CauchyMod {\eta_k f}{\varphi_k}{J_k} \in
  H^\infty(\D)$.

  Since $|\varphi_k| < 1$ in $\Omega$ and $|\varphi_k| = 1$ in $J_k$,
  by the Schwarz reflection principle we can assume that
  \begin{equation*}
    |\varphi_k| > 1 \text{ in } \Omega_k \setminus \Omega
  \end{equation*}
  just by making $\Omega_k$ smaller if necessary (i.e., replacing
  $\Omega_k$ by $U_k\cap\Omega_k$, where $U_k$ is some open set
  containing $J_k \cup \Omega$).

  The following claim is justified below.
  \begin{claim}
    \label{claim1}
    $G_k^+(f)\in H^\infty(\C\setminus\varphi_k(J_k^+ \cup J_k^R))$ and
    $G_k^-(f)\in H^\infty(\C\setminus\varphi_{k+1}(J_{k+1}^- \cup
    J_k^R))$.
  \end{claim}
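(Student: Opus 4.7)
The plan is to prove the first assertion of Claim~\ref{claim1}; the second follows by an entirely analogous argument, with $\varphi_{k+1}$ playing the role of $\varphi_k$ and $J_{k+1}^-$ that of $J_k^+$. Since analyticity of $G_k^+(f)$ on $\C\setminus\varphi_k(J_k^+\cup J_k^R)$ is immediate from the definition, only the $L^\infty$ bound needs work. Writing $\widetilde J := J_k^+\cup J_k^R$, I would first combine the two integrals defining $G_k^+(f)$ into a single one via the substitution $\zeta = R_k\eta$ in the second, obtaining
\[
G_k^+(f)(w) \;=\; \int_{J_k^+}\!\left[\frac{\varphi_k'(\eta)}{\varphi_k(\eta)-w} - \frac{\widetilde\varphi_k'(\eta)}{\widetilde\varphi_k(\eta)-w}\right]\!(\nu_k f)(\eta)\,d\eta,
\]
with $\widetilde\varphi_k := \varphi_k\circ R_k$. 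Note that $\widetilde\varphi_k(z_k) = \varphi_k(z_k)$ while $\widetilde\varphi_k'(z_k) = e^{i\theta_k}\varphi_k'(z_k)$, so the two kernels have simple poles of matching locations at $z_k$ but with ``residues'' rotated by $\theta_k$; this is where the cancellation will come from.

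Next, I would apply Lemma~\ref{diff-compact} with $\Gamma=J_k^+$ and with $\Gamma=J_k^R$ (both piecewise smooth closed arcs in $\overline{\Omega}_k$ by condition~(d) in the definition of admissibility) to reduce the boundedness of $G_k^+(f)\circ\varphi_k$ on $\overline{\Omega}_k\setminus\widetilde J$ to that of the standard-Cauchy difference
\[
H(z) \;:=\; \Cauchy{\nu_k f}{J_k^+}(z) - \Cauchy{(\nu_k f)\circ R_k^{-1}}{J_k^R}(z).
\]
Changing variables $\eta \mapsto R_k\eta$ in the second term and combining gives
\[
H(z) \;=\; (e^{-i\theta_k}-1)(z-z_k)\int_{J_k^+}\frac{(\nu_k f)(\eta)\,d\eta}{(\eta-z)(\eta - R_k^{-1}z)}.
\]
The explicit factor $(z-z_k)$ in front is the manifestation of the cancellation of the logarithmic endpoint singularities of the individual Cauchy integrals at the common endpoint $z_k$; these singularities have matching coefficients because $R_k$ fixes $z_k$ and hence $(\nu_k f)(R_k^{-1}z_k) = (\nu_k f)(z_k)$. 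Once boundedness of $H$ on $\overline{\Omega}_k\setminus\widetilde J$ is established, the conclusion is immediate: $G_k^+(f)$ is bounded on $\varphi_k(\overline{\Omega}_k)\setminus\varphi_k(\widetilde J)$, and far from the compact set $\varphi_k(\widetilde J)$ the integrand in the original definition of $G_k^+(f)$ is itself bounded, giving boundedness on all of $\C\setminus\varphi_k(\widetilde J)$.

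The remaining step -- the kernel estimate for $H$ -- is the main obstacle. I would split the integration domain at scale $|\eta - z_k| \sim |z - z_k|$. On $|\eta - z_k| > C|z-z_k|$ the integrand is controlled by $C\|f\|_\infty/|\eta-z_k|^2$, and after integration along $J_k^+$ and multiplication by the prefactor $|z - z_k|$ one obtains a uniform bound. On the complementary small-scale region, one uses that $z$ and $R_k^{-1}z$ are angularly separated at $z_k$ by the rotation angle $\theta_k > 0$, together with the constraint $z, R_k^{-1}z \notin J_k^+$ (the former since $z \notin J_k^+$, the latter since $z \notin J_k^R$), to keep $|\eta - z|$ and $|\eta - R_k^{-1}z|$ bounded below relative to $|z - z_k|$. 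The delicate case is when $z$ approaches $z_k$ nearly tangentially to $J_k^+$ or to $J_k^R$; here the naive distance bounds degrade, and one must exploit the geometric information encoded in condition~(d) on the sectors $S_k(z_k), S_{k+1}(z_k)$ -- specifically, the fact that $R_k$ can be chosen so that $J_k^R$ is wedged inside $S_k(z_k)\cap S_{k+1}(z_k)$ away from $\overline\Omega$, creating a definite angular gap between $J_k^+$ and $J_k^R$ at $z_k$ -- to verify that the $(z-z_k)$ prefactor absorbs the residual growth of the kernel integral.
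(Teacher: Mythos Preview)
Your reduction via Lemma~\ref{diff-compact} to the ordinary Cauchy difference
\[
H(z)=\Cauchy{\nu_k f}{J_k^+}(z)-\Cauchy{(\nu_k f)\circ R_k^{-1}}{J_k^R}(z)
= (1-e^{-i\theta_k})(z-z_k)\int_{J_k^+}\frac{(\nu_k f)(\eta)\,d\eta}{(\eta-z)(\eta-R_k^{-1}z)}
\]
is correct (up to a sign), and this is also how the paper proceeds. The gap is in the subsequent kernel estimate. Your argument only uses $\|\nu_k f\|_\infty$, never the fact that $f$ is the boundary trace of a function in $H^\infty(\Omega)$; but for a generic $L^\infty$ density the function $H$ is \emph{not} bounded on $\C\setminus(J_k^+\cup J_k^R)$. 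Concretely, fix $\zeta_0$ in the interior of $J_k^+$, well away from $z_k$, and let $z\to\zeta_0$ with $z\notin J_k^+$. Then $|z-z_k|$ stays bounded away from $0$, and in your dyadic split the point $\eta=\zeta_0$ lies in whichever region is forced to carry the bad pole: if $C>1$ it lies in the small-scale region and your lower bound $|\eta-z|\gtrsim|z-z_k|$ fails, while if $C<1$ it lies in the large-scale region and your bound $|\eta-z|\gtrsim|\eta-z_k|$ fails. Either way the integral picks up a factor of order $\log(1/\operatorname{dist}(z,J_k^+))$, which the prefactor $|z-z_k|\asymp 1$ cannot absorb. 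The cancellation you describe at $z_k$ is real, but it has nothing to do with what happens near interior points of $J_k^+$.

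The paper avoids this by invoking the hypothesis $f\in H^\infty(\Omega)$ at exactly this point. Because $h_k^-+h_k^+=\Cauchy{\nu_k f}{V_k\cap\partial\Omega}$ is then globally bounded (Cauchy's formula in $\Omega$, vanishing outside), one can appeal to Theorem~4.1 of Havin--Nersessian on bounded separation of singularities to conclude that the rotated combination $h_k^-+h_k^+\circ R_k^{-1}$ (for $G_k^-$), respectively $h_k^+-h_k^+\circ R_k^{-1}$ (for $G_k^+$), remains in $H^\infty$ off the rotated arcs. This step is not a pointwise kernel bound; it is the nontrivial input from~\cite{HavinNersessian}. A second, smaller gap: even after bounding $G_k^+(f)\circ\varphi_k$ on $\overline{\Omega}_k\setminus\widetilde J$, the set $\varphi_k(\Omega_k)$ is only a sector at $\varphi_k(z_k)$, not a full neighbourhood, so ``far from $\varphi_k(\widetilde J)$'' does not cover everything. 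The paper fills this remaining sector by a Phragm\'en--Lindel\"of argument, using the elementary growth bound $G_k^+(f)(w)=O(|w-\varphi_k(z_k)|^{-1})$ there.
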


  Since $|\varphi_k| = 1$ in $J_k$, $\varphi_k(J_k^+\cup J_k^R) \cap
  \D = \emptyset$, and so by the claim, $G_k^+(f)$ and $G_k^-(f)$
  belong to $H^\infty(\Omega)$.  It follows that $F_k(f) \in
  H^\infty(\D)$ for every $f \in H^\infty(\Omega)$.  Moreover, it is
  clear from the proof of these lemmas that $F_k$ maps
  $H^\infty(\Omega)$ into $H^\infty(\D)$ and is bounded.

  We next show that the linear map
  \begin{equation*}
    f \mapsto f - \sum_{k=1}^n F_k(f) \circ \varphi_k,
  \end{equation*}
  is a compact operator on $H^\infty(\Omega)$, whose range is
  contained in $\A(\overline{\Omega})$.  A simple calculation using
  \begin{equation*}
    f = \sum_{k=1}^n \Cauchy f {J_k}
  \end{equation*}
  gives
  \begin{equation}
    \label{eq:thm2-**2}
    f - \sum_{k=1}^n F_k(f) \circ \varphi_k = \sum_{k=1}^n A_k(f) +
    B_k(\nu_k f),
  \end{equation}
  where
  \begin{equation}
    \label{eq:AkBk}
    \begin{split}
      A_k(\psi) &= \Cauchy {\psi}{J_k} - \CauchyMod
      {\psi}{\varphi_k}{J_k} \circ \varphi_k,\\
      B_k(\psi) &= \CauchyMod {\psi \circ
        R_k^{-1}}{\varphi_{k+1}}{J_k^R} \circ \varphi_{k+1} -
      \CauchyMod {\psi \circ R_k^{-1}}{\varphi_k}{J_k^R} \circ
      \varphi_k.
    \end{split}
  \end{equation}
  By Lemma~\ref{diff-compact}, the operator $A_k$ is compact from
  $L^\infty(\partial\Omega)$ into $A(\overline{\Omega})$.  To see that
  $B_k$ has the same property, write
  \begin{equation*}
    B_k(\psi) = \big[\CauchyMod {\psi \circ
      R_k^{-1}}{\varphi_{k+1}}{J_k^R} \circ \varphi_{k+1} - \Cauchy
    {\psi \circ R_k^{-1}}{J_k^R} \big] + \big[ \Cauchy {\psi \circ
      R_k^{-1}}{J_k^R} - \CauchyMod {\psi \circ
      R_k^{-1}}{\varphi_k}{J_k^R} \circ \varphi_k \big],
  \end{equation*}
  and apply Lemma~\ref{diff-compact} to each of the two terms in
  brackets.

  It remains to prove that the operators $F_k$ map
  $A(\overline{\Omega})$ into $A(\overline{\D})$.  It is enough to
  check that if $f$ is analytic on some open neighborhood of
  $\overline{\Omega}$, then $F_k(f) \in \Cont(\overline{\D})$, as the
  space of functions analytic on $\overline{\Omega}$ is dense in
  $A(\overline{\Omega})$ and $F_k$ is bounded.

  By~\eqref{eq:thm2-**} and properties of the modified Cauchy
  integral, $F_k(f)$ is continuous on
  $\overline{\D}\setminus\varphi_k(J_k)$.  Next check that $F_k(f)$
  extends by continuity to $\varphi_k(J_k)$.  Since $\varphi_k'$ does
  not vanish on $J_k$, there exists a continuous local inverse of
  $\varphi_k$ on each point of $\varphi_k(J_k)$.  This implies that it
  is enough to verify that $F_k(f)\circ \varphi_k$ is continuous in
  $\overline{\Omega}$.  Put
  \begin{equation*}
    \begin{split}
      \widetilde{F}_k(f) &= \Cauchy {\eta_k f}{J_k} +
      \widetilde{G}_k^+(f) + \widetilde{G}_{k-1}^-(f),\\
      \widetilde{G}_k^+(f) &= \Cauchy {\nu_k f}{J_k^+} -
      \Cauchy {(\nu_kf)\circ R_k^{-1}}{J_k^R},\\
      \widetilde{G}_k^-(f) &= \Cauchy {\nu_k f}{J_{k+1}^-} + \Cauchy
      {(\nu_kf)\circ R_k^{-1}}{J_k^R},
    \end{split}
  \end{equation*}
  i.e., replace the modified Cauchy integrals in the formulas for
  $F_k$, $G_k^-$ and $G_k^+$ by regular Cauchy integrals to get
  $\widetilde{F}_k$, $\widetilde{G}_k^-$ and $\widetilde{G}_k^+$.
  Arguing as above for the operators $A_k$ and $B_k$, we see that $f
  \mapsto F_k(f)\circ\varphi_k - \widetilde{F}_k(f)$ defines a compact
  operator whose range is contained in $\Cont(\overline{\Omega})$.
  Thus it is enough to show that $\widetilde{F}_k(f) \in
  \Cont(\overline{\Omega})$.

  By Lemma~\ref{lemma-A} below, it is easy to see that $\Cauchy
  {\eta_kf}{J_k} \in \Cont(\overline{\Omega})$.  We have
  \begin{equation*}
    \widetilde{G}_k^+(f) + \widetilde{G}_k^-(f) = \Cauchy
    {\nu_kf}{\partial\Omega\cap V_k}.
  \end{equation*}
  Also by Lemma~\ref{lemma-A}, the right hand side of this equality
  belongs to $\Cont(\overline{\Omega})$.  Therefore, it suffices to
  check that $\widetilde{G}_k^-(f) \in \Cont(\overline{\Omega})$.

  Now $\widetilde{G}_k^-(f) = \Cauchy {\widetilde{f}} {J_{k+1}^-\cup
    J_k^R}$, where $\widetilde{f}(z) = (\nu_k f)(z)$ for $z \in
  J_{k+1}^-$, and $\widetilde{f}(z) = (\nu_k f)(R_k^{-1}(z))$ for $z
  \in J_k^R$.  Since $f$ is analytic in a neighborhood of
  $\overline{\Omega}$, $\widetilde{f}$ is Lipschitz in $J_{k-1}^- \cup
  J_k^R$, and since $\widetilde{f}$ vanishes identically near the
  endpoints of $J_{k-1}^- \cup J_k^R$, Lemma~\ref{lemma-A} implies
  that $\widetilde{G}_k^-(f) \in \Cont(\overline{\Omega})$.  This
  finishes the proof of the theorem.
\end{proof}

\begin{proof}[of Claim~\ref{claim1}]
  We use the same techniques as those used in~\cite{HavinNersessian}
  to prove Theorem~4.1 to show that $g_k^-
  \overset{\text{def}}{=}G_k^-(f) \in
  H^\infty(\C\setminus\varphi_{k+1}(J_{k+1}^- \cup J_k^R))$.  Similar
  reasoning can be applied to $G_k^+(f)$.

  Let
  \begin{equation*}
    h_k^+ = \Cauchy {\nu_k f}{J_k^+},\qquad h_k^- = \Cauchy {\nu_k
      f}{J_{k+1}^-},
  \end{equation*}
  so that $h_k^- + h_k^+ = \Cauchy {\nu_k f}{V_k\cap\partial\Omega}$,
  which because $f \in H^\infty(\Omega)$, belongs to
  $H^\infty(\C\setminus(V_k\cap\partial\Omega))$.  Theorem 4.1
  in~\cite{HavinNersessian} applies, and so $h_k^- + h_k^+ \circ
  R_k^{-1}$ belongs to $H^\infty(\C\setminus(J_k^- \cup J_k^R))$.

  We next prove that $g_k^-$ is bounded in $\C \setminus
  \varphi_{k+1}(J_{k+1}^- \cup J_k^R)$.  It is clearly analytic in
  this set.  Let $S_{k+1}^-$ be an open circular sector with vertex on
  $z_k$, such that $J_{k+1}^- \cup J_k^R \subset S_{k+1}^- \cup
  \{z_k\}$ and $S_{k+1}^- \subset \Omega_{k+1}$.  This circular sector
  can be chosen by shrinking one of the circular sectors which appear
  in condition (d) in the definition of an admissible family.  We
  first show that $g_k^-$ is bounded in
  $\varphi_{k+1}(S_{k+1}^-\setminus(J_{k+1}^- \cup J_k^R))$.  To do
  this, observe that by a change of variables in the integral defining
  the Cauchy transform,
  \begin{equation*}
    h_k^+ \circ R_k^{-1} = \Cauchy {(\nu_k f)\circ R_k^{-1}}{J_k^R}.
  \end{equation*}
  Now compute
  \begin{equation*}
    \begin{split}
      g_k^- \circ \varphi_{k+1} - (h_k^- + h_k^+ \circ R_k^{-1}) &=
      \big[\CauchyMod {\nu_kf}{\varphi_{k+1}}{J_{k+1}^-} \circ
      \varphi_{k+1} - \Cauchy{\nu_kf}{J_{k+1}^-}\big] \\ &\quad +
      \big[\CauchyMod {(\nu_kf)\circ R_k^{-1}}{\varphi_{k+1}}{J_k^R}
      \circ \varphi_{k+1} - \Cauchy {(\nu_kf)\circ R_k^{-1}}{J_k^R}
      \big].
    \end{split}
  \end{equation*}
  A similar argument to the one used in Lemma~\ref{lemma-simple} shows
  that each of the expressions in square brackets is bounded in
  $S_{k+1}^- \setminus (J_{k+1}^- \cup J_k^R)$.  Therefore, $g_k^-
  \circ \varphi_{k+1}$ is also bounded in this set as $h_k^- + h_k^+
  \circ R_k^{-1}$ is bounded there.  It follows that $g_k^-$ is
  bounded in $\varphi_{k+1}(S_{k+1}^-\setminus(J_{k+1}^- \cup
  J_k^R))$.

  \begin{figure}
    \begin{center}
      \includegraphics[width=7cm]{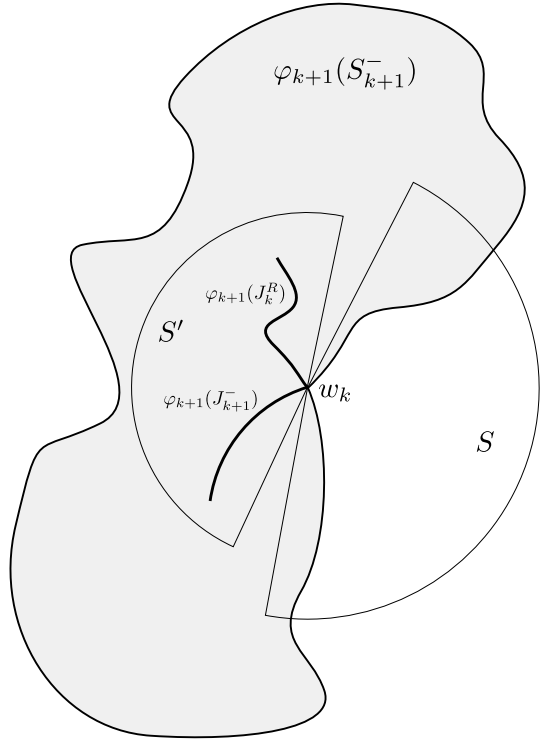}
      \caption{The circular sectors $S$ and $S'$}
      \label{fig:2}
    \end{center}
  \end{figure}

  It remains to prove that $g_k^-$ is bounded in $\C \setminus
  \varphi_{k+1}(S_{k+1}^-)$.  If $S$ is an open circular sector, we
  say that its straight edges are the two line segments which form a
  part of its boundary.  Put $w_k = \varphi_{k+1}(z_k)$.  Choose two
  open circular sectors $S$ and $S'$ with vertex $w_k$ having the
  following properties (see Figure~\ref{fig:2}):
  \begin{itemize}
  \item $\overline{S} \cap \overline{S'} = \{w_k\}$.
  \item $\varphi_{k+1}(J_{k+1}^- \cup J_k^R) \subset S' \cup \{w_k\}$.
  \item $\D_\varepsilon(w_k) \setminus \varphi_{k+1}(S_{k+1}^-)
    \subset S \cup \{w_k\}$ for some $\varepsilon > 0$.
  \item The straight edges of $S$ are contained in
    $\varphi_{k+1}(S_{k+1}^-) \cup \{w_k\}$.
  \end{itemize}
  Such circular sectors can be chosen by shrinking $V_k$ if necessary,
  using the fact that $\varphi_{k+1}$ is conformal at $z_k$.

  It is enough to show that $g_k^-$ is bounded in $S$, because
  $g_k^-(z)$ is clearly uniformly bounded when $z$ is away from
  $\varphi_{k+1}(J_{k+1}^- \cup J_k^R)$, and we have already seen that
  $g_k^-$ is bounded in $\varphi_{k+1}(S_{k+1}^- \setminus (J_{k+1}^-
  \cup J_k^R))$.  This is done by using a weak form of the
  Phragm\'en-Lindel\"of principle, in the same manner as
  in~\cite{HavinNersessian}.  Since $g_k^-$ is bounded in the straight
  edges of $S$ except at the vertex $w_k$ (the straight edges are
  contained in $\varphi_{k+1}(S_{k+1}^-\setminus(J_{k+1}^- \cup
  J_k^R))$, except for $w_k$), it suffices to show that $g_k^-$ is
  $O(|z-w_k|^{-1})$ as $z \to w_k$, $z \in S$.

  First, estimate
  \begin{equation*}
    \begin{split}
      |(z-w_k)g_k^-(z)| &\leq \int_{J_{k+1}^-}
      \frac{|z-w_k|}{|\varphi_{k+1}(\zeta) -
        z|}|(\nu_kf)(\zeta)\varphi_{k+1}'(\zeta)|\,|d\zeta|
      \\
      &+ \int_{J_k^R} \frac{|z-w_k|}{|\varphi_{k+1}(\zeta) -
        z|}|(\nu_kf)(\zeta)\varphi_{k+1}'(\zeta)|\,|d\zeta|
      \\
      &\leq \|f\|_\infty\int_{J_{k+1}^-\cup J_k^R}
      \frac{|z-w_k|}{|\varphi_{k+1}(\zeta) -
        z|}|\varphi_{k+1}'(\zeta)|\,|d\zeta|.
    \end{split}
  \end{equation*}
  We claim that $a(\zeta,z) \overset{\text{def}}{=}
  |z-w_k|/|\varphi_{k+1}(\zeta)-z|$ is uniformly bounded for $z \in S$
  and $\zeta \in J_k^- \cup J_k^R$, which follows from the observation
  that $\varphi_{k+1}(\zeta) \in S'$ and $z \in S$, so that
  $a(\zeta,z) \leq C$ due to the geometry of the cones $S$ and $S'$.
  The last integral is therefore uniformly bounded, so $g_k^-$ is
  $O(|z - w_k|^{-1})$ and we conclude that $g_k^-$ belongs to
  $H^\infty(\C\setminus\varphi_{k+1}(J_{k+1}\cup J_k^R))$.

  To see that $g_k^+$ belongs to
  $H^\infty(\C\setminus\varphi_{k}(J_k^+\cup J_k^R))$, use similar
  reasoning with $h_k^+ - h_k^+ \circ R_k^{-1}$ instead of $h_k^- +
  h_k^+ \circ R_k^{-1}$, an appropriate circular sector $S_k^+$ for
  $S_{k+1}^-$, and $\varphi_k$ in place of~$\varphi_{k+1}$.
\end{proof}

The following lemma is well known from the classical theory of Cauchy
integrals.  See, for instance,~\cite{Gajov}*{Chapter I, Section 5.1}.

\begin{lemma}
  \label{lemma-A}
  Let $\Gamma$ be a piecewise smooth Jordan curve and $\Omega$ the
  region interior to it.  If $\psi$ is of class H\"older $\alpha$ on
  $\Gamma$, $0<\alpha<1$, then $\Cauchy \psi \Gamma$ is of class
  H\"older $\alpha$ in $\overline{\Omega}$.
\end{lemma}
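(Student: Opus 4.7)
The statement is the classical Plemelj-Privalov theorem, for which the authors have cited Gakhov's book. My plan is to reproduce the standard proof. It suffices to establish
\begin{equation*}
|\Cauchy{\psi}{\Gamma}(z_1) - \Cauchy{\psi}{\Gamma}(z_2)| \leq C|z_1-z_2|^\alpha, \qquad z_1,z_2 \in \Omega,
\end{equation*}
because then $\Cauchy{\psi}{\Gamma}$ extends continuously to $\overline{\Omega}$ with the same Hölder constant. Setting $\delta = |z_1 - z_2|$ and choosing $\zeta_0 \in \Gamma$ closest to $z_1$ (so that $|\zeta - z_1| \geq |\zeta - \zeta_0|/2$ for every $\zeta \in \Gamma$ by the triangle inequality), the identity $\int_\Gamma d\zeta/(\zeta - z) = 2\pi i$, valid for $z \in \Omega$, allows one to rewrite
\begin{equation*}
\Cauchy{\psi}{\Gamma}(z_1) - \Cauchy{\psi}{\Gamma}(z_2) = (z_1-z_2)\int_\Gamma \frac{\psi(\zeta) - \psi(\zeta_0)}{(\zeta - z_1)(\zeta - z_2)}\, d\zeta.
\end{equation*}
The Hölder bound $|\psi(\zeta) - \psi(\zeta_0)| \leq C|\zeta - \zeta_0|^\alpha$ now compensates for the singularity of the kernel near $\zeta_0$.

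The estimate is carried out by splitting $\Gamma$ into a near piece $\Gamma_1 = \{\zeta \in \Gamma : |\zeta - \zeta_0| \leq M\delta\}$ and a far piece $\Gamma_2 = \Gamma \setminus \Gamma_1$ for a suitably large absolute constant $M$. On $\Gamma_2$ both $|\zeta - z_j| \geq c|\zeta - \zeta_0|$, so the integrand is controlled by $C\delta\,|\zeta - \zeta_0|^{\alpha - 2}$, which integrates to $O(\delta^\alpha)$ precisely because $\alpha < 1$. On $\Gamma_1$ one reverts to the split form $\int_{\Gamma_1}(\psi(\zeta)-\psi(\zeta_0))\bigl((\zeta-z_1)^{-1} - (\zeta-z_2)^{-1}\bigr)\,d\zeta$ and estimates each of the two resulting integrals separately using the bound $|\zeta - z_j|^{-1} \leq C|\zeta - \zeta_0|^{-1}$ together with the Hölder bound, obtaining $O(\delta^\alpha)$ as well.

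The main technical point is the geometric estimate $|\zeta - z_j| \geq c|\zeta - \zeta_0|$, which is immediate for $j = 1$ from the nearest-point choice of $\zeta_0$, but requires a careful choice of $M$ for $j = 2$; the piecewise smoothness of $\Gamma$ and the positivity of its interior angles guarantee that these geometric estimates remain uniform even when $\zeta_0$ lies near a corner. Once these bounds are in hand, the calculation is routine, and since every step appears in Gakhov, Chapter~I, \S 5.1, I would simply outline the argument and refer the reader there.
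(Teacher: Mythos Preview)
The paper gives no proof of this lemma; it simply records the citation to Gakhov, Chapter~I, \S5.1, and moves on. Your outline is indeed the classical Plemelj--Privalov argument found there, so in spirit you and the paper agree completely.

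One step in your sketch fails as written, though: the inequality $|\zeta - z_2| \geq c|\zeta - \zeta_0|$ on the near piece $\Gamma_1$. The point $\zeta_0$ was chosen nearest to $z_1$, and no choice of $M$ forces the analogous bound for $z_2$ on $\Gamma_1$; if $z_2$ happens to lie very close to some other point $\zeta_0' \in \Gamma_1$ (which can occur when $\mathrm{dist}(z_2,\Gamma) \ll \delta$), then $|\zeta_0' - z_2|/|\zeta_0' - \zeta_0|$ can be made arbitrarily small. Choosing $M$ large secures the estimate only on $\Gamma_2$, where $|\zeta - \zeta_0| > M\delta$ gives $|\zeta - z_2| \geq |\zeta - z_1| - \delta \geq |\zeta - \zeta_0|/4$; that reasoning breaks down near $\zeta_0$. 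The standard repair is to introduce $\zeta_0'$, the point of $\Gamma$ nearest $z_2$. After reducing to the case $\mathrm{dist}(z_1,\Gamma) \leq 2\delta$ (otherwise a direct derivative estimate already suffices) one has $|\zeta_0 - \zeta_0'| = O(\delta)$, so for the $j=2$ integral over $\Gamma_1$ one writes $\psi(\zeta)-\psi(\zeta_0) = (\psi(\zeta)-\psi(\zeta_0')) + O(\delta^\alpha)$, handles the first term exactly as for $j=1$ but with $\zeta_0'$ replacing $\zeta_0$, and observes that the remaining constant times $\int_{\Gamma_1}(\zeta-z_2)^{-1}\,d\zeta$ is uniformly bounded (its real part is a logarithm of a ratio of comparable endpoint distances, its imaginary part the argument change along a short simple arc). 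With this adjustment your outline is complete and matches Gakhov's treatment.
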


\section{Weak* closedness}
\label{weak-star}

In this section we prove that the algebra $\mathcal{H}_\Phi$ is
weak*-closed in $H^\infty$.  First recall a well known result about
weak*-continuity of adjoint operators, the proof of which is
elementary and so is omitted.

\begin{lemma}
  If $X$ is a Banach space and $T: X \to X$ is a bounded operator,
  then its adjoint $T^*: X^* \to X^*$ is continuous in the
  weak*-topology of $X^*$.
\end{lemma}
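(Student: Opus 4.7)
The plan is to invoke the universal property defining the weak* topology: on $X^*$ it is, by construction, the coarsest topology making every evaluation map $e_y : X^* \to \C$, $\phi \mapsto \phi(y)$, continuous, as $y$ ranges over $X$. A standard criterion from general topology then says that a map $S : X^* \to X^*$ is weak*-continuous if and only if each composition $e_y \circ S$ is weak*-continuous.

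Applying this with $S = T^*$, the key observation is that for any $y \in X$ and $\phi \in X^*$,
\[ (e_y \circ T^*)(\phi) = (T^*\phi)(y) = \phi(Ty) = e_{Ty}(\phi), \]
so $e_y \circ T^* = e_{Ty}$, which is weak*-continuous by the very definition of the weak* topology. This is all that is needed.

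Since the verification is essentially a formality, there is no real obstacle. A completely equivalent net-theoretic argument, which one might prefer to present directly, runs as follows: if $\phi_\alpha \to \phi$ in the weak* topology, then for every $y \in X$ one has $(T^*\phi_\alpha)(y) = \phi_\alpha(Ty) \to \phi(Ty) = (T^*\phi)(y)$, whence $T^*\phi_\alpha \to T^*\phi$ in the weak* topology. Note that the boundedness of $T$ enters only to guarantee that $T^*$ is a well-defined linear operator on $X^*$; the weak*-continuity itself requires nothing more than unpacking definitions.
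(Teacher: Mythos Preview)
Your proof is correct; the paper itself omits the proof entirely, calling it elementary, so there is nothing to compare against. Your argument via the initial-topology characterization (equivalently, the net argument) is exactly the standard verification one would supply here.
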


The operator $T$ is called the predual of $T^*$.  Thus any operator
with a predual is weak*-continuous, a condition applying to many
integral operators on $L^\infty$.

\begin{lemma}
  \label{predual}
  Let $T : L^\infty(\partial\Omega) \to L^\infty(\partial\Omega)$ be
  defined by
  \begin{equation*}
    (Tf)(z) = \int_{\partial\Omega} G(\zeta,z)f(\zeta)\,d\zeta,
  \end{equation*}
  where $G : \partial\Omega \times \partial\Omega \to \C$ is a
  measurable function satisfying
  \begin{equation*}
    \int_{\partial\Omega} |G(\zeta,z)|\,|d\zeta| \leq C
  \end{equation*}
  for every $z \in \partial\Omega$.  Then the operator $S$ defined by
  \begin{equation*}
    (Sg)(\zeta) = \int_{\partial\Omega} G(\zeta,z)g(z)\,dz
  \end{equation*}
  is a bounded operator $S:L^1(\partial\Omega)\to L^1(\partial\Omega)$
  and satisfies $S^* = T$.
\end{lemma}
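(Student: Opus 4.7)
The plan is to verify boundedness of both operators by direct estimates and then identify $S^*$ with $T$ using the Fubini theorem, which is the standard route for integral operators of this type.

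First I would check that $S$ is a bounded operator on $L^1(\partial\Omega)$. Since the kernel $G$ is measurable and the hypothesis gives a uniform bound on $\int_{\partial\Omega}|G(\zeta,z)|\,|d\zeta|$, Tonelli's theorem yields
\begin{equation*}
  \|Sg\|_{L^1} \leq \int_{\partial\Omega}\!\!\int_{\partial\Omega} |G(\zeta,z)|\,|g(z)|\,|dz|\,|d\zeta|
  = \int_{\partial\Omega} |g(z)|\Bigl(\int_{\partial\Omega} |G(\zeta,z)|\,|d\zeta|\Bigr)|dz|
  \leq C\|g\|_{L^1}.
\end{equation*}
A similar (easier) estimate shows $\|Tf\|_{L^\infty}\leq C\|f\|_{L^\infty}$, so $T$ is a bounded operator on $L^\infty(\partial\Omega)$.

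Next I would compute the action of $S^*$ on an arbitrary $f\in L^\infty(\partial\Omega)\cong L^1(\partial\Omega)^*$ via the pairing $\langle h,g\rangle = \int_{\partial\Omega} h(\zeta)g(\zeta)\,|d\zeta|$. For $g\in L^1(\partial\Omega)$ we have
\begin{equation*}
  \langle S^*f,g\rangle = \langle f,Sg\rangle
  = \int_{\partial\Omega} f(\zeta)\Bigl(\int_{\partial\Omega} G(\zeta,z)g(z)\,dz\Bigr)d\zeta.
\end{equation*}
The joint integrability needed to apply Fubini follows from
\begin{equation*}
  \int_{\partial\Omega}\!\!\int_{\partial\Omega} |G(\zeta,z)|\,|f(\zeta)|\,|g(z)|\,|d\zeta|\,|dz|
  \leq C\|f\|_{L^\infty}\|g\|_{L^1} < \infty,
\end{equation*}
so I can interchange the order of integration to rewrite the expression as
\begin{equation*}
  \int_{\partial\Omega} g(z)\Bigl(\int_{\partial\Omega} G(\zeta,z)f(\zeta)\,d\zeta\Bigr)dz
  = \int_{\partial\Omega} g(z)(Tf)(z)\,dz = \langle Tf,g\rangle.
\end{equation*}
Since $g\in L^1(\partial\Omega)$ is arbitrary, $S^*f = Tf$ in $L^\infty(\partial\Omega)$, and since $f\in L^\infty$ is arbitrary, $S^* = T$.

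There is no real obstacle here; the only point requiring minor care is the justification of Fubini's theorem for the change of order, which is why I would estimate the absolute integral first to verify the hypothesis. Everything else reduces to unwinding the definitions of the adjoint and the duality pairing between $L^1$ and $L^\infty$.
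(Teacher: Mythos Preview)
Your argument is correct and matches the paper's proof, which simply invokes Fubini's theorem twice---once for the $L^1$ bound on $S$ and once for the adjoint identity. One small point of care: your pairing is declared with $|d\zeta|$ but the subsequent computations use $d\zeta$; either convention works (they differ by a unimodular tangent factor, so the induced $(L^1)^*\cong L^\infty$ identification is the same), but you should keep the notation consistent throughout.
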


\begin{proof}
  Fubini's Theorem shows that
  \begin{equation*}
    \left|\int_{\partial\Omega} (Sg)(\zeta) f(\zeta)\, d\zeta \right|
    \leq C\|g\|_1\|f\|_\infty,
  \end{equation*}
  and so $S$ is bounded on $L^1(\partial\Omega)$.  Another application
  of Fubini's Theorem gives $S^* = T$.
\end{proof}

Recall from~\eqref{eq:thm2-**2} and the proof of
Theorem~\ref{compact-operator} that the operator
\begin{equation*}
  K(f) = f - \sum_{k=1}^n F_k(f) \circ \varphi_k,
\end{equation*}
is a weakly singular integral operator of the form
\begin{equation*}
  K(f)(z)\mapsto \int_{\partial\Omega} G(\zeta,z) f(\zeta)\, d\zeta,
\end{equation*}
where the function $G$ is continuous outside the diagonal $\{\zeta =
z\}$ and $|G(\zeta,z)|\leq C|\zeta-z|^{-\beta}$, for some $\beta < 1$.
Also, for each $\zeta \in \partial\Omega$, $G(\zeta,z)$ is analytic in
$z \in \Omega$.  Thus, $K$ is compact from $L^\infty(\Omega)$ to
$H^\infty(\Omega)$.  By Lemma~\ref{predual}, the operator $K$ has a
predual, so is weak*-continuous.

\begin{lemma}
  For every $\varepsilon > 0$, there is an operator $K_\varepsilon :
  L^\infty(\partial\Omega) \to L^\infty(\partial\Omega)$ of finite
  rank which has a predual and such that $\|K_\varepsilon - K\| <
  \varepsilon$.
\end{lemma}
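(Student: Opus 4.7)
The plan is to approximate $K$ in $L^\infty \to L^\infty$ operator norm by an integral operator whose kernel factors as a finite sum of separated products $\sum_{j=1}^N a_j(\zeta) b_j(z)$. Any such operator is automatically of finite rank (its image lies in the span of the $b_j$), and Lemma~\ref{predual} provides it with an explicit predual via the transposed kernel. The approximation will be carried out in two stages: first truncate the weak singularity of $G$ at the cost of $\varepsilon/2$, then uniformly approximate the resulting continuous kernel on the compact product $\partial\Omega \times \partial\Omega$.

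For the first stage I would exploit the bound $|G(\zeta,z)| \leq C|\zeta-z|^{-\beta}$ with $\beta < 1$, which gives
$$\sup_{z \in \partial\Omega} \int_{\{|\zeta-z| < \delta\}} |G(\zeta,z)|\,|d\zeta| \leq C'\delta^{1-\beta},$$
tending to $0$ as $\delta \to 0$. I would fix $\delta > 0$ small enough that this quantity is less than $\varepsilon/2$, and pick a smooth cutoff $\chi_\delta : [0,\infty) \to [0,1]$ with $\chi_\delta \equiv 0$ on $[0,\delta/2]$ and $\chi_\delta \equiv 1$ on $[\delta,\infty)$. Then $\widetilde G(\zeta,z) = G(\zeta,z)\chi_\delta(|\zeta-z|)$ extends continuously to all of $\partial\Omega \times \partial\Omega$ (since $G$ is continuous off the diagonal and the cutoff annihilates the singular set), and the integral operator $\widetilde K$ with kernel $\widetilde G$ satisfies $\|K - \widetilde K\| \leq \varepsilon/2$, because the $L^\infty \to L^\infty$ operator norm is dominated by $\sup_z \int |G - \widetilde G|(\zeta,z)\,|d\zeta|$.

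For the second stage, the Stone--Weierstrass theorem applied on the compact Hausdorff space $\partial\Omega \times \partial\Omega$ furnishes continuous functions $a_1,\ldots,a_N,b_1,\ldots,b_N$ on $\partial\Omega$ with
$$\bigg\|\widetilde G(\zeta,z) - \sum_{j=1}^N a_j(\zeta) b_j(z)\bigg\|_\infty < \frac{\varepsilon}{2\,|\partial\Omega|},$$
where $|\partial\Omega|$ denotes the total arc length. Defining $K_\varepsilon$ to be the integral operator with kernel $\sum_{j=1}^N a_j(\zeta) b_j(z)$ then yields $\|K_\varepsilon - \widetilde K\| < \varepsilon/2$ and hence $\|K_\varepsilon - K\| < \varepsilon$. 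The explicit formula
$$K_\varepsilon f(z) = \sum_{j=1}^N b_j(z) \int_{\partial\Omega} a_j(\zeta) f(\zeta)\,d\zeta$$
makes both the finite-rank property and the predual transparent: by Lemma~\ref{predual}, the predual is the bounded operator $S_\varepsilon g(\zeta) = \sum_{j=1}^N a_j(\zeta) \int_{\partial\Omega} b_j(z) g(z)\,dz$ on $L^1(\partial\Omega)$.

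No step is expected to be a serious obstacle; the only point requiring any care is the first, where the integrability of the weakly singular kernel ($\int |\zeta-z|^{-\beta}\,|d\zeta| < \infty$ since $\beta < 1$) is essential to make the truncation error arbitrarily small. Everything else is a standard tensor approximation on a compact product.
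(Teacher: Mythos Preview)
Your proposal is correct and follows the same strategy as the paper: approximate the kernel $G$ by a finite sum of separated products and invoke Lemma~\ref{predual}. The paper's proof is terser---it simply asserts the existence of $\alpha_j\in L^\infty(\partial\Omega)$, $\beta_j\in L^1(\partial\Omega)$ with $\int_{\partial\Omega}\big|G(\zeta,z)-\sum_j\alpha_j(z)\beta_j(\zeta)\big|\,d\zeta<\varepsilon/2$ uniformly in $z$---whereas your two-stage truncation-plus-Stone--Weierstrass argument is precisely what one would do to justify that assertion.
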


\begin{proof}
  Fix $\varepsilon > 0$.  Since $G$ is continuous outside $\{\zeta =
  z\}$, there exist $\alpha_j \in L^\infty(\partial\Omega)$ and
  $\beta_j \in L^1(\partial\Omega)$ such that
  \begin{equation*}
    \int_{\partial\Omega} \Big|G(\zeta,z) - \sum_{j=1}^N
    \alpha_j(z)\beta_j(\zeta)\Big|\,d\zeta < \varepsilon/2
  \end{equation*}
  for every $z \in \partial\Omega$.  This implies that the finite rank
  operator $K_\varepsilon$ defined by
  \begin{equation*}
    K_\varepsilon(\psi)(z) = \int_{\partial\Omega} \sum_{j=1}^N
    \alpha_j(z)\beta_j(\zeta)\psi(\zeta)\, d\zeta
  \end{equation*}
  satisfies $\|K_\varepsilon - K\| \leq \varepsilon/2$.  Clearly, by
  Lemma~\ref{predual}, the operator $K_\varepsilon$ has a predual.
\end{proof}

\begin{lemma}
  \label{weak-star-closed}
  Let $L(f) = \sum_{k=1}^n F_k(f)\circ\varphi_k$ be the operator $L :
  H^\infty(\Omega) \to H^\infty(\Omega)$ of
  Theorem~\ref{compact-operator}.  Then the range of $L$ is
  weak*-closed in $H^\infty(\Omega)$.
\end{lemma}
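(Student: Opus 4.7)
The plan is to combine Fredholm theory for a predual of $L$ with the closed range theorem. First, write $L = I - K$, where $K$ is the compact operator from the paragraph just before the statement. By Lemma~\ref{predual} together with the discussion following it, $K$ is weak*-continuous on $H^\infty(\Omega)$; here I am implicitly using that $H^\infty(\Omega)$ sits weak*-closedly inside $L^\infty(\partial\Omega)$, so that weak*-continuity of the ambient integral operator restricts correctly to $H^\infty(\Omega)$. Consequently $K = (K_*)^*$ for some bounded operator $K_*$ on the predual $(H^\infty(\Omega))_*$, and therefore $L = (I - K_*)^*$.

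Next, I would invoke Schauder's theorem: since $K = (K_*)^*$ is compact, the operator $K_*$ is itself compact. Hence $L_* := I - K_*$ is a compact perturbation of the identity on $(H^\infty(\Omega))_*$, and so a Fredholm operator of index zero by the Fredholm alternative; in particular, $\mathrm{range}(L_*)$ is norm-closed.

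With $L_*$ having closed range, the closed range theorem gives
\begin{equation*}
  \mathrm{range}(L) \,=\, \mathrm{range}\bigl((L_*)^*\bigr) \,=\, (\ker L_*)^{\perp},
\end{equation*}
where the annihilator is taken inside $\bigl((H^\infty(\Omega))_*\bigr)^{\!*} = H^\infty(\Omega)$. Since the annihilator of any subset of the predual is automatically weak*-closed in $H^\infty(\Omega)$ (it is the intersection, over $g \in \ker L_*$, of the weak*-closed hyperplanes $\{f : \langle f,g\rangle = 0\}$), the range of $L$ is weak*-closed, as desired.

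The only real obstacle is the initial identification in the first paragraph: one must be careful that the predual of $K$ produced by Lemma~\ref{predual} (which naturally lives on $L^1(\partial\Omega)$) can be transferred to the specific predual $(H^\infty(\Omega))_*$. This is handled by the weak*-closedness of $H^\infty(\Omega)$ in $L^\infty(\partial\Omega)$, after which everything reduces to standard duality and Fredholm theory.
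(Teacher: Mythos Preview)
Your argument is correct and is more streamlined than the paper's. The paper does not invoke Schauder's theorem or the closed range theorem; instead it uses the preceding lemma to find a finite-rank operator $K_1$ (with a predual) satisfying $\|K-K_1\|<1$, factors $I-K=\Delta(I-\Delta^{-1}K_1)$ with $\Delta=I-(K-K_1)$ invertible, and then argues directly: the image $(I-\Delta^{-1}K_1)H^\infty(\Omega)$ contains the weak*-closed, finite-codimensional subspace $\ker K_1\cap H^\infty(\Omega)$, hence is itself weak*-closed, and applying the weak*-homeomorphism $\Delta$ preserves this. Your route packages all of that into Schauder plus Fredholm on the predual plus the closed range theorem, which is more conceptual; the paper's route is more hands-on, stays at the level of $L^\infty(\partial\Omega)$, and never needs to name the predual of $H^\infty(\Omega)$ or appeal to Schauder. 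One small comment on your last paragraph: the transfer is cleanest if you simply note that the weak* topology of $H^\infty(\Omega)$ as a dual space agrees with the relative weak* topology inherited from $L^\infty(\partial\Omega)$ (because $H^\infty(\Omega)$ is weak*-closed there), so $K|_{H^\infty(\Omega)}$ is weak*-to-weak* continuous, and then quote the general fact that every bounded weak*-continuous operator on a dual Banach space is an adjoint; there is no need to explicitly push the $L^1$-level predual down to the quotient.
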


\begin{proof}
  We have $L = (I-K)|H^\infty(\Omega)$.  By the preceding lemma with
  $\varepsilon = 1$, there is a finite rank operator $K_1$ such that
  $\|K_1 - K\| < 1$.  Put $M = H^\infty(\Omega) +
  K_1(L^\infty(\partial\Omega))$.  Then, since $H^\infty(\Omega)$ is
  weak*-closed in $L^\infty(\partial\Omega)$, $M$ is a weak*-closed
  subset of $L^\infty(\Omega)$ such that $H^\infty(\Omega)$ has finite
  codimension in $M$.  Define $\Delta = I - (K-K_1)$.  Note that
  $K_1(L^\infty(\partial\Omega)) \subset M$ and
  $K(L^\infty(\partial\Omega)) \subset H^\infty(\Omega) \subset M$.
  Since
  \begin{equation*}
    \Delta^{-1} = \sum_{j=0}^\infty (K-K_1)^j,
  \end{equation*}
  this series being convergent in operator norm, we also have
  $\Delta^{-1}M \subset M$.

  Now observe that
  \begin{equation*}
    L(H^\infty(\Omega)) = (I - K)H^\infty(\Omega) = \Delta(I -
    \Delta^{-1}K_1)H^\infty(\Omega).
  \end{equation*}
  Put $X = (I-\Delta^{-1}K_1)H^\infty(\Omega)$ and note that $\ker K_1
  \cap H^\infty(\Omega) \subset X$.  Since $\ker K_1 \cap
  H^\infty(\Omega)$ is weak*-closed and has finite codimension in $M$,
  and $X \subset M$, it follows that $X$ is weak* closed.

  It remains to show that $\Delta X$ is weak*-closed.  It is enough to
  check that $\Delta^{-1}$ is weak*-continuous.  Since $K$ and $K_1$
  have preduals, it follows that $\Delta$ has a predual.  Therefore,
  $\Delta^{-1}$ also has a predual, and so it is weak*-continuous.
\end{proof}

Finally, we can show that $\mathcal{H}_\Phi$ is weak*-closed in
$H^\infty(\Omega)$.  The argument is similar to the proof of
Lemma~\ref{finite-codim}.

\begin{lemma}
  If $\Phi : \overline{\Omega}\to\overline{\D}^n$ is admissible, then
  $\mathcal{H}_\Phi$ is weak*-closed in $H^\infty(\Omega)$.
\end{lemma}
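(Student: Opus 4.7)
The plan is to deduce the lemma by combining the two preceding results: Lemma~\ref{weak-star-closed}, which asserts that the range $M := L(H^\infty(\Omega))$ of the operator $L(f) = \sum_{k=1}^n F_k(f)\circ\varphi_k$ is weak*-closed in $H^\infty(\Omega)$, together with Lemma~\ref{finite-codim}, which tells us that $\mathcal{H}_\Phi$ is a norm-closed subalgebra of finite codimension in $H^\infty(\Omega)$. The inclusion $M\subset\mathcal{H}_\Phi$ is clear, since every element $L(f)=\sum_{k=1}^n F_k(f)\circ\varphi_k$ lies in $\mathcal{H}_\Phi$ by the very definition of the latter. Thus we are reduced to the purely functional-analytic statement: if $M$ is a weak*-closed subspace of a dual Banach space $X^*$ and $W$ is a subspace with $M\subset W\subset X^*$ and $\dim(W/M)<\infty$, then $W$ is weak*-closed.

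To establish this, I would pass to the quotient. Because $M$ is weak*-closed, it is the annihilator in $X^*$ of $M_\perp\subset X$, and the quotient $X^*/M$ is naturally isometrically isomorphic to the dual of $M_\perp$, with the quotient of the weak*-topology corresponding to the weak*-topology of $(M_\perp)^*$. Under this identification, $W/M$ is a finite-dimensional subspace of $(M_\perp)^*$, hence norm-closed, and on a finite-dimensional space the norm and weak* topologies coincide, so $W/M$ is weak*-closed in $X^*/M$. Pulling back through the weak*-to-weak* continuous quotient map then gives that $W$ is weak*-closed in $X^*$.

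Applying this with $X^*=H^\infty(\Omega)$, $M=L(H^\infty(\Omega))$, and $W=\mathcal{H}_\Phi$ yields the lemma, since $W/M$ is finite-dimensional (a quotient of $H^\infty(\Omega)/M$, which is finite-dimensional because $M$ contains the range of the Fredholm-type operator appearing in Theorem~\ref{compact-operator}, itself of finite codimension, and $\mathcal{H}_\Phi/M$ is a further quotient).

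The only point requiring care is the general functional-analytic lemma about finite-dimensional extensions of weak*-closed subspaces; if one prefers to avoid the quotient duality, it can alternatively be verified via the Krein--Smulian theorem by checking that $W\cap rB_{X^*}$ is weak*-closed for each $r>0$. Writing $W=M+\mathrm{span}\{e_1,\dots,e_m\}$ with the $e_j$ chosen so that $\{e_j+M\}$ is a basis of $W/M$, any bounded net in $W$ has scalar coefficients with respect to the $e_j$ that remain bounded (by the open mapping theorem applied to $M\oplus\mathbb{C}^m\to W$); passing to a subnet, the coefficients converge and the corresponding elements of $M$ converge weak* to an element of $M$, giving the limit in $W$. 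I do not expect any serious obstacle; this last lemma is the only nontrivial ingredient beyond what has already been assembled.
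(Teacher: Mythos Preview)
Your proposal is correct and follows essentially the same route as the paper: both use that $M=L(H^\infty(\Omega))$ is weak*-closed (Lemma~\ref{weak-star-closed}) and of finite codimension in $H^\infty(\Omega)$ (proof of Lemma~\ref{finite-codim}), together with the inclusion $M\subset\mathcal{H}_\Phi$, to conclude. The paper simply asserts the last step (``Since $\mathcal{H}_\Phi$ contains this range, we get that $\mathcal{H}_\Phi$ is weak*-closed'') without further comment, whereas you spell out the underlying functional-analytic fact that a finite-dimensional extension of a weak*-closed subspace is weak*-closed; your quotient-duality and Krein--Smulian arguments are both valid ways to justify it.
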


\begin{proof}
  We have already seen in the proof of Lemma~\ref{finite-codim} that
  the range of the operator $f \mapsto \sum F_k(f) \circ \varphi_k$,
  $f \in H^\infty(\Omega)$, has finite codimension in
  $H^\infty(\Omega)$.  By the preceding lemma, the range is also
  weak*-closed.  Since $\mathcal{H}_\Phi$ contains this range, we get
  that $\mathcal{H}_\Phi$ is weak*-closed.
\end{proof}

\section{Glued subalgebras}
\label{glued-subalgebras}

In this section we characterize the maximal ideal space and the
derivations of finite codimensional subalgebras of a unital
commutative Banach algebra.  The arguments used are purely algebraic
and similar results hold for arbitrary unital commutative complex
algebras.  In the algebraic setting, one should replace the maximal
ideal space by the set of all (unital) homomorphisms of the algebra
into the complex field and disregard every reference made to the
topology, such as closed subspaces and continuity of homomorphisms and
derivations.

Let $A$ be a commutative unital Banach algebra.  A \emph{glued
  subalgebra} of $A$ is understood to be a (unital) subalgebra of the
form
\begin{equation}
  \label{eq:glued-subalgebra}
  B = \{f \in A : \alpha_j(f) = \beta_j(f),\ j = 1,\ldots,r\},
\end{equation}
where $\alpha_j,\beta_j \in \M(A)$ and $\alpha_j \neq \beta_j$ for
$j=1,\ldots,r$.  We define the set of points of $A$ glued in $B$ as
\begin{equation*}
  G(A,B) = \{\alpha_j : j = 1,\ldots,r \} \cup \{\beta_j : j =
  1,\ldots,r\} \subset \M(A).
\end{equation*}

Our first goal is to characterize the space $\M(B)$ in terms of
$\M(A)$.  Since $B$ is a subalgebra of $A$, there is a map $i^*:
\M(A)\to\M(B)$ which sends each complex homomorphism $\psi \in \M(A)$
to its restriction $\psi|B \in \M(B)$.  We first show that $i^*$ is
onto.  To do this, we need to use the so called ``lying over lemma'',
which applies to integral ring extensions.

Recall that if $R$ is a subring of some ring $S$, then $S$ is called
\emph{integral} over $R$ if for every $\alpha\in S$ there is a monic
polynomial $p \in R[x]$ such that $p(\alpha) = 0$.  It is well known
that if $B$ is a finite codimensional subalgebra of some algebra $A$,
then $A$ is integral over $B$.

The following ``lying over lemma'' or Cohen-Seidenberg theorem is a
standard result from commutative algebra.  It was originally proved in
\cite{CohenSeidenberg}.

\begin{lemma}[(Lying over lemma)]
  If $S$ is integral over $R$ and $P$ is a prime ideal in $R$, then
  there is a prime ideal $Q$ in $S$ such that $P = Q\cap R$ $($we say
  that $Q$ is \emph{lying over} $P)$.  If $Q$ is a prime ideal in $S$
  lying over $P$, then $Q$ is maximal if and only if $P$ is maximal.
\end{lemma}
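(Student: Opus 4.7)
The plan is to establish a key algebraic lemma first, use it to get the maximality half immediately, and then handle existence via localization at $P$. The key lemma: if $A \subset B$ are commutative integral domains with $B$ integral over $A$, then $A$ is a field if and only if $B$ is a field. Granting this, the second assertion of the lying over lemma is immediate, because a prime ideal is maximal precisely when the quotient ring is a field, and when $Q$ lies over $P$ the canonical inclusion $R/P \hookrightarrow S/Q$ is an integral extension of integral domains.

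For the key lemma, both directions are manipulations of monic integral relations. If $B$ is a field and $0 \neq a \in A$, pick a monic relation $(a^{-1})^n + c_{n-1}(a^{-1})^{n-1} + \cdots + c_0 = 0$ with $c_i \in A$; multiplying through by $a^{n-1}$ rewrites $a^{-1}$ as a polynomial in $a$ with coefficients in $A$, so $a^{-1} \in A$. Conversely, if $A$ is a field and $0 \neq b \in B$ has a minimal monic relation $b^n + a_{n-1}b^{n-1} + \cdots + a_0 = 0$ over $A$, then $a_0 \neq 0$ (otherwise $b(b^{n-1} + a_{n-1}b^{n-2} + \cdots + a_1) = 0$, and because $B$ is a domain this would contradict minimality), and one solves algebraically for $b^{-1}$ using $a_0^{-1}$ and the other $a_i$.

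For existence, I would localize at $P$. Setting $T = R \setminus P$, $R_P = T^{-1}R$, and $S_P = T^{-1}S$, the ring $S_P$ is integral over $R_P$ (clear denominators in a monic relation for $s \in S$), and $R_P$ is local with unique maximal ideal $PR_P$. Pick a maximal ideal $N$ of $S_P$ using Zorn's lemma; then $N \cap R_P$ is prime, and the inclusion $R_P/(N \cap R_P) \hookrightarrow S_P/N$ is an integral extension of domains in which the larger ring is a field. The key lemma forces $R_P/(N \cap R_P)$ to be a field as well, so $N \cap R_P$ is maximal and hence equals $PR_P$. Finally, let $Q$ be the contraction of $N$ along the localization map $S \to S_P$; this $Q$ is a prime of $S$, and by the standard correspondence between primes of $R_P$ and primes of $R$ disjoint from $T$, we have $Q \cap R = P$.

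The main obstacle here is really bookkeeping rather than depth: one has to verify that localization preserves integrality, that contractions of primes are primes, and that the primes of $R_P$ biject with the primes of $R$ disjoint from $T$, all without any finiteness or Noetherianity assumptions on $R$ and $S$. All three facts are elementary and hold for arbitrary commutative rings, so no extra hypotheses on the ambient algebra $A$ in the application back to the paper are needed.
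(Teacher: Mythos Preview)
Your argument is correct and is the standard textbook proof of the Cohen--Seidenberg theorem: the field/field lemma for integral extensions of domains yields the maximality claim, and localization at $P$ together with that lemma gives existence. All the bookkeeping you flag (integrality is preserved under localization, contractions of primes are prime, the bijection between primes of $R_P$ and primes of $R$ missing $T$) is elementary and correct for arbitrary commutative rings.

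That said, there is nothing to compare against: the paper does not prove this lemma. It is stated as a classical result from commutative algebra and attributed to Cohen and Seidenberg, and then simply applied in the proof of Lemma~\ref{glued-onto}. So your proof is not merely a different route; it supplies an argument where the paper deliberately omits one.
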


\begin{lemma}
  \label{glued-onto}
  If $B$ is a finite codimensional closed unital subalgebra of a
  commutative unital Banach algebra $A$, then $i^*:\M(A)\to\M(B)$ is
  onto.
\end{lemma}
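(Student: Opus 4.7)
The plan is to obtain the surjectivity from the lying over lemma applied to the extension $B\subset A$, together with the classical fact that every complex homomorphism of a commutative Banach algebra is automatically continuous.

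First I would reduce the problem to ring theory. Fix $\psi\in\mathfrak{M}(B)$ and let $P=\ker\psi$, which is a maximal ideal of $B$ of codimension one, hence a prime ideal. Since $B$ has finite codimension in $A$, the algebra $A$ is a finitely generated $B$-module (any $\mathbb{C}$-basis of a finite-dimensional complement of $B$ in $A$ generates $A$ over $B$), and a standard argument via the Cayley--Hamilton trick shows that this forces $A$ to be integral over $B$: for each $a\in A$, the $B$-submodule $B[a]\subseteq A$ is finitely generated, so multiplication by $a$ on $B[a]$ satisfies a monic polynomial with coefficients in $B$, giving a monic relation for $a$.

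Now I would apply the lying over lemma quoted in the excerpt. It produces a prime ideal $Q\subset A$ with $Q\cap B=P$, and since $P$ is maximal in $B$, the same lemma guarantees that $Q$ is maximal in $A$. Hence $A/Q$ is a field, and it is integral over its subfield $B/P\cong\mathbb{C}$. Because $\mathbb{C}$ is algebraically closed, integral extensions of $\mathbb{C}$ are trivial, so $A/Q=\mathbb{C}$. The composition
\begin{equation*}
  \widetilde\psi : A \longrightarrow A/Q \xrightarrow{\ \cong\ } \mathbb{C}
\end{equation*}
is a nonzero unital algebra homomorphism, hence automatically continuous on the commutative Banach algebra $A$ (a standard Gelfand--Mazur type result), so $\widetilde\psi\in\mathfrak{M}(A)$. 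Finally, the restriction $\widetilde\psi|_B$ is a unital homomorphism $B\to\mathbb{C}$ with kernel $B\cap Q=P=\ker\psi$, and two such homomorphisms necessarily coincide, so $i^*(\widetilde\psi)=\widetilde\psi|_B=\psi$.

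The only real subtlety, and thus the main point requiring care, is verifying that $A$ is integral over $B$ from the finite codimension hypothesis; once this is in place, the lying over lemma together with the algebraic closedness of $\mathbb{C}$ does the rest of the work essentially automatically. Everything else — the identification $A/Q=\mathbb{C}$, continuity of $\widetilde\psi$, and the matching of $\widetilde\psi|_B$ with $\psi$ — is routine.
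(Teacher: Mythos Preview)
Your proof is correct and follows essentially the same approach as the paper: both reduce to the lying over lemma (after noting that finite codimension implies integrality) to produce a maximal ideal $Q$ in $A$ lying over $\ker\psi$, and then argue that $Q$ has codimension one. The only minor difference is in this last step: the paper observes that $Q$ contains the closed finite-codimensional subspace $P$, hence is itself closed, and then invokes the fact that closed maximal ideals in a Banach algebra have codimension one; you instead show $A/Q\cong\C$ directly from integrality over the algebraically closed field $\C$ and then use automatic continuity of characters---which is precisely the purely algebraic variant the paper sketches in the paragraph following its proof.
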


\begin{proof}
  Let $\psi_B\in\M(B)$ and put $P = \ker \psi_B$.  Then $P$ is a
  maximal ideal in $B$.  By the lying over lemma, there is a maximal
  ideal $Q$ in $A$ such that $Q\cap B = P$.  Since $P$ is closed and
  has finite codimension in $A$ and $Q \supset P$, it follows that $Q$
  is closed.  Every maximal ideal in a Banach algebra has codimension
  one, so $Q$ has codimension one in $A$.  Therefore, there exists a
  unique $\psi_A\in\M(A)$ such that $\ker \psi_A = Q$.  The equality
  $Q\cap B = P$ implies $i^*(\psi_A) = \psi_B$.
\end{proof}

Note that since $\M(A)$ is compact and $i^*$ is continuous, it follows
that $i^*$ is topologically a quotient map.

In the purely algebraic setting, it is no longer true that every
maximal ideal has codimension one.  However, one can still show that
$Q$ has codimension one in $A$.  Indeed, note that $P$ has finite
codimension in $B$.  Therefore $P$ has finite codimension in $A$.
Since $P \subset Q$, it follows that $Q$ has finite codimension in
$A$.  Now, $A/Q$ is a field, because $Q$ is a maximal ideal in $A$.
Also, $A/Q$ is a finite dimensional vector space over $\C$.  Since
finite field extensions are algebraic and $\C$ is algebraically
closed, it follows that $A/Q$ is isomorphic to $\C$, so that $Q$ has
codimension one in $A$.

The following is a kind of ``interpolation'' lemma.  It will be very
useful in the sequel~\cite{Article2}.

\begin{lemma}
  \label{glued-interpolation}
  Let $A$ be a commutative unital Banach algebra.  If
  $\psi_0,\ldots,\psi_s$ are distinct points in $\M(A)$, then there is
  some $f \in A$ such that $\psi_j(f) = 0$ for $j = 1,\ldots,s$, but
  $\psi_0(f) = 1$.
\end{lemma}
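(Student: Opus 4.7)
The plan is to exploit the multiplicativity of the characters $\psi_j$, reducing the problem to the separation of $\psi_0$ from each individual $\psi_j$ and then taking a product. Since each $\psi_j$ is a unital homomorphism into $\C$, any pair $(\psi_0,\psi_j)$ of distinct characters must differ on some element of $A$; the rest is a normalization-and-multiplication argument.

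First I would fix $j \in \{1,\ldots,s\}$. Because $\psi_0 \ne \psi_j$, there exists $g_j \in A$ with $\psi_0(g_j) \ne \psi_j(g_j)$. Define
\begin{equation*}
  h_j = \frac{g_j - \psi_j(g_j)\cdot 1_A}{\psi_0(g_j) - \psi_j(g_j)} \in A,
\end{equation*}
which is well-defined since the denominator is a nonzero scalar. Since $\psi_0$ and $\psi_j$ are unital, a direct computation gives $\psi_0(h_j) = 1$ and $\psi_j(h_j) = 0$. Note that $h_j$ need not annihilate $\psi_i$ for $i \ne j$, so we cannot simply sum; we have to multiply.

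Next I would set $f = h_1 h_2 \cdots h_s \in A$. Applying the multiplicative homomorphism $\psi_0$ yields
\begin{equation*}
  \psi_0(f) = \prod_{j=1}^s \psi_0(h_j) = 1,
\end{equation*}
while for each fixed $i \in \{1,\ldots,s\}$, the factor $\psi_i(h_i) = 0$ forces
\begin{equation*}
  \psi_i(f) = \prod_{j=1}^s \psi_i(h_j) = 0.
\end{equation*}
This produces the required $f$.

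There is no real obstacle here, as the proof is purely algebraic and uses nothing beyond the definition of a multiplicative linear functional; in particular, no Banach-algebra structure, no finite codimension, and no topology are needed. The only subtle point to be careful about is keeping track of the unit: writing $\psi_j(g_j) \cdot 1_A$ rather than $\psi_j(g_j)$ in the numerator, so that the arithmetic of unital homomorphisms produces precisely $\psi_0(h_j) = 1$ and $\psi_j(h_j) = 0$.
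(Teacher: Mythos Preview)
Your proof is correct and follows essentially the same approach as the paper: for each $j$ you produce an element $h_j$ with $\psi_0(h_j)=1$ and $\psi_j(h_j)=0$, then take the product. The only cosmetic difference is that the paper argues via $\ker\psi_0\not\subset\ker\psi_j$ to find $f_j$ with $\psi_j(f_j)=0$ and $\psi_0(f_j)\neq 0$, whereas you start from any $g_j$ separating $\psi_0$ and $\psi_j$ and subtract $\psi_j(g_j)\cdot 1_A$; these amount to the same construction.
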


\begin{proof}
  Fix $j\in \{1,\ldots,s\}$.  There is some $f_j\in A$ such that
  $\psi_0(f_j) \neq 0$ and $\psi_j(f_j) = 0$, for if this were not the
  case, then $\ker \psi_0 \subset \ker \psi_j$, which would imply that
  $\ker \psi_0 = \ker \psi_j$ since both kernels have codimension one
  in $A$.  Hence we would have $\psi_0 = \psi_j$, a contradiction.

  For $f_1,\ldots,f_s$ chosen in this way,
  \begin{equation*}
    f = \prod_{j=1}^s \frac{f_j}{\psi_0(f_j)}.
  \end{equation*}
  has the required properties.
\end{proof}

\begin{lemma}
  \label{glued-preimage}
  If $B$ is a glued subalgebra of $A$ and $\psi_B\in\M(B)$, then
  either $(i^*)^{-1}(\{\psi_B\}) \subset G(A,B)$ or
  $(i^*)^{-1}(\{\psi_B\}) = \{\psi_A\}$, for some $\psi_A \notin
  G(A,B)$.
\end{lemma}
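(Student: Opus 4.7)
The plan is to prove the dichotomy by showing that the mere existence of one preimage $\psi_A \in (i^*)^{-1}(\{\psi_B\})$ outside $G(A,B)$ already forces that preimage to be the unique element of $(i^*)^{-1}(\{\psi_B\})$. The only real tool needed is Lemma~\ref{glued-interpolation}, which produces elements of $A$ separating a distinguished point of $\M(A)$ from any given finite list of other points.

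Concretely, I would argue by contradiction: suppose that $\psi_A \notin G(A,B)$ is in $(i^*)^{-1}(\{\psi_B\})$ and that there is a second preimage $\psi'_A \neq \psi_A$ in $(i^*)^{-1}(\{\psi_B\})$. Form the finite set $\{\psi_A, \psi'_A, \alpha_1, \beta_1, \ldots, \alpha_r, \beta_r\}$, discard duplicates among the non-$\psi_A$ entries, and note that $\psi_A$ remains distinct from all of the others because $\psi_A \notin G(A,B) = \{\alpha_j,\beta_j\}_j$ and $\psi_A \neq \psi'_A$. Applying Lemma~\ref{glued-interpolation} with $\psi_A$ in the role of $\psi_0$ produces $f \in A$ with $\psi_A(f) = 1$, $\psi'_A(f) = 0$, and $\alpha_j(f) = \beta_j(f) = 0$ for every $j$. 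The last equalities say $\alpha_j(f) = \beta_j(f)$ for all $j$, so $f \in B$. But then
\begin{equation*}
  1 = \psi_A(f) = i^*(\psi_A)(f) = \psi_B(f) = i^*(\psi'_A)(f) = \psi'_A(f) = 0,
\end{equation*}
a contradiction. Hence $(i^*)^{-1}(\{\psi_B\}) = \{\psi_A\}$ whenever some preimage avoids $G(A,B)$, and otherwise the entire preimage is contained in $G(A,B)$.

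I do not expect any substantial obstacle; the content is essentially the ``interpolation'' Lemma~\ref{glued-interpolation}. The only care needed is bookkeeping to guarantee that the list fed into the interpolation lemma consists of pairwise distinct elements of $\M(A)$, which is handled by deleting duplicates from $\{\alpha_j,\beta_j,\psi'_A\}$ and using the two hypotheses $\psi_A \notin G(A,B)$ and $\psi_A \neq \psi'_A$ to keep $\psi_A$ isolated. Note that the argument makes no case distinction on whether $\psi'_A$ lies in $G(A,B)$ or not; the same construction of $f$ works in both cases because the required property of $f$ is simply that it vanish at every point of $G(A,B)\cup\{\psi'_A\}\setminus\{\psi_A\}$ while taking the value $1$ at $\psi_A$.
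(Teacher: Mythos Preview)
Your proof is correct and follows essentially the same approach as the paper's own proof: assume a preimage $\psi_A \notin G(A,B)$ coexists with a second preimage $\psi'_A$, invoke Lemma~\ref{glued-interpolation} to build $f\in A$ vanishing on $G(A,B)\cup\{\psi'_A\}$ with $\psi_A(f)=1$, observe $f\in B$, and derive $1=\psi_B(f)=0$. Your extra remark about removing duplicates before applying the interpolation lemma is a welcome clarification that the paper leaves implicit.
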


\begin{proof}
  Assume that we have distinct $\psi_A, \widetilde{\psi}_A \in
  (i^*)^{-1}(\{\psi_B\})$ with $\psi_A \notin G(A,B)$.  By
  Lemma~\ref{glued-interpolation}, there is an $f \in A$ such that
  $\psi_A(f) = 1$ and $\psi(f) = 0$ for $\psi \in G(A,B) \cup
  \{\widetilde{\psi}_A\}$, as $\psi_A \notin
  G(A,B)\cup\{\widetilde{\psi}_A\}$ by hypothesis.  Then $f \in B$,
  because $\alpha_j(f) = \beta_j(f) = 0$, for $j=1,\ldots,r$, and
  \begin{equation*}
    1 = \psi_A(f) = \psi_B(f) = \widetilde{\psi}_A(f) = 0,
  \end{equation*}
  because $\psi_A|B = \psi_B = \widetilde{\psi}_A|B$.  This is a
  contradiction.
\end{proof}

We next describe the derivations of $B$ in terms of the derivations
of~$A$.  This requires the following well-known characterization of
derivations: \emph{A linear functional $\eta$ on $A$ is a derivation
  at $\psi \in \M(A)$ if and only if $\eta(1) = 0$ and $\eta(fg) = 0$
  whenever $f,g\in A$ and $\psi(f) = \psi(g) = 0$.}

\begin{lemma}
  \label{glued-derivations}
  Let $B$ be a glued subalgebra of $A$, and $\eta_B$ a derivation of
  $B$ at a point $\psi_B \in \M(B)$.  Put
  \begin{equation*}
    \{\psi_A^1,\ldots,\psi_A^s\} = (i^*)^{-1}(\{\psi_B\}) \subset \M(A)
  \end{equation*}
  $($this set is finite by Lemma~\ref{glued-preimage}$)$.  Then there
  exist unique derivations $\eta_A^1,\ldots,\eta_A^s$ of $A$ at the
  points $\psi_A^1,\ldots,\psi_A^s$ respectively such that
  \begin{equation*}
    \eta_B = (\eta_A^1 + \ldots + \eta_A^s)|B.
  \end{equation*}
\end{lemma}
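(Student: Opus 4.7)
The plan is to prove existence by an explicit construction and uniqueness by a CRT-based adjustment, both leveraging separators supplied by Lemma~\ref{glued-interpolation}. Let $S := G(A, B) \cup \{\psi_A^1, \ldots, \psi_A^s\} \subset \M(A)$. For each $j$, Lemma~\ref{glued-interpolation} furnishes an element $e_j \in A$ with $\psi_A^j(e_j) = 1$ and $\psi(e_j) = 0$ for every other $\psi \in S$. I would then define
\[
\eta_A^j(f) := \eta_B\!\big(e_j^2\,(f - \psi_A^j(f))\big), \qquad f \in A,
\]
and verify the three required properties.

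First, since $e_j^2(f - \psi_A^j(f))$ vanishes on all of $S$ and in particular on every defining pair $\alpha_k, \beta_k$, this element lies in $B$, and in $M_B$ by vanishing at $\psi_A^j$. Linearity and $\eta_A^j(1) = 0$ are immediate. After expanding $fg - \psi_A^j(f)\psi_A^j(g) = (f-\psi_A^j(f))g + \psi_A^j(f)(g-\psi_A^j(g))$, the derivation law for $\eta_A^j$ reduces to the identity $\eta_B\!\big(e_j^2(f - \psi_A^j(f))(g - \psi_A^j(g))\big) = 0$, which holds because the argument factors as $[e_j(f - \psi_A^j(f))]\cdot[e_j(g - \psi_A^j(g))]$, a product of two elements of $M_B$ (by the same $S$-vanishing), hence belongs to $M_B^2$. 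Finally, set $E := \sum_j e_j^2$; it takes the value $1$ at each $\psi_A^j$ and $0$ at every other point of $G(A, B)$, so $E \in B$, $\psi_B(E) = 1$, and $E - 1 \in M_B$. For $f \in B$ with $c := \psi_B(f) = \psi_A^j(f)$ for all $j$,
\[
\sum_j \eta_A^j(f) = \eta_B((f-c)E) = \eta_B(f-c) + \eta_B\!\big((f-c)(E-1)\big) = \eta_B(f),
\]
the middle term vanishing since $(f-c)(E-1) \in M_B \cdot M_B = M_B^2$.

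For uniqueness, suppose derivations $\tilde\eta_A^j$ at $\psi_A^j$ satisfy $\sum_j \tilde\eta_A^j|_B = 0$; I would show each $\tilde\eta_A^k$ vanishes. Fix $k$ and $f \in A$. The ideals $(M_A^j)^2$ are pairwise coprime, so the Chinese Remainder Theorem supplies $g_0 \in A$ with $g_0 \equiv f \pmod{(M_A^k)^2}$ and $g_0 \equiv \psi_A^k(f) \pmod{(M_A^l)^2}$ for $l \neq k$. To push $g_0$ into $B$, I would correct by an element of $I^2 := \bigcap_j (M_A^j)^2$: for each $\gamma \in G(A, B) \setminus \{\psi_A^1,\ldots,\psi_A^s\}$, Lemma~\ref{glued-interpolation} yields $v_\gamma \in A$ equal to $\delta_\gamma$ on $S$, so $v_\gamma \in \bigcap_j M_A^j$ and $u_\gamma := v_\gamma^2 \in I^2$; then the explicit choice $h := -\sum_\gamma \gamma(g_0)\,u_\gamma$ makes $g := g_0 + h$ vanish on $G(A,B) \setminus \{\psi_A^1,\ldots,\psi_A^s\}$ while preserving $\psi_A^j(g) = \psi_A^k(f)$, so every gluing relation is satisfied and $g \in B$. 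Because $h \in I^2$, the congruences of $g_0$ carry over to $g$, giving $\tilde\eta_A^k(g) = \tilde\eta_A^k(f)$ and $\tilde\eta_A^l(g) = 0$ for $l \neq k$ (both by the vanishing of derivations on squares of maximal ideals); applying the hypothesis to $g$ yields $\tilde\eta_A^k(f) = \sum_l \tilde\eta_A^l(g) = 0$. The main technical point is the systematic use of second-order separators $e_j^2$ and $v_\gamma^2$: the squarings are precisely what place the relevant test elements into $M_B^2$ or $I^2$, and this is what makes both the Leibniz law for $\eta_A^j$ and the $B$-valued correction of $g_0$ go through.
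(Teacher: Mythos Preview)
Your proof is correct. The existence half is essentially the paper's argument: the paper also defines $\eta_A^j(f)=\eta_B\big(g_j^2(f-\psi_A^j(f))\big)$ for an interpolating element $g_j$, verifies the Leibniz rule by factoring the relevant product into two elements of $M_B$, and checks the sum formula via $g_0=\sum_j g_j^2$. The only cosmetic difference is that the paper takes $g_j=2f_j-f_j^2$ rather than your $e_j$; for existence this is immaterial.

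The uniqueness argument, however, is genuinely different. You invoke the Chinese Remainder Theorem to manufacture, for each $k$ and each $f\in A$, an element $g\in B$ congruent to $f$ modulo $(M_A^k)^2$ and to a constant modulo $(M_A^l)^2$ for $l\neq k$, then correct into $B$ using the second-order separators $v_\gamma^2$. The paper avoids CRT entirely by exploiting the specific choice $g_j=2f_j-f_j^2=1-(1-f_j)^2$: since $\psi_A^j(f_j)=1$, any derivation $\tilde\eta_A^j$ at $\psi_A^j$ kills $g_j$ (and hence $g_j^2$), which immediately gives $\tilde\eta_A^j(f)=\tilde\eta_A^j\big(g_j^2(f-\psi_A^j(f))\big)$; this last element already lies in $B$ and is annihilated by $\tilde\eta_A^l$ for $l\neq j$, so one reads off $\tilde\eta_A^j=\eta_A^j$ without further construction. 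The paper's trick is shorter and self-contained; your CRT route is more conceptual and perhaps more robust if one wanted higher-order versions, but it requires verifying coprimality of the $(M_A^j)^2$ and building the correction term $h$.
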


\begin{proof}
  For each $k = 1,\ldots,s$, use Lemma~\ref{glued-interpolation} to
  obtain an $f_k \in A$ such that $\psi_A^j(f_k) = \delta_{jk}$ and
  $\psi(f_k) = 0$ for $\psi \in G(A,B) \setminus
  \{\psi_A^1,\ldots,\psi_A^s\}$.  Put $g_k = 2f_k - f_k^2$.  Then
  $g_k$ also satisfies $\psi_A^j(g_k) = \delta_{jk}$ and $\psi(g_k) =
  0$ for $\psi \in G(A,B)\setminus\{\psi_A^1,\ldots,\psi_A^s\}$.

  Define $\eta_A^j$ by
  \begin{equation*}
    \eta_A^j(f) = \eta_B(g_j^2(f-\psi_A^j(f))),\qquad f\in A.
  \end{equation*}
  Since $\psi(g_j^2(f-\psi_A^j(f))) = 0$ for every $\psi \in G(A,B)$,
  $g_j^2(f-\psi_A^j(f)) \in B$ and so $\eta_A^j$ is well defined.

  We claim that $\eta_A^j$ is a derivation of $A$ at $\psi_A^j$.  It
  is clearly linear and $\eta_A^j(1) = 0$.  Take $f,g \in A$ with
  $\psi_A^j(f) = \psi_A^j(g) = 0$.  Then,
  \begin{equation*}
    \eta_A^j(fg) = \eta_B(g_j^2fg) = 0,
  \end{equation*}
  because $g_jf$ and $g_jg$ belong both to $B$ and $\psi_B(g_jf) =
  \psi_B(g_jg) = 0$ as $\psi(g_jf) = \psi(g_jg) = 0$ for every $\psi
  \in G(A,B) \cup \{\psi_A^1,\ldots\psi_A^s\}$.  It follows that
  $\eta_A^j$ is a derivation of $A$ at $\psi_A^j$.

  Now we check that $(\eta^1_A + \cdots + \eta_A^s)|B = \eta_B$.  For
  this, put
  \begin{equation*}
    g_0 = g_1^2 + \cdots + g_s^2.
  \end{equation*}
  Note that $\psi_A^1(g_0) = \cdots = \psi_A^s(g_0) = 1$ and
  $\psi(g_0) = 0$ for $\psi \in
  G(A,B)\setminus\{\psi_A^1,\ldots,\psi_A^s\}$.  If $\alpha_j,\beta_j$
  are as in~\eqref{eq:glued-subalgebra}, then $i^*(\alpha_j) =
  i^*(\beta_j)$.  Therefore, either $\alpha_j,\beta_j$ both belong to
  $(i^*)^{-1}(\{\psi_B\}) = \{\psi_A^1,\ldots,\psi_A^s\}$ or they both
  belong to $G(A,B)\setminus\{\psi_A^1,\ldots,\psi_A^s\}$.  Hence,
  $\alpha_j(g_0) = \beta_j(g_0)$, because $\alpha_j(g_0)$ and
  $\beta_j(g_0)$ are both $1$ or both $0$.  Therefore, $g_0 \in B$.
  Also, $\psi_B(g_0) = \psi_A^1(g_0) = 1$.

  Take any $f \in B$.  Then
  \begin{equation*}
    \sum_{j=1}^s \eta_A^j(f) = \sum_{j=1}^s \eta_B(g_j^2(f-\psi_A^j(f)))
    = \eta_B(g_0(f-\psi_B(f))) = \eta_B(f-\psi_B(f)) = \eta_B(f),
  \end{equation*}
  because $\psi_B(f-\psi_B(f)) = 0$, and $\psi_B(g_0) = 1$.  This
  shows that $(\eta_A^1+\cdots+\eta_A^s)|B = \eta_B$.

  To prove uniqueness, assume that $\widetilde{\eta}_A^1, \ldots,
  \widetilde{\eta}_A^s$ are derivations of $A$ at
  $\psi_A^1,\ldots,\psi_A^s$ respectively and such that
  $(\widetilde{\eta}_A^1+\cdots+\widetilde{\eta}_A^s)|B = \eta_B$.

  Since $\psi_A^j(g_j) = \psi_A^j(f_j) = 1$,
  \begin{equation*}
    \widetilde{\eta}_A^j(g_j^2) =
    2\widetilde{\eta}_A^j(g_j)\psi_A^j(g_j) = 2\widetilde{\eta}_A^j(g_j)
    = 2\widetilde{\eta}_A^j(2f_j-f_j^2) = 4\widetilde{\eta}_A^j(f_j) -
    4\widetilde{\eta}_A^j(f_j)\psi_A^j(f_j) = 0,
  \end{equation*}
  and so $\widetilde{\eta}_A^j(g_j^2) = 0$.  Thus for any $f \in A$,
  \begin{equation*}
    \widetilde{\eta}_A^j(f) = \widetilde{\eta}_A^j(g_j^2f) =
    \widetilde{\eta}_A^j(g_j^2(f-\psi_A^j(f))),
  \end{equation*}
  and if $j\neq k$, then
  \begin{equation*}
    \widetilde{\eta}_A^k(g_j^2(f-\psi_A^j(f))) = 0,
  \end{equation*}
  because $\psi_A^k(g_j) = \psi_A^k(g_j(f-\psi_A^j(f))) = 0$.  Also,
  since $\psi(g_j^2(f-\psi_j(f))) = 0$ for every $\psi \in G(A,B)$,
  $g_j^2(f-\psi_j(f)) \in B$.  Hence,
  \begin{equation*}
    \widetilde{\eta}_A^j(f) = \sum_{k=1}^s
    \widetilde{\eta}_A^k(g_j^2(f-\psi_A^j(f))) =
    \eta_B(g_j^2(f-\psi_A^j(f))) = \eta_A^j(f).
  \end{equation*}
  This shows that $\widetilde{\eta}_A^j = \eta_A^j$ for
  $j=1,\ldots,s$.
\end{proof}

\begin{remark*}
  If we denote by $\operatorname{Der}_\psi(A)$ the linear space of
  derivations of $A$ at $\psi\in\M(A)$, then the lemma above shows
  that
  \begin{equation*}
    \operatorname{Der}_{\psi_B}(B) \cong \bigoplus_{\psi \in
      (i^*)^{-1}(\{\psi_B\})} \operatorname{Der}_\psi(A).
  \end{equation*}
\end{remark*}

\section{The proofs of Theorems \ref{analytic-curve} and
  \ref{extension}}
\label{more-proofs}

This section is devoted to proving Theorems \ref{analytic-curve} and
\ref{extension}.  We use results by Gamelin \cite{Gamelin} on finite
codimensional subalgebras of uniform algebras.  In particular, we need
to use his concept of a \emph{$\theta$-subalgebra}, which we define
here in the context of unital commutative Banach algebras.  If $A$ is
a Banach algebra and $\theta \in \M(A)$, a $\theta$-subalgebra of $A$
is a subalgebra $H$ such that there is a chain $H = H_0 \subset H_1
\subset \cdots \subset H_n = A$ where $H_k$ is the kernel of some
derivation in $H_{k+1}$ at the point $\theta$ (the restriction map
$i^*:\M(H_{k+1}) \to \M(H_{k})$ is a bijection, so the maximal ideal
spaces of all the algebras $H_k$ can be viewed as being the same).

The main result of~\cite{Gamelin} concerning finite codimensional
subalgebras is that, roughly speaking, every such subalgebra can be
constructed by first passing to a glued subalgebra and then taking the
intersection of a finite number of $\theta_j$-subalgebras of the glued
subalgebra.

\begin{lemma}
  \label{lemma-vanishing}
  Assume that $\Phi : \overline{\Omega} \to \overline{\D}^n$ is
  admissible.  There is a finite set $X \subset \Omega$ and
  $N\in\mathbb{N}$ such that if $f \in H^\infty(\Omega)$ has a zero of
  order $N$ at each point of $X$, then $f \in \mathcal{H}_\Phi$.  The
  same is true if one replaces $H^\infty(\Omega)$ by
  $A(\overline{\Omega})$ and $\mathcal{H}_\Phi$ by $\mathcal{A}_\Phi$.
\end{lemma}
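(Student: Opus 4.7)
The plan is to combine Lemma~\ref{finite-codim} with Gamelin's structure theorem for finite codimensional subalgebras (quoted at the start of this section) and with Lemmas~\ref{lemma-glued}, \ref{lemma-zero}, \ref{glued-preimage} and~\ref{glued-derivations}. By Lemma~\ref{finite-codim}, $\mathcal{H}_\Phi$ is a closed subalgebra of finite codimension in $A:=H^\infty(\Omega)$. By Gamelin, $\mathcal{H}_\Phi$ can be written as the intersection of a glued subalgebra $B$, $\mathcal{H}_\Phi\subset B\subset A$, with finitely many $\theta_i$-subalgebras of $B$, each described by a chain of derivations. It will suffice to show that all the homomorphisms and derivations entering this description are supported at a common finite set $X\subset\Omega$, with derivation chains of uniformly bounded length $N$; then any $f\in H^\infty(\Omega)$ with a zero of order $N$ at every point of $X$ will automatically satisfy all the defining conditions and hence lie in $\mathcal{H}_\Phi$.

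The next step is to locate these points. Lemma~\ref{lemma-glued} says every pair $\alpha_j\ne\beta_j$ of complex homomorphisms of $A$ agreeing on $\mathcal{H}_\Phi$ must be point evaluations at distinct $z_j,z_j'\in\Omega$ with $\Phi(z_j)=\Phi(z_j')$, and by admissibility such pairs are finite in number. Lemma~\ref{lemma-zero} says every non-zero derivation of $A$ vanishing on $\mathcal{H}_\Phi$ is at an interior evaluation $\psi_{z_0}$ with $\Phi'(z_0)=0$, which is again a finite set by admissibility. For a derivation $\eta_B$ of $B$ at some $\theta_i$, Lemma~\ref{glued-preimage} restricts the preimages $(i^*)^{-1}(\{\theta_i\})$ to lie either inside $G(A,B)$ or to consist of a single point outside it, and in either case Lemma~\ref{glued-derivations} lifts $\eta_B$ to a sum of $A$-derivations at those preimages. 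The composition of all these constraints forces every $\theta_i$ and every glued pair to live on the finite set $X$ consisting of all points of $\Omega$ that are either glued to another point by $\Phi$ or zeros of $\Phi'$.

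Finally, I would choose $N$ larger than the codimension of $\mathcal{H}_\Phi$ in $A$, which in particular dominates the length of every derivation chain in the Gamelin decomposition. A short inductive calculation, analogous to the argument showing that $f\mapsto f'(z_0)$ is the essentially unique derivation of $A$ at $\psi_{z_0}\in\Omega$, identifies the successive derivations in a chain at $\psi_{z_0}$ with linear functionals that depend only on the successive Taylor coefficients of $f$ at $z_0$. Hence if $f$ vanishes to order $N$ at every $z\in X$, every derivation condition and every glued equality $f(z_j)=f(z_j')$ is satisfied automatically, so $f\in\mathcal{H}_\Phi$. The argument for $\mathcal{A}_\Phi\subset A(\overline\Omega)$ is word-for-word the same. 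The main technical obstacle I expect will be rigorously controlling the derivation chains inside the glued subalgebra $B$ rather than directly inside $A$: one needs to track the lifting of derivations from $B$ to $A$ through Lemma~\ref{glued-derivations} at each level of the chain, ensuring uniformly that the chain lengths are bounded by the global codimension and that no derivation in the chain is \emph{supported on a boundary point}, the latter being precisely what is excluded by condition (e) of admissibility together with Lemma~\ref{lemma-zero}.
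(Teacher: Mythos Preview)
Your overall strategy matches the paper's: invoke Gamelin's structure theorem to write $\mathcal{H}_\Phi = H_1\cap\cdots\cap H_r$ with each $H_j$ a $\theta_j$-subalgebra of a glued subalgebra $H_0\subset H^\infty(\Omega)$, then use Lemmas~\ref{lemma-glued}, \ref{lemma-zero}, \ref{glued-preimage}, \ref{glued-derivations} to show that the glued points and all preimages of the $\theta_j$ are point evaluations at a finite set $X\subset\Omega$. This part is essentially identical to what the paper does.

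The divergence is in the last step. You propose to climb the derivation chains defining each $H_j$ and argue, by an ``inductive calculation'', that every derivation appearing there is a linear functional in the Taylor jets at points of $X$; you correctly flag this as the main obstacle, since Lemma~\ref{glued-derivations} only lifts derivations from a \emph{glued} subalgebra, not from the intermediate algebras $H_{k}$ in the chain. The paper bypasses this analysis entirely with a factorization trick (Gamelin's Lemma~9.3): if $f$ vanishes to order $2^{k_j}$ on $X$, write $f=f_1\cdots f_{2^{k_j}}$ with each $f_i\in H^\infty(\Omega)$ vanishing on $X$; then each $f_i\in H_0$ and $\theta_j(f_i)=0$, and the Leibniz rule $\eta(gh)=\eta(g)\theta(h)+\theta(g)\eta(h)$ forces a product of two such elements one step down the chain, so $2^{k_j}$ factors land in $H_j$. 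Taking $N=\max_j 2^{k_j}$ finishes. This replaces your inductive analysis of higher-order derivations (which can be carried out, but is genuinely more delicate when $\theta_j$ has several preimages) by a two-line multiplicative argument, and in particular it never needs to lift any derivation beyond the first one in each chain---that first derivation is used only to pin down $\theta_j$ inside $\Omega$, exactly as you do.
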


\begin{proof}
  According to~\cite{Gamelin}*{Theorem 9.8}, there is a glued
  subalgebra $H_0$ of $H^\infty(\Omega)$ and a finite family of
  $\theta_j$-subalgebras $H_j$ of $H_0$ such that
  \begin{equation*}\mathcal{H}_\Phi =
    H_1\cap \cdots \cap H_r.\end{equation*}
  (Here $\theta_j \in \M(H_0)$.)

  Put $G = G(H^\infty(\Omega),H_0)$.  By Lemma~\ref{lemma-glued},
  $\psi(\z) \in \Omega$ for every $\psi \in G$.  Consider the map
  $i^*:\M(H^\infty(\Omega)) \to \M(H_0)$ and put $Y =
  (i^*)^{-1}(\{\theta_1,\ldots,\theta_r\})$.  By
  Lemma~\ref{glued-preimage}, $Y$ is a finite set.

  If $\psi \in Y$, then $\psi(\z)\in\Omega$, since either $\psi \in G$
  or $\psi$ is the unique preimage of some $\theta_j$.  In the latter
  case, since $\mathcal{H}_\Phi \subset H_j$, there is some derivation
  $\eta$ of $H_0$ at $\theta_j$ such that $\eta|\mathcal{H}_\Phi = 0$
  but $\eta\neq 0$.  By Lemma~\ref{glued-derivations}, $\eta$ extends
  to a derivation $\widehat{\eta}$ of $H^\infty(\Omega)$ at $\psi$.
  By Lemma~\ref{lemma-zero}, $\psi(\z) \in \Omega$, because
  $\widehat{\eta}|\mathcal{H}_\Phi = 0$.

  We claim that $X = \{\psi(\z) : \psi \in G\cup Y\} \subset \Omega$
  is the desired set.  Note that $X$ is finite.  Also, if $f \in
  H^\infty(\Omega)$ vanishes on $X$ then $f \in H_0$, because $\psi(f)
  = 0$ for every $\psi\in G$.

  Let $k_j$ be the codimension of $H_j$ in $H_0$.  Assume that $f \in
  H^\infty(\Omega)$ has a zero of order $2^{k_j}$ at every point of
  $X$.  Then $f$ can be factored as $f = f_1\cdots f_{2^{k_j}}$, where
  each of the $2^{k_j}$ functions belongs to $H^\infty(\Omega)$ and
  vanishes on $X$.  This implies $f \in H_j$ by~\cite{Gamelin}*{Lemma
    9.3}.  Thus, the lemma holds with $N = \max_j 2^{k_j}$.

  The proof for $A(\overline{\Omega})$ is similar.
\end{proof}

\begin{proof}[of Theorem~\ref{analytic-curve}]
  We only show that $\Phi^* H^\infty(\V) = \mathcal{H}_\Phi$ as the
  proof of $\Phi^*A(\overline{\Omega}) = \mathcal{A}_\Phi$ is
  identical.  The inclusion $\mathcal{H}_\Phi \subset
  \Phi^*H^\infty(\V)$ follows from~\eqref{eq:inclusions}.  We examine
  the reverse inclusion.

  Let $n \in \N$ be the integer and $X = \{z_1,\ldots,z_r\} \subset
  \Omega$ the finite set from Lemma~\ref{lemma-vanishing}.  Put $w_j =
  \Phi(z_j) \in \V$, $j=1,\ldots,r$.  Take $F \in H^\infty(\V)$ and
  observe that $F$ extends to an analytic function, which we also
  denote by $F$, on a neighborhood in $\C^n$ of each of the points
  $w_j$, $j=1,\ldots,r$.  Let $P \in \C[z_1,\ldots,z_n]$ be a
  polynomial such that
  \begin{equation*}
    D^\alpha P(z_j) = D^\alpha F(z_j), \qquad 0\leq|\alpha|\leq N,\
    j=1,\ldots,r.
  \end{equation*}

  Then $\Phi^*(F-P) = F\circ\Phi - P\circ\Phi$ has a zero of order $N$
  at each point of $X$, so $\Phi^*(F-P) \in \mathcal{H}_\Phi$.  Also,
  $\Phi^* P \in \mathcal{H}_\Phi$, because it is a polynomial in
  $\varphi_1,\ldots,\varphi_n$.  It follows that $\Phi^* F \in
  \mathcal{H}_\Phi$.
\end{proof}

\begin{proof}[of Theorem~\ref{extension}]
  By Lemma~\ref{finite-codim}, $\mathcal{H}_\Phi$ is a closed subspace
  of $H^\infty(\Omega)$.  Define an operator $L : \mathcal{H}_\Phi \to
  \mathcal{H}_\Phi$ by $Lf = \sum_{k=1}^nF_k(f) \circ \varphi_k$,
  where $F_1,\ldots,F_n$ are the operators that appear in
  Theorem~\ref{compact-operator}.  Since $I - L$ is compact, by the
  Fredholm theory there are bounded operators $R,P : \mathcal{H}_\Phi
  \to \mathcal{H}_\Phi$ such that $P$ has finite rank and $I = LR +
  P$.  The operator $P$ can be written as
  \begin{equation*}
    Pf = \sum_{k=1}^r \alpha_k(f)g_k,\qquad f\in H^\infty(\Omega),
  \end{equation*}
  for some $g_k \in \mathcal{H}_\Phi$ and $\alpha_k \in
  \mathcal{H}_\Phi^*$.  The functions $g_k$ can be expressed as
  \begin{equation*}
    g_k = \sum_{j=1}^l \prod_{i=1}^n f_{j,i,k}\circ\varphi_i
  \end{equation*}
  (so as to have the same number of multiplicands and terms in these
  sums, we can take some of the $f_{j,i,k}$ equal to $0$ or $1$).

  Take an $f \in H^\infty(\V)$.  By Theorem~\ref{analytic-curve},
  $\Phi^* f \in \mathcal{H}_\Phi$.  Put
  \begin{equation*}
    F(z_1,\ldots,z_n) = \sum_{k=1}^n F_k(R \Phi^*f)(z_k) +
    \sum_{k=1}^r\sum_{j=1}^l \alpha_k(\Phi^* f)\prod_{i=1}^n
    f_{j,i,k}(z_i).
  \end{equation*}
  Then $\Phi^* F = F \circ \Phi = LR\Phi^*f + P\Phi^*f = \Phi^*f$, so
  $F|\V = f$.

  If $g \in H^\infty(\D)$, then $\|g(z_k)\|_{\SA(\D^n)} =
  \|g\|_\infty$, so
  \begin{equation*}
    \begin{split}
      \|F\|_{\SA(\D^n)} &\leq \sum_{k=1}^n \|F_k(R\Phi^*f)\|_\infty +
      \sum_{k=1}^r\sum_{j=1}^l |\alpha_k(\Phi^*f)|\cdot\prod_{i=1}^n
      \|f_{j,i,k}\|_\infty\\
      &\leq C\|\Phi^*f\|_{H^\infty(\D)} = C\|f\|_{H^\infty(\V)}.
    \end{split}
  \end{equation*}
\end{proof}

\section{Continuous families of admissible functions}
\label{continuous-families}

In this section we prove a lemma concerning a family of admissible
functions $\{\Phi_\varepsilon\}$ which depends continuously on
$\varepsilon$.  The lemma shows that one can get operators
$F_k^\varepsilon$ as in Theorem~\ref{compact-operator} that satisfy
certain continuity properties in $\varepsilon$.  This result is used
in an application to the study of complete $K$-spectral sets in our
forthcoming article~\cite{Article2}.

\begin{lemma}
  \label{lema-13}
  Let $\Phi_\varepsilon= \big(\varphi_1^\varepsilon, \dots,
  \varphi_n^\varepsilon\big): \overline{\Omega}\to\overline{\D}^n$,
  $0\leq\varepsilon\leq\varepsilon_0$ be a collection of functions.
  Assume that $\Psi_\varepsilon$ is admissible for every
  $\varepsilon$, and, moreover, that one can choose sets $\Omega_k$ in
  the definition of an admissible collection so as not to depend on
  $\varepsilon$.  Assume that $\varphi_k^\varepsilon \in
  \Cont^{1+\alpha}(\Omega_k)$, with $0 < \alpha < 1$, and that the
  mapping $\varepsilon \mapsto \varphi_k^\varepsilon$ is continuous
  from $[0,\varepsilon_0]$ to $\Cont^{1+\alpha}(\Omega_k)$.

  Then there exist bounded linear operators $F_k^\varepsilon :
  A(\overline{\Omega}) \to A(\overline{\D})$, such that for
  \begin{equation*}
    L_\varepsilon(f) = \sum
    F_k^\varepsilon(f)\circ\varphi_k^\varepsilon,
  \end{equation*}
  and $0\leq\varepsilon\leq \varepsilon_0$, $L_\varepsilon - I$ is a
  compact operator on $A(\overline{\Omega})$, the mapping $\varepsilon
  \mapsto L_\varepsilon$ is norm continuous, and $\|F_k^\varepsilon\|
  \leq C$ for $k = 1,\ldots,n$, where $C$ is a constant independent of
  $k$ and $\varepsilon$.
\end{lemma}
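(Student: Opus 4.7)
The plan is to replay the construction of the operators from Theorem~\ref{compact-operator}, substituting $\varphi_k^\varepsilon$ for $\varphi_k$ throughout, and then to track every estimate as an $\varepsilon$-dependent quantity to obtain both uniformity of norms and norm continuity of $\varepsilon\mapsto L_\varepsilon$. As a first step, I would fix once and for all, independently of $\varepsilon$, the geometric scaffolding that appears in that proof: the corner points $z_k$ and arcs $J_k, J_k^{\pm}$, the disks $V_k$, the rotations $R_k$ together with $J_k^R$, the cutoff functions $\eta_k,\nu_k$, and the circular sectors $S_k(z_0)$ and $S, S'$ of Claim~\ref{claim1}. The invariance of the $\Omega_k$ is by hypothesis, and the properties (d)--(f) of admissibility needed to make such a uniform choice (conformality at the corners, the lower bound $|(\varphi_k^\varepsilon)'|\geq C>0$ on $J_k$, and injectivity of $\varphi_k^\varepsilon|J_k$) persist uniformly in $\varepsilon\in[0,\varepsilon_0]$ by continuity of $\varepsilon\mapsto\varphi_k^\varepsilon$ in $\Cont^{1+\alpha}(\Omega_k)$ together with compactness of $[0,\varepsilon_0]$. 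Then I would define $F_k^\varepsilon$ by formula \eqref{eq:thm2-**} with $\varphi_k^\varepsilon$ in place of $\varphi_k$.

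The uniform bound $\|F_k^\varepsilon\|\leq C$ follows from inspection of Lemma~\ref{weak-singularity}, Lemma~\ref{diff-compact} and Lemma~\ref{lemma-simple} with parameters: the constant in the weak-singularity estimate $|G_\varepsilon(\zeta,z)|\leq C|\zeta-z|^{\alpha-1}$ depends only on the Hölder $\alpha$-seminorm of $(\varphi_k^\varepsilon)'$ and on the uniform lower bound \eqref{eq:lemma4-*}, both of which are uniform in $\varepsilon$. The Phragmén--Lindelöf argument of Claim~\ref{claim1} is likewise uniform once the sectors $S,S'$ are fixed as above. For norm continuity I would split
\begin{equation*}
L_{\varepsilon_1}f-L_{\varepsilon_2}f = \sum_{k=1}^n\bigl(F_k^{\varepsilon_1}(f)-F_k^{\varepsilon_2}(f)\bigr)\circ\varphi_k^{\varepsilon_1} + \sum_{k=1}^n F_k^{\varepsilon_2}(f)\circ\varphi_k^{\varepsilon_1} - F_k^{\varepsilon_2}(f)\circ\varphi_k^{\varepsilon_2}.
\end{equation*}
The first sum is controlled by $\|F_k^{\varepsilon_1}-F_k^{\varepsilon_2}\|$, which, by direct estimation of the kernel differences in \eqref{eq:thm2-**}, is of order $\|\varphi_k^{\varepsilon_1}-\varphi_k^{\varepsilon_2}\|_{\Cont^{1+\alpha}(\Omega_k)}$ and so tends to $0$ by hypothesis.

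The main obstacle is the second sum. Controlling it requires showing that the family $\{F_k^{\varepsilon}(f):\varepsilon\in[0,\varepsilon_0],\ \|f\|_{A(\overline\Omega)}\leq 1\}$ is equicontinuous on $\overline{\D}$, so that composition with $\varphi_k^{\varepsilon_1}$, which differs from $\varphi_k^{\varepsilon_2}$ by a small amount in $\Cont(\overline{\Omega})$, produces an arbitrarily small change uniformly in $f$. I would obtain this equicontinuity by rerunning the continuity argument at the end of the proof of Theorem~\ref{compact-operator}, using Lemma~\ref{lemma-A} with its explicit Hölder quantification: if one replaces the modified Cauchy integrals in $F_k^\varepsilon$ by ordinary Cauchy integrals $\widetilde F_k^\varepsilon$, the difference operator $f\mapsto F_k^\varepsilon(f)\circ\varphi_k^\varepsilon-\widetilde F_k^\varepsilon(f)$ is weakly singular with Hölder-continuous kernel uniformly in $\varepsilon$, while Lemma~\ref{lemma-A} provides a uniform Hölder $\alpha$-modulus for the Cauchy integrals themselves. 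Combining these gives a common modulus of continuity on $\overline{\D}$ for the whole family $\{F_k^\varepsilon(f)\}$, and hence the desired norm continuity of $L_\varepsilon$. Compactness of $L_\varepsilon-I$ for each fixed $\varepsilon$ is then immediate from Theorem~\ref{compact-operator} applied to $\Phi_\varepsilon$.
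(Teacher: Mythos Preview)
Your approach to the uniform bound $\|F_k^\varepsilon\|\le C$ is essentially the paper's (though note that the sectors $S,S'$ in Claim~\ref{claim1} have vertex $w_k^\varepsilon=\varphi_{k+1}^\varepsilon(z_k)$ and hence \emph{cannot} be fixed independently of $\varepsilon$; what you can fix is their opening angles and radii, and this suffices). The real problem is your argument for the norm continuity of $\varepsilon\mapsto L_\varepsilon$, which has a gap in both halves of your decomposition.

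For the first sum you claim that $\|F_k^{\varepsilon_1}-F_k^{\varepsilon_2}\|_{A(\overline\Omega)\to A(\overline\D)}$ is small by ``direct estimation of the kernel differences''. But the kernels $(\varphi_k^{\varepsilon_i})'(\zeta)/(\varphi_k^{\varepsilon_i}(\zeta)-w)$, viewed as functions of $\zeta\in J_k$ for $w\in\T$, have poles at \emph{different} points $\zeta=(\varphi_k^{\varepsilon_i})^{-1}(w)$; their difference is not weakly singular and no Lemma~\ref{weak-singular-integral}-type bound is available. For the second sum you appeal to equicontinuity of $\{F_k^\varepsilon(f):\|f\|\le1\}$ on $\overline{\D}$ via Lemma~\ref{lemma-A}, but that lemma requires H\"older input: for a general $f\in A(\overline\Omega)$ the density $\eta_k f$ is merely continuous, and the paper obtains $F_k(f)\in A(\overline\D)$ only by density plus boundedness, which yields no common modulus of continuity. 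In short, your splitting separates the $\varepsilon$-dependence in $F_k^\varepsilon$ from that in $\varphi_k^\varepsilon$, destroying precisely the cancellation that makes the problem tractable.

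The paper avoids both difficulties by never estimating $F_k^{\varepsilon_1}-F_k^{\varepsilon_2}$ at all. Instead it uses the identity \eqref{eq:thm2-**2}, $I-L_\varepsilon=\sum_k A_k^\varepsilon+\sum_k B_k^\varepsilon(\nu_k\,\cdot)$, where $A_k^\varepsilon,B_k^\varepsilon$ are weakly singular integral operators on $\overline\Omega$ whose kernels involve $\dfrac{(\varphi_k^\varepsilon)'(\zeta)}{\varphi_k^\varepsilon(\zeta)-\varphi_k^\varepsilon(z)}$. After composition with $\varphi_k^\varepsilon$ the singularity sits at $\zeta=z$ for every $\varepsilon$, so the kernel of $A_k^{\varepsilon_1}-A_k^{\varepsilon_2}$ is controlled by a dedicated estimate (Lemma~\ref{lemma-epsilon}):
\[
\left|\frac{(\varphi_k^{\varepsilon_1})'(\zeta)}{\varphi_k^{\varepsilon_1}(\zeta)-\varphi_k^{\varepsilon_1}(z)}-\frac{(\varphi_k^{\varepsilon_2})'(\zeta)}{\varphi_k^{\varepsilon_2}(\zeta)-\varphi_k^{\varepsilon_2}(z)}\right|\le C\,\|(\varphi_k^{\varepsilon_1})'-(\varphi_k^{\varepsilon_2})'\|_{\Cont^\alpha}\,|\zeta-z|^{\alpha-1},
\]
giving $\|A_k^{\varepsilon_1}-A_k^{\varepsilon_2}\|\to0$ (and similarly for $B_k$) with no equicontinuity needed. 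You should reorganise your continuity argument along these lines.
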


The proof of Lemma~\ref{lema-13} uses the following technical fact:

\begin{lemma}
  \label{lemma-epsilon}
  Let $\Omega$ be a bounded domain satisfying the inner chord-arc
  condition,
  $\{\varphi_\varepsilon\}_{0\leq\varepsilon\leq\varepsilon_0} \subset
  A(\overline{\Omega})$ with $\varphi_\varepsilon'$ of class H\"older
  $\alpha$ in $\Omega$ and such that the mapping $\varepsilon \mapsto
  \varphi_\varepsilon$ is continuous from $[0,\varepsilon_0]$ to
  $\Cont^{1+\alpha}(\Omega)$.  Let $K \subset \overline{\Omega}$ be
  compact.  Assume that $\varphi_\varepsilon(\zeta) \neq
  \varphi_\varepsilon(z)$ if $\zeta\in K$, $z \in
  \overline{\Omega}\setminus\{\zeta\}$ and
  $0\leq\varepsilon\leq\varepsilon_0$.  Assume also that for each
  $0\leq\varepsilon\leq 1$, $\varphi_\varepsilon'$ does not vanish
  in~$K$.  Then for $\zeta \in K$, $z \in
  \overline{\Omega}\setminus\{\zeta\}$, and
  $\varepsilon,\delta\in[0,\varepsilon_0]$,
  \begin{equation*}
    \left|
      \frac{\varphi_{\varepsilon}'(\zeta)}{\varphi_{\varepsilon}(\zeta)
        - \varphi_{\varepsilon}(z)} -
      \frac{\varphi_{\delta}'(\zeta)}{\varphi_{\delta}(\zeta) -
        \varphi_{\delta}(z)} \right| \leq C\|\varphi_{\varepsilon}' -
    \varphi_{\delta}'\|_{\Cont^\alpha}|\zeta - z|^{\alpha-1} .
  \end{equation*}
\end{lemma}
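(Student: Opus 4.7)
The plan is to follow the scheme of Lemma~\ref{weak-singularity}, writing the difference of the two modified Cauchy kernels over a common denominator and controlling numerator and denominator separately, while ensuring that every constant that appears is uniform in $\varepsilon,\delta \in [0,\varepsilon_0]$. I would start from the identity
\begin{equation*}
  \frac{\varphi_\varepsilon'(\zeta)}{\varphi_\varepsilon(\zeta) - \varphi_\varepsilon(z)} - \frac{\varphi_\delta'(\zeta)}{\varphi_\delta(\zeta) - \varphi_\delta(z)} = \frac{N(\zeta,z)}{[\varphi_\varepsilon(\zeta) - \varphi_\varepsilon(z)]\,[\varphi_\delta(\zeta) - \varphi_\delta(z)]},
\end{equation*}
where $N(\zeta,z) = \varphi_\varepsilon'(\zeta)[\varphi_\delta(\zeta)-\varphi_\delta(z)] - \varphi_\delta'(\zeta)[\varphi_\varepsilon(\zeta)-\varphi_\varepsilon(z)]$.

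For the denominator I would repeat the argument around \eqref{eq:lemma4-*}, but now applied to the jointly continuous function $h_\varepsilon(\zeta,z) = (\varphi_\varepsilon(\zeta) - \varphi_\varepsilon(z))/(\zeta-z)$, extended by $\varphi_\varepsilon'(\zeta)$ on the diagonal, viewed on the compact set $[0,\varepsilon_0] \times K \times \overline{\Omega}$. By hypothesis it never vanishes, and the compactness then yields a uniform lower bound $|h_\varepsilon(\zeta,z)| \geq C_1 > 0$, so that $|\varphi_\varepsilon(\zeta) - \varphi_\varepsilon(z)| \geq C_1|\zeta-z|$ and likewise for $\varphi_\delta$. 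Hence the denominator has magnitude at least $C_1^2|\zeta-z|^2$.

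The key algebraic step for the numerator is to set $g = \varphi_\varepsilon - \varphi_\delta$; expanding, the terms involving only $\varphi_\delta, \varphi_\delta'$ cancel, giving $N(\zeta,z) = g'(\zeta)[\varphi_\delta(\zeta)-\varphi_\delta(z)] - \varphi_\delta'(\zeta)[g(\zeta)-g(z)]$. Writing both brackets as integrals of the respective derivatives along a chord-arc path $\gamma(\zeta,z) \subset \Omega$, and then adding and subtracting $\varphi_\delta'(\zeta) g'(\zeta)$ inside the integrand, one obtains
\begin{equation*}
  N(\zeta,z) = \int_{\gamma(\zeta,z)} \bigl[ g'(\zeta)\bigl(\varphi_\delta'(u) - \varphi_\delta'(\zeta)\bigr) - \varphi_\delta'(\zeta)\bigl(g'(u) - g'(\zeta)\bigr) \bigr]\,du.
\end{equation*}
The H\"older continuity of $\varphi_\delta'$ and $g'$, the estimate $|g'(\zeta)| \leq \|g'\|_{\Cont^\alpha}$, and the chord-arc bound $|\gamma(\zeta,z)| \leq C|\zeta-z|$ (which also forces $|u-\zeta|^\alpha \leq C|\zeta-z|^\alpha$ along $\gamma$) then bound this integral by $C\|\varphi_\varepsilon' - \varphi_\delta'\|_{\Cont^\alpha}|\zeta-z|^{1+\alpha}$. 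Dividing by the denominator estimate yields the stated inequality.

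The main obstacle, and what dictates the hypotheses, is obtaining constants that are uniform in $\varepsilon$ and $\delta$. Continuity of $\varepsilon \mapsto \varphi_\varepsilon$ in the $\Cont^{1+\alpha}(\Omega)$ norm on the compact interval $[0,\varepsilon_0]$ simultaneously supplies a uniform bound on $\|\varphi_\delta'\|_\infty$ and on the H\"older seminorm of $\varphi_\delta'$, and it is precisely the joint continuity of $h_\varepsilon(\zeta,z)$ in $(\varepsilon,\zeta,z)$ that runs the compactness argument giving the uniform lower bound on the denominator; a merely pointwise-in-$\varepsilon$ hypothesis would not suffice.
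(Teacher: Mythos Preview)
Your proof is correct and follows essentially the same route as the paper: the uniform lower bound on the denominator via compactness on $[0,\varepsilon_0]\times K\times\overline{\Omega}$, the integral representation of the numerator along a chord-arc path, and the H\"older estimate of the integrand are all identical in spirit. The only cosmetic difference is in the algebraic splitting of the integrand: you substitute $g=\varphi_\varepsilon-\varphi_\delta$ first and then add and subtract $\varphi_\delta'(\zeta)g'(\zeta)$, whereas the paper works directly with $\varphi_\varepsilon'(\zeta)\varphi_\delta'(u)-\varphi_\delta'(\zeta)\varphi_\varepsilon'(u)$ and rearranges it as $[\varphi_\varepsilon'(u)-\varphi_\varepsilon'(\zeta)][\varphi_\delta'(\zeta)-\varphi_\varepsilon'(\zeta)]+\varphi_\varepsilon'(\zeta)[(\varphi_\varepsilon'-\varphi_\delta')(u)-(\varphi_\varepsilon'-\varphi_\delta')(\zeta)]$; both decompositions yield the same bound $C\|\varphi_\varepsilon'-\varphi_\delta'\|_{\Cont^\alpha}|u-\zeta|^\alpha$.
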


\begin{proof}
  First check that
  \begin{equation}
    \label{eq:lemma-epsilon-*}
    \left|\frac{\varphi_\varepsilon(\zeta) -
        \varphi_\varepsilon(z)}{\zeta - z}\right| \geq C > 0,
    \qquad
    \zeta \in K,\ z \in \overline{\Omega}\setminus\{\zeta\},\
    0\leq\varepsilon\leq\varepsilon_0.
  \end{equation}
  To see this, put
  \begin{equation*}
    h(\zeta,z,\varepsilon) =
    \begin{cases}
      \frac{\varphi_\varepsilon(\zeta) - \varphi_\varepsilon(z)}{\zeta
        - z}, & \text{if } \zeta \in K, z \in
      \overline{\Omega}\setminus\{\zeta\},\\
      \varphi_\varepsilon'(\zeta), & \text{if } \zeta = z \in K,
    \end{cases}
  \end{equation*}
  which is continuous on the compact set $K \times \overline{\Omega}
  \times [0,\varepsilon_0]$.  As $h$ does not vanish, $|h| \geq C >
  0$, implying~\eqref{eq:lemma-epsilon-*}.

  Since $|\psi(u) - \psi(\zeta)| \leq
  \|\psi\|_{\Cont^\alpha}|u-\zeta|^\alpha$ and $|\psi(u)| \leq
  \|\psi\|_{\Cont^\alpha}$,
  \begin{equation*}
    \begin{split}
      |\varphi_{\varepsilon}'&(\zeta)\varphi_{\delta}'(u) -
      \varphi_{\delta}'(\zeta)\varphi_{\varepsilon}'(u)|\\
      &= \Big|[\varphi_{\varepsilon}'(u) -
      \varphi_{\varepsilon}'(\zeta)] [\varphi_{\delta}'(\zeta) -
      \varphi_{\varepsilon}'(\zeta)] + \varphi_{\varepsilon}'(\zeta)
      [\varphi_{\varepsilon}'(u) - \varphi_{\varepsilon}'(\zeta)
      + \varphi_{\delta}'(\zeta) - \varphi_{\delta}'(u)]\Big|\\
      &\leq \Big|\varphi_{\varepsilon}'(u) -
      \varphi_{\varepsilon}'(\zeta)\Big| \Big|\varphi_{\delta}'(\zeta)
      - \varphi_{\varepsilon}'(\zeta)\Big| +
      \big|\varphi_{\varepsilon}'(\zeta)\big|\, \Big|
      (\varphi_\varepsilon' - \varphi_\delta')(u) -
      (\varphi_\varepsilon' - \varphi_\delta')(\zeta)
      \Big|\\
      &\leq \|\varphi_{\varepsilon}'\|_{\Cont^\alpha}|u-\zeta|^\alpha
      \|\varphi_{\varepsilon}'-\varphi_{\delta}'\|_{\Cont^\alpha} +
      \|\varphi_{\varepsilon}'\|_{\Cont^\alpha}
      \|\varphi_{\varepsilon}'-\varphi_{\delta}'\|_{\Cont^\alpha}
      |u-\zeta|^\alpha.
    \end{split}
  \end{equation*}
  But
  \begin{equation*}
    \varphi_{\varepsilon}'(\zeta)[\varphi_{\delta}(\zeta)-
    \varphi_{\delta}(z)]
    -
    \varphi_{\delta}'(\zeta)[\varphi_{\varepsilon}(\zeta)-
    \varphi_{\varepsilon}(z)]
    =
    \int_{\gamma(z,\zeta)}\left(
      \varphi_{\varepsilon}'(\zeta)\varphi_{\delta}'(u) -
      \varphi_{\delta}'(\zeta)\varphi_{\varepsilon}'(u)\right) \,du,
  \end{equation*}
  and so
  \begin{equation*}
    \begin{split}
      \Big| \varphi_{\varepsilon}'(\zeta)[\varphi_{\delta}(\zeta)-
      \varphi_{\delta}(z)] -
      \varphi_{\delta}'(\zeta)[\varphi_{\varepsilon}(\zeta)-
      \varphi_{\varepsilon}(z)] \Big| &\leq
      C\|\varphi_{\varepsilon}'-\varphi_{\delta}'\|_{\Cont^\alpha}
      \int_{\gamma(z,\zeta)} |u-\zeta|^\alpha\,|du| \\
      &\leq C\|\varphi_{\varepsilon}'-
      \varphi_{\delta}'\|_{\Cont^\alpha} |z-\zeta|^{\alpha+1}.
    \end{split}
  \end{equation*}

  Combining this with~\eqref{eq:lemma-epsilon-*},
  \begin{equation*}
    \begin{split}
      \left| \frac{\varphi_{\varepsilon}'(\zeta)}
        {\varphi_{\varepsilon}(\zeta) - \varphi_{\varepsilon}(z)} -
        \frac{\varphi_{\delta}'(\zeta)}{\varphi_{\delta}(\zeta) -
          \varphi_{\delta}(z)} \right| &= \left|
        \frac{\varphi_{\varepsilon}'(\zeta)[\varphi_{\delta}
          (\zeta)-\varphi_{\delta}(z)] -
          \varphi_{\delta}'(\zeta)[\varphi_{\varepsilon}
          (\zeta)-\varphi_{\varepsilon}(z)]} {
          [\varphi_{\varepsilon}(\zeta)-\varphi_{\varepsilon}(z)]
          [\varphi_{\delta}(\zeta)-\varphi_{\delta}(z)] } \right|
      \\
      &\leq C\|\varphi_{\varepsilon}'-
      \varphi_{\delta}'\|_{\Cont^\alpha} \frac{|z-\zeta|^{\alpha+1}}{
        |\varphi_{\varepsilon}(\zeta)-\varphi_{\varepsilon}(z)|
        |\varphi_{\delta}(\zeta)-\varphi_{\delta}(z)| }
      \\
      &\leq C\|\varphi_{\varepsilon}'-
      \varphi_{\delta}'\|_{\Cont^\alpha} |z - \zeta|^{\alpha-1}.
    \end{split}
  \end{equation*}
\end{proof}

\begin{proof}[of Lemma~\ref{lema-13}]
  The construction of the functions $\eta_k$ and $\nu_k$ used in the
  proof of Theorem~\ref{compact-operator} depends solely on the
  geometry of $\Omega$, and not on the functions $\varphi_k$.  So
  define $F_k^\varepsilon$ by equation~\eqref{eq:thm2-**}, replacing
  $\varphi_k$ with $\varphi_k^\varepsilon$.  Then $L_\varepsilon - I$
  is compact by the proof of Theorem~\ref{compact-operator}.  We also
  define $A_k^\varepsilon$ and $B_k^\varepsilon$ by
  equation~\eqref{eq:AkBk}, with $\varphi_k^\varepsilon$ instead of
  $\varphi_k$.  Then by~\eqref{eq:thm2-**2},
  \begin{equation*}
    L_{\varepsilon}(f) - L_{\delta}(f) = \sum_{k=1}^n
    (A_k^{\delta}-A_k^{\varepsilon})(f) +
    (B_k^{\delta}-B_k^{\varepsilon})(\nu_kf).
  \end{equation*}

  Note that
  \begin{equation*}
    (A_k^{\delta}-A_k^{\varepsilon})(f)(z) = \CauchyMod f
    {\varphi_k^{\varepsilon}} {J_k}(z) - \CauchyMod f
    {\varphi_k^{\delta}} {J_k}(z) = \int_{J_k} \Big(
    \frac{(\varphi_k^\varepsilon)'(\zeta)}{\varphi_k^{\varepsilon}(\zeta)
      - \varphi_k^{\varepsilon}(z)} -
    \frac{(\varphi_k^\delta)'(\zeta)}{\varphi_k^{\delta}(\zeta) -
      \varphi_k^{\delta}(z)}\Big) f(\zeta)\,d\zeta.
  \end{equation*}
  Using Lemma~\ref{lemma-epsilon} and the fact that
  \begin{equation*}
    \int_{J_k} |z-\zeta|^{\alpha-1}\,|d\zeta| < \infty,
  \end{equation*}
  we have $\|A_k^{\delta} - A_k^{\varepsilon}\| \leq
  C\|(\varphi_k^\varepsilon)' - (\varphi_k^\delta)'\|_{\Cont^\alpha}$.
  Also, $\|B_k^{\delta} - B_k^{\varepsilon}\| \leq
  C\|(\varphi_k^\varepsilon)' - (\varphi_k^\delta)'\|_{\Cont^\alpha}$
  by similar reasoning.  These inequalities imply that $L_\varepsilon$
  depends continuously on $\varepsilon$ in the norm topology.

  To see that $\|F_k^\varepsilon\|\leq C$, with $C$ independent of
  $\varepsilon$, one must examine the proofs of
  Theorem~\ref{compact-operator} and Lemma~\ref{lemma-simple} to check
  that the constants that bound the operators which appear there can
  be taken to be independent of $\varepsilon$.  First, we give the
  details concerning the proof of Lemma~\ref{lemma-simple}.

  Instead of~\eqref{eq:wsing}, we require the inequality
  \begin{equation}
    \label{eq:lemma-13-*}
    \left|
      \frac{(\varphi_k^{\varepsilon})'(\zeta)}{\varphi_k^\varepsilon(
        \zeta)-\varphi_k(z)}
      -
      \frac{1}{\zeta-z}
    \right| \leq C|\zeta - z|^{\alpha-1},\qquad \zeta \in J_k,\
    z \in \overline{\Omega}_k\setminus\{\zeta\},\ 0\leq\varepsilon\leq
    \varepsilon_0.
  \end{equation}
  Here $C$ should be a constant independent of $\varepsilon$, so we
  cannot simply apply Lemma~\ref{weak-singularity}.  To prove this
  inequality, apply Lemma~\ref{weak-singularity} to $\varphi_k^0$ and
  get~\eqref{eq:lemma-13-*} for $\varepsilon = 0$, and then use
  Lemma~\ref{lemma-epsilon} to obtain
  \begin{equation*}
    \left|
      \frac{(\varphi_k^{\varepsilon})'(\zeta)}{\varphi_k^{\varepsilon}(
        \zeta) - \varphi_k^{\varepsilon}(z)} -
      \frac{(\varphi_k^0)'(\zeta)}{\varphi_k^0(\zeta) - \varphi_k^0(z)}
    \right| \leq C|\zeta - z|^{\alpha-1}, \quad \zeta \in J_k,\ z \in
    \overline{\Omega}_k\setminus\{\zeta\},\
    0\leq\varepsilon\leq\varepsilon_0,
  \end{equation*}
  where $C$ is independent of $\varepsilon$.
  Then~\eqref{eq:lemma-13-*} follows from the triangle inequality.

  Let $\widehat{J}_k$ and $\psi$ be as in the statement of
  Lemma~\ref{lemma-simple}.  We verify that $\|\CauchyMod {\psi}
  {\varphi_k^\varepsilon} {\widehat{J}_k}\|_\infty \leq C$, with $C$
  independent of $\varepsilon$.  By the proof of
  Lemma~\ref{lemma-simple}, $\varphi_k^\varepsilon(\Omega_k)$ is an
  open set containing $\varphi_k^\varepsilon(\widehat{J}_k)$.
  Moreover, it follows from the continuity of $\varphi_k^\varepsilon$
  in $\varepsilon$ and the compactness of the interval
  $[0,\varepsilon_0]$ that the distance from
  $\varphi_k^\varepsilon(\widehat{J}_k)$ to the boundary of
  $\varphi_k^\varepsilon(\Omega_k)$ is bounded below by a positive
  constant independent of $\varepsilon$.  Therefore, it suffices to
  show that $\CauchyMod {\psi} {\varphi_k^\varepsilon}
  {\widehat{J}_k}$ is bounded in $\varphi_k^\varepsilon(\Omega_k)
  \setminus \varphi_k^\varepsilon(\widehat{J}_k)$ by a constant
  independent of $\varepsilon$, because when the distance from some
  point $z$ to $\varphi_k^\varepsilon(\widehat{J}_k)$ is greater than
  a constant, $\CauchyMod {\psi} {\varphi_k^\varepsilon}
  {\widehat{J}_k} (z)$ is readily bounded by a constant independent of
  $z$ and $\varepsilon$.

  To show that $\CauchyMod {\psi} {\varphi_k^\varepsilon}
  {\widehat{J}_k}$ is bounded in $\varphi_k^\varepsilon(\Omega_k)
  \setminus \varphi_k^\varepsilon(\widehat{J}_k)$, we prove as in
  Lemma~\ref{lemma-simple} that $\CauchyMod {\psi}
  {\varphi_k^\varepsilon}{\widehat{J}_k}\circ\varphi_k^\varepsilon$ is
  bounded in $\Omega_k\setminus\widehat{J}_k$.
  Write~\eqref{eq:lemma-simple-*} for $\varphi_k^\varepsilon$ instead
  of $\varphi_k$ and $\widehat{J}_k$ instead of $\Gamma$, and then
  use~\eqref{eq:lemma-13-*} to obtain
  \begin{equation*}
    \left|\CauchyMod
      {\psi}{\varphi_k^\varepsilon}{\widehat{J}_k}(\varphi_k(z)) -
      \Cauchy {\psi}{\widehat{J}_k}(z) \right| \leq C\|\psi\|_\infty
    \int_{\widehat{J}_k} |\zeta - z|^{\alpha-1}\,d\zeta \leq
    C\|\psi\|_\infty,
  \end{equation*}
  where $C$ is independent of $\varepsilon$.  Since $\Cauchy
  {\psi}{\widehat{J}_k} \in H^\infty(\C\setminus\widehat{J}_k)$, we
  get the required bound.

  It remains to check that the $H^\infty$ norms in Claim~\ref{claim1}
  (see the proof of Theorem~\ref{compact-operator}) can be bounded by
  a constant independent of $\varepsilon$.  We can apply methods
  similar to the ones that we have used for Lemma~\ref{lemma-simple}.
  Define $(G_k^-)^\varepsilon$ as in~\eqref{eq:gk}, replacing
  $\varphi_k$ with $\varphi_k^\varepsilon$.  Put $g_k^\varepsilon =
  (G_k^-)^\varepsilon(f)$.  This is in
  $H^\infty(\C\setminus\varphi_{k+1}^\varepsilon(J_{k+1}^-\cup
  J_k^R))$ by Claim~\ref{claim1}.  We want to show that
  $g_k^\varepsilon$ is bounded by a constant independent of
  $\varepsilon$.

  Define $h_k^+$ and $h_k^-$ as in the proof of Claim~\ref{claim1}
  (these functions do not depend on $\varphi_k$).  Compute
  \begin{equation*}
    \begin{split}
      g_k^\varepsilon \circ \varphi_{k+1}^\varepsilon &- (h_k^- +
      h_k^+ \circ
      R_k^{-1}) =\\
      & [\CauchyMod {\nu_k f} {\varphi_{k+1}^\varepsilon} {J_{k+1}^-}
      \circ \varphi_{k+1}^\varepsilon - \Cauchy {\nu_k f} {J_{k+1}^-}]
      + [ \CauchyMod {(\nu_kf)\circ R_k^{-1}}
      {\varphi_{k+1}^\varepsilon} {J_k^R} \circ
      \varphi_{k+1}^\varepsilon - \Cauchy {(\nu_kf)\circ R_k^{-1}}
      {J_k^R} ].
    \end{split}
  \end{equation*}
  Arguing as before and using~\eqref{eq:lemma-13-*}, each of the two
  terms in brackets is bounded by a constant independent of $\varphi$.
  Since $h_k^- + h_k^+\circ R_k^{-1} \in
  H^\infty(\C\setminus(J_k^-\cup J_k^R))$, $g_k^\varepsilon \circ
  \varphi_{k+1}^\varepsilon$ is uniformly bounded in
  $S_{k+1}^-\setminus(J_{k+1}^- \cup J_k^R)$, and so $g_k^\varepsilon$
  is uniformly bounded in
  $\varphi_{k+1}^\varepsilon(S_{k+1}^-\setminus(J_{k+1}^-\cup
  J_k^R))$.

  Now choose open circular sectors $S_\varepsilon$ and
  $S_\varepsilon'$ with vertex on $w_k^\varepsilon =
  \varphi_{k+1}^\varepsilon(z_k)$ such that
  $\overline{S_\varepsilon}\cap \overline{S_\varepsilon'} =
  \{w_k^\varepsilon\}$, and satisfying the following conditions (see
  Figure~\ref{fig:2} in the proof of Theorem~\ref{compact-operator}):
  \begin{itemize}
  \item $\varphi_{k+1}^\varepsilon(J_{k+1}^- \cup J_k^R) \subset
    S_\varepsilon' \cup \{w_k^\varepsilon\}$.
  \item $\D_{\varepsilon_0}(w_k^\varepsilon)\setminus
    \varphi_{k+1}^\varepsilon(S_{k+1}^-) \subset S_\varepsilon \cup
    \{w_k^\varepsilon\}$, for some $\varepsilon_0 > 0$.
  \item The straight edges of $S_\varepsilon$ are contained in
    $\varphi_{k+1}^\varepsilon(S_{k+1}^-)\cup \{w_k^\varepsilon\}$.
  \end{itemize}
  This can be done by the continuity of $\varphi_{k+1}^\varepsilon$ in
  $\varepsilon$ and by shrinking $V_k$ if necessary.

  To show that $g_k^\varepsilon$ is bounded in $S_\varepsilon$, we use
  the Phragm\'en-Lindel\"of principle as in the proof of
  Claim~\ref{claim1}.  There, we proved that $g_k^\varepsilon$ is
  $O(|z-w_k^\varepsilon|^{-1})$ as $z\to w_k^\varepsilon$.  Thus,
  $g_k^\varepsilon$ is bounded in $S_\varepsilon$ by the supremum of
  $|g_k^\varepsilon|$ on the straight edges of $S_\varepsilon$.  Since
  these straight edges are contained in
  $\varphi_{k+1}^\varepsilon(S_{k+1})$ and we had a bound for
  $\varphi_k^\varepsilon$ which is uniform in $\varepsilon$ on this
  set, there is a bound on $S_\varepsilon$ which is also uniform in
  $\varepsilon$.

  Clearly, $g_k^\varepsilon(z)$ is uniformly bounded in $\varepsilon$
  and $z$ when the distance from $z$ to
  $\varphi_{k+1}^\varepsilon(J_{k+1}^- \cup J_k^R)$ is greater than a
  positive constant.  Also, $g_k^\varepsilon$ is uniformly bounded on
  $U_\varepsilon \setminus \varphi_{k+1}^\varepsilon(J_{k+1}^- \cup
  J_k^R)$, where $U_\varepsilon$ is some open set containing
  $\varphi_{k+1}^\varepsilon(J_{k+1}^- \cup J_k^R)$ and such that the
  distance from $\partial U_\varepsilon$ to
  $\varphi_{k+1}^\varepsilon(J_{k+1}^- \cup J_k^R)$ is bounded below
  by a positive constant independent of $\varepsilon$.  This finishes
  the proof.
\end{proof}

\begin{bibdiv}
  \begin{biblist}
    \bib{Adachi}{article}{ author={Adachi, Kenzo}, title={Extending
        holomorphic functions from subvarieties}, conference={
        title={Analytic extension formulas and their applications},
        address={Fukuoka, 1999/Kyoto}, date={2000}, }, book={
        series={Int. Soc. Anal. Appl. Comput.}, volume={9},
        publisher={Kluwer Acad. Publ., Dordrecht}, }, date={2001},
      pages={1--14},
    }

    \bib{AdachiEtl}{article}{ author={Adachi, Kenz{\=o}},
      author={Andersson, Mats}, author={Cho, Hong Rae}, title={$L^p$
        and $H^p$ extensions of holomorphic functions from
        subvarieties of analytic polyhedra}, journal={Pacific J.
        Math.}, volume={189}, date={1999}, number={2},
      pages={201--210}, issn={0030-8730},
    }

    \bib{AglerMcCarthyNorm}{article}{ author={Agler, Jim},
      author={McCarthy, John E.}, title={Norm preserving extensions of
        holomorphic functions from subvarieties of the bidisk},
      journal={Ann. of Math. (2)}, volume={157}, date={2003},
      number={1}, pages={289--312}, issn={0003-486X},
    }

    \bib{AglerMcCarthyHyp}{article}{ author={Agler, Jim},
      author={McCarthy, John E.}, title={Hyperbolic algebraic and
        analytic curves}, journal={Indiana Univ. Math. J.},
      volume={56}, date={2007}, number={6}, pages={2899--2933},
    }

    \bib{AlexandreMazzilli}{article}{ author={Alexandre, William},
      author={Mazzilli, Emmanuel}, title={Extension of holomorphic
        functions defined on singular complex hypersurfaces with
        growth estimates}, journal={Ann. Sc. Norm. Super. Pisa Cl.
        Sci. (5)}, volume={14}, date={2015}, number={1},
      pages={293--330},
    }

    \bib{Alexander69}{article}{ author={Alexander, Herbert},
      title={Extending bounded holomorphic functions from certain
        subvarieties of a polydisc}, journal={Pacific J. Math.},
      volume={29}, date={1969}, pages={485--490},
    }

    \bib{AlpayPutinarVinnikov}{article}{ author={Alpay, Daniel},
      author={Putinar, Mihai}, author={Vinnikov, Victor}, title={A
        Hilbert space approach to bounded analytic extension in the
        ball}, journal={Commun. Pure Appl. Anal.}, volume={2},
      date={2003}, number={2}, pages={139--145}, issn={1534-0392},
    }

    \bib{AmarCharpentier}{article}{ author={Amar, {\'E}ric},
      author={Charpentier, Philippe}, title={Extensions dans les
        classes de Hardy de fonctions holomorphes d\'efinies sur une
        sous-vari\'et\'e du bidisque}, language={French, with English
        summary}, journal={Bull. Sci. Math. (2)}, volume={104},
      date={1980}, number={2}, pages={145--175},
    }

    \bib{Bishop}{article}{ author={Bishop, Errett}, title={Subalgebras
        of functions on a Riemann surface}, journal={Pacific J.
        Math.}, volume={8}, date={1958}, pages={29--50},
    }

    \bib{Blumenthal}{article}{ author={Blumenthal, Robert George},
      title={Holomorphically closed algebras of analytic functions},
      journal={Math. Scand.}, volume={34}, date={1974},

      pages={84--90},
    }

    \bib{Chee76}{article}{ author={Chee, P. S.}, title={Zero sets and
        extensions of bounded holomorphic functions in polydiscs},
      journal={Proc. Amer. Math. Soc.}, volume={60}, date={1976},
      pages={109--115 (1977)},
    }

    \bib{Chee83}{article}{ author={Chee, P. S.}, title={On the
        extension of $H^{p}$-functions in polydiscs}, journal={Proc.
        Amer. Math. Soc.}, volume={88}, date={1983}, number={2},
      pages={270--274},
    }

    \bib{Chee87}{article}{ author={Chee, P. S.}, title={On the
        extension of $(LH)^p$-functions in polydiscs},
      language={English, with Italian summary}, journal={Boll. Un.
        Mat. Ital. A (7)}, volume={1}, date={1987}, number={1},
      pages={77--85},
    }

    \bib{CohenSeidenberg}{article}{ author={Cohen, I. S.},
      author={Seidenberg, A.}, title={Prime ideals and integral
        dependence}, journal={Bull. Amer. Math. Soc.}, volume={52},
      date={1946}, pages={252--261},
    }

    \bib{DavidsonHartzSh2015}{article}{ author={Davidson, Kenneth R.},
      author={Hartz, Michael}, author={Shalit, Orr Moshe},
      title={Multipliers of embedded discs}, journal={Complex Anal.
        Oper. Theory}, volume={9}, date={2015}, number={2},
      pages={287--321},
      note={(Erratum to: Multipliers of embedded discs. Complex Anal.
        Oper. Theory 9 (2015), no. 2, 323--327.)}  }

    \bib{DiedMazz97}{article}{ author={Diederich, K.},
      author={Mazzilli, E.}, title={Extension and restriction of
        holomorphic functions}, language={English, with English and
        French summaries}, journal={Ann. Inst. Fourier (Grenoble)},
      volume={47}, date={1997}, number={4}, pages={1079--1099},
    }

    \bib{DiedMazz01}{article}{ author={Diederich, Klas},
      author={Mazzilli, Emmanuel}, title={Extension of bounded
        holomorphic functions in convex domains}, journal={Manuscripta
        Math.}, volume={105}, date={2001}, number={1}, pages={1--12},
    }

    \bib{Article2}{article}{ author={Dritschel, Michael A.},
      author={Est\'evez, Daniel}, author={Yakubovich, Dmitry V.},
      title={Tests for complete $K$-spectral sets},
      eprint={http://arxiv.org/abs/1510.08350}, }

    \bib{DritschelMcCullough}{article}{ author={Dritschel, Michael
        A.}, author={McCullough, Scott}, title={Test functions,
        kernels, realizations and interpolation}, conference={
        title={Operator theory, structured matrices, and dilations},
      }, book={ series={Theta Ser. Adv. Math.}, volume={7},
        publisher={Theta, Bucharest}, }, date={2007},
      pages={153--179},
    }

    \bib{Gajov}{book}{ author={Gakhov, F. D.}, title={Boundary value
        problems}, note={Translated from the Russian; Reprint of the
        1966 translation}, publisher={Dover Publications, Inc., New
        York}, date={1990}, pages={xxii+561},
    }

    \bib{Gamelin}{article}{ author={Gamelin, T. W.}, title={Embedding
        Riemann surfaces in maximal ideal spaces}, journal={J.
        Functional Analysis}, volume={2}, date={1968},
      pages={123--146},
    }

    \bib{Gorin}{article}{ author={Gorin, E. A.}, title={Subalgebras of
        finite codimension}, language={Russian}, journal={Mat.
        Zametki}, volume={6}, date={1969}, pages={321--328},
      translation={journal={Math. notes}, date={1969}, volume={6},
        pages={649--652}},
    }

    \bib{GuoHuangWang2008}{article}{ author={Guo, Kunyu},
      author={Huang, Hansong}, author={Wang, Kai}, title={Retracts in
        polydisk and analytic varieties with the $H^\infty$-extension
        property}, journal={J. Geom. Anal.}, volume={18}, date={2008},
      number={1}, pages={148--171},
    }

    \bib{Khavin}{article}{ author={Havin, V. P.}, title={Separation of
        singularities of analytic functions with preservation of
        boundedness}, language={Russian, with Russian summary},
      journal={Algebra i Analiz}, volume={16}, date={2004},
      number={1}, pages={293--319}, issn={0234-0852}, translation={
        journal={St. Petersburg Math. J.}, volume={16}, date={2005},
        number={1}, pages={259--283},
      },
    }

    \bib{HavinNersessian}{article}{ author={Havin, V. P.},
      author={Nersessian, A. H.}, title={Bounded separation of
        singularities of analytic functions}, conference={
        title={Entire functions in modern analysis},
        address={Tel-Aviv}, date={1997}, }, book={ series={Israel
          Math. Conf. Proc.}, volume={15}, publisher={Bar-Ilan Univ.},
        place={Ramat Gan}, }, date={2001}, pages={149--171},
    }

    \bib{HavinNersCerda}{article}{ author={Havin, V. P.},
      author={Nersessian, A. H.}, author={Ortega-Cerd{\`a}, J.},
      title={Uniform estimates in the Poincar\'e-Aronszajn theorem on
        the separation of singularities of analytic functions},
      journal={J. Anal. Math.}, volume={101}, date={2007},
      pages={65--93},
    }

    \bib{ProblemBook}{book}{ title={Linear and complex analysis.
        Problem book 3. Part I}, series={Lecture Notes in
        Mathematics}, volume={1573}, editor={Havin, V. P.},
      editor={Nikolski, N. K.}, publisher={Springer-Verlag, Berlin},
      date={1994}, pages={xxii+489},
    }

    \bib{HenkinLeiterer}{book}{ author={Henkin, Gennadi},
      author={Leiterer, J{\"u}rgen}, title={Theory of functions on
        complex manifolds}, series={Monographs in Mathematics},
      volume={79}, publisher={Birkh\"auser Verlag, Basel},
      date={1984}, pages={226},
    }

    \bib{Hoffman}{book}{ author={Hoffman, Kenneth}, title={Banach
        spaces of analytic functions}, series={Prentice-Hall Series in
        Modern Analysis}, publisher={Prentice-Hall Inc.},
      place={Englewood Cliffs, N. J.}, date={1962}, pages={xiii+217},
    }

    \bib{KerrMcCarthyShalit}{article}{ author={Kerr, Matt},
      author={McCarthy, John E.}, author={Shalit, Orr Moshe},
      title={On the isomorphism question for complete Pick multiplier
        algebras}, journal={Integral Equations Operator Theory},
      volume={76}, date={2013}, number={1}, pages={39--53},
    }

    \bib{Knese2010}{article}{ author={Knese, Greg}, title={Polynomials
        defining distinguished varieties}, journal={Trans. Amer. Math.
        Soc.}, volume={362}, date={2010}, number={11},
      pages={5635--5655},
    }

    \bib{Kress}{book}{ author={Kress, Rainer}, title={Linear integral
        equations}, series={Applied Mathematical Sciences},
      volume={82}, edition={3}, publisher={Springer, New York},
      date={2014}, pages={xvi+412},
    }

    \bib{MathesonStessin}{article}{ author={Matheson, Alec L.},
      author={Stessin, Michael I.}, title={Cauchy transforms of
        characteristic functions and algebras generated by inner
        functions}, journal={Proc. Amer. Math. Soc.}, volume={133},
      date={2005}, number={11}, pages={3361--3370},
    }

    \bib{Mazzilli}{article}{ author={Mazzilli, Emmanuel}, title={Un
        exemple d'obstruction g\'eom\'etrique \`a l'extension des
        fonctions holomorphes born\'ees}, language={French},
      conference={ title={Complex analysis and geometry},
        address={Paris}, date={1997}, }, book={ series={Progr. Math.},
        volume={188}, publisher={Birkh\"auser, Basel}, }, date={2000},
      pages={193--201},
    }

    \bib{Polyakov}{article}{ author={Polyakov, P. L.},
      title={Continuation of bounded holomorphic functions from an
        analytic curve in general position into the polydisc},
      language={Russian}, journal={Funktsional. Anal. i Prilozhen.},
      volume={17}, date={1983}, number={3}, pages={87--88},
    }

    \bib{PolyakovKhenkin}{article}{ author={Polyakov, P. L.},
      author={Khenkin, G. M.}, title={Integral formulas for solution
        of the $\partial$-equation, and interpolation problems in
        analytic polyhedra}, language={Russian}, journal={Trudy
        Moskov. Mat. Obshch.}, volume={53}, date={1990},
      pages={130--170, 260--261}, issn={0134-8663}, translation={
        journal={Trans. Moscow Math. Soc.}, date={1991},
        pages={135--175},}
    }

    \bib{SibonyWermer}{article}{ author={Sibony, N.}, author={Wermer,
        J.}, title={Generators for $A(\Omega )$}, journal={Trans.
        Amer. Math. Soc.}, volume={194}, date={1974},

      pages={103--114},
    }

    \bib{StessinThomas}{article}{ author={Stessin, Michael I.},
      author={Thomas, Pascal J.}, title={Algebras generated by two
        bounded holomorphic functions}, journal={J. Anal. Math.},
      volume={90}, date={2003}, pages={89--114},
    }

    \bib{Stout}{article}{ author={Stout, Edgar Lee}, title={Bounded
        extensions. The case of discs in polydiscs}, journal={J.
        Analyse Math.}, volume={28}, date={1975}, pages={239--254},
    }

    \bib{Wermer}{article}{ author={Wermer, John}, title={Rings of
        analytic functions}, journal={Ann. of Math. (2)}, volume={67},
      date={1958}, pages={497--516},
    }

  \end{biblist}
\end{bibdiv}

\affiliationone{
  Michael A. Dritschel\\
  School of Mathematics {\&} Statistics\\
  Newcastle University\\
  Newcastle upon Tyne\\
  NE1 7RU\\
  UK\\
  \email{michael.dritschel@newcastle.ac.uk}} \affiliationtwo{
  Daniel Est\'evez\\
  Departamento de Matem\'{a}ticas\\
  Universidad Aut\'onoma de Madrid\\ Cantoblanco 28049 (Madrid)\\
  Spain\\ \email{daniel.estevez@uam.es}} \affiliationthree{
  Dmitry Yakubovich\\
  Departamento de Matem\'{a}ticas\\
  Universidad Aut\'onoma de Madrid\\ Cantoblanco 28049 (Madrid)\\ Spain\\
  and Instituto de Ciencias Matem\'{a}ticas (CSIC - UAM - UC3M - UCM)\\
  \email{dmitry.yakubovich@uam.es}}

\end{document}